\definecolor{mediumblue}{rgb}{0.0, 0.0, 0.8}
\colorlet{darkgreen}{green!50!black}
\renewcommand*{\backref}[1]{}
\renewcommand*{\backrefalt}[4]{%
 \ifcase #1 No citations.
 \or [Page #2.]
 \else [Pages #2.]
 \fi%
}
\crefname{defn}{Definition}{Definitions}
\crefname{thm}{Theorem}{Theorems}
\crefname{prop}{Proposition}{Propositions}
\crefname{lem}{Lemma}{Lemmas}
\crefname{cor}{Corollary}{Corollaries}
\crefname{conj}{Conjecture}{Conjectures}
\crefname{section}{Section}{Sections}
\crefname{subsection}{Subsection}{Subsections}
\crefname{eg}{Example}{Examples}
\crefname{figure}{Figure}{Figures}
\crefname{rem}{Remark}{Remarks}
\crefname{rmk}{Remark}{Remarks}
\crefname{equation}{equation}{equations}
\Crefname{defn}{Definition}{Definitions}
\Crefname{thm}{Theorem}{Theorems}
\Crefname{prop}{Proposition}{Propositions}
\Crefname{lem}{Lemma}{Lemmas}
\Crefname{cor}{Corollary}{Corollaries}
\Crefname{conj}{Conjecture}{Conjectures}
\Crefname{section}{Section}{Sections}
\Crefname{subsection}{Subsection}{Subsections}
\Crefname{eg}{Example}{Examples}
\Crefname{figure}{Figure}{Figures}
\Crefname{rem}{Remark}{Remarks}
\Crefname{rmk}{Remark}{Remarks}
\renewcommand{\geq}{\geqslant}
\renewcommand{\leq}{\leqslant}
\renewcommand{\trianglerighteq}{\trianglerighteqslant}
\tikzset{wei/.style=
{red,double=red,double
distance=0.5pt}}
\tikzset{wei2/.style={red,double=red,double
distance=0.5pt}}
\numberwithin{equation}{section}
\newtheorem*{Main}{Main Theorem}
\newtheorem*{Main2}{Corollary}
\newtheorem{thm}{Theorem}[section]
\newtheorem{cor}[thm]{Corollary}
\newtheorem{lem}[thm]{Lemma}
\newtheorem{prop}[thm]{Proposition}
\newtheorem*{prop*}{Proposition}
\newtheorem*{thm*}{Theorem}
\newtheorem*{cor*}{Corollary}
\newtheorem*{conj*}{Conjecture}
\newtheorem*{move1'}{Move $\mathbf{1^*}$}
\theoremstyle{remark}
\newtheorem{rmk}[thm]{Remark}
\newtheorem{rem}[thm]{Remark}
\newtheorem*{ack}{Acknowledgements}
\theoremstyle{definition}
\newtheorem{defn}[thm]{Definition}
\newtheorem{eg}[thm]{Example}
\crefname{defn}{Definition}{Definitions}
\crefname{thm}{Theorem}{Theorems}
\crefname{prop}{Proposition}{Propositions}
\crefname{lem}{Lemma}{Lemmas}
\crefname{cor}{Corollary}{Corollaries}
\crefname{conj}{Conjecture}{Conjectures}
\crefname{section}{Section}{Sections}
\crefname{subsection}{Subsection}{Subsections}
\crefname{eg}{Example}{Examples}
\crefname{figure}{Figure}{Figures}
\crefname{rem}{Remark}{Remarks}
\crefname{rmk}{Remark}{Remarks}
\Crefname{defn}{Definition}{Definitions}
\Crefname{thm}{Theorem}{Theorems}
\Crefname{prop}{Proposition}{Propositions}
\Crefname{lem}{Lemma}{Lemmas}
\Crefname{cor}{Corollary}{Corollaries}
\Crefname{conj}{Conjecture}{Conjectures}
\Crefname{section}{Section}{Sections}
\Crefname{subsection}{Subsection}{Subsections}
\Crefname{eg}{Example}{Examples}
\Crefname{figure}{Figure}{Figures}
\Crefname{rem}{Remark}{Remarks}
\Crefname{rmk}{Remark}{Remarks}
\newcommand{\degr}{\mathrm{deg}}
\newcommand{\SStd}{\operatorname{SStd}}
\newcommand{\TSStd}{\operatorname{\mathcal{T}}}
 \newcommand{\SSTS}{\mathsf{S}}  
\newcommand{\SSTT}{\mathsf{T}}  
\newcommand{\SSTU}{\mathsf{U}}  
\newcommand{\SSTV}{\mathsf{V}}  
\newcommand{\SSTQ}{\mathsf{Q}}  
\newcommand{\SSTR}{\mathsf{R}}  
\newcommand{\ZZ}{{\mathbb Z}}
\newcommand{\g}{\ell}
\newcommand{\CC}{{\mathbb C}}
\newcommand{\RR}{{\mathbb R}}
\newcommand{\la}{{\lambda}}
\newcommand\mptn[2]{\mathscr{P}^{#1}_{#2}}
\newcommand\bbz{\mathbb{Z}}
\newcommand{\doms}{\vartriangleright}
\newcommand{\ndom}{\ntrianglerighteqslant}
\newcommand{\domby}{\trianglelefteqslant}
\DeclareMathOperator{\Ext}{Ext} 
\DeclareMathOperator{\Hom}{Hom}
\newcommand\Dec[1][A]{\mathbf{D}_{#1}(t)}
\tikzset{
    ultra thin/.style= {line width=0.05pt},
    very thin/.style=  {line width=0.2pt},
    thin/.style=       {line width=0.1pt},
    semithick/.style=  {line width=0.5pt},
    thick/.style=      {line width=0.8pt},
    very thick/.style= {line width=1.2pt},
    ultra thick/.style={line width=1.6pt}
}
\newcommand\Dim[2][t]{\text{\rm Dim}_{#1}#2}
\def\Item{\item\abovedisplayskip=0pt\abovedisplayshortskip=5pt~\vspace*{-\baselineskip}}
\begin{document}

\title[Row removal for diagrammatic Cherednik algebras] {An analogue of row removal for \\ diagrammatic Cherednik algebras}

\author[C.~Bowman]{C.~Bowman}
\email{C.D.Bowman@kent.ac.uk}
\address{
University of Kent, Canterbury
CT2 7NF, UK}

\author{L.~Speyer}
\email{l.speyer@virginia.edu}
\address{University of Virginia,
Charlottesville, VA
22904,
USA}

\begin{abstract}
We prove an analogue of James--Donkin row removal theorems for   diagrammatic Cherednik algebras.
This is one of the first results concerning the (graded) decomposition numbers of these algebras over fields of arbitrary characteristic.
As a special case, our results yield a new reduction theorem for graded decomposition numbers and extension groups for cyclotomic $q$-Schur algebras.
\end{abstract}

\maketitle

\!\!\!\!\!\!\!\!\!\!\!\!\!\!\!\!
\section{Introduction}

Cyclotomic quiver Hecke algebras and (diagrammatic) Cherednik algebras are of central interest in Khovanov homology, knot theory, group theory, and higher representation theory.
Over the complex numbers, these algebras has been extensively studied by the great and the good of (geometric) representation theory \cite{losev, RSVV, Webster}.

Over fields of positive characteristic, the waters become muddier.
The quiver Hecke algebras and diagrammatic Cherednik algebras continue to be intimately related through Schur--Weyl duality \cite{Webster} and to be of fundamental interest to representation theorists \cite{bkisom,bk09,m14surv,bcs15,bs15,el17} (whereas the Cherednik algebras themselves diverge from this picture and become less mainstream).
In the case of the symmetric groups, our understanding of the quiver Hecke and diagrammatic Cherednik algebras has recently ballooned thanks to geometric insights of Williamson~\cite{w17} and others.
In this paper and~\cite{bs15}, the authors initiate the uniform study of all diagrammatic Cherednik algebras over arbitrary fields.

While the (quiver) Hecke algebras should be familiar to many readers (having enjoyed much direct study since the 1980s), the diagrammatic Cherednik algebras might appear  more mysterious and newfangled.
Associated to each cyclotomic quiver Hecke algebra $H_n(\kappa)$, we have a family of diagrammatic Cherednik algebras $A(n,\theta,\kappa)$ for $\theta\in \ZZ^\ell$, each of which gives us a different $\theta$-lens through which to view $H_n(\kappa)$.
These various $\theta$-lenses correspond to different parameterisations of simple modules under Ariki's categorification theorem (over $\CC$) and to varying the Lusztig $a$-function on $H_n(\kappa)$.
These $\theta$-lenses provide many different graded filtrations of projective $H_n(\kappa)$-modules and each $H_n(\kappa)$ admits many different  graded decomposition matrices (which all specialise to the same ordinary decomposition matrix by setting $t\to1$).
In~\cite{Webster}, Webster showed that each $A(n,\theta,\kappa)$ categorifies a $\theta$-twisted higher level Fock space (thus vastly generalising~\cite{kk12,WebsterKnots}).
It is the desire to fully understand these different $\theta$-structures on $H_n(\kappa)$ which has continued to inspire the work of Bonnaf\'e, Chlouveraki, Geck, Gordon, Griffeth, Jacon, and Rouquier, amongst many others over the past twenty years~\cite{BONNAFEROUQ,BONNAFEROUQ2,cgg12,cj12,Geck98,gj11,gr01,Jacon05}.
The diagrammatic Cherednik algebras provide us with a vast new array of tools with which to study the modular representation theory of $H_n(\kappa)$ through its various $\theta$-structures.

To each weighting $\theta \in \ZZ^\ell$  Webster introduces a graphical calculus for the set of   $\ell$-multipartitions based on an embedding of these multipartitions into $\RR^2$.
We define {\sf diagonal cuts} on these multipartitions in a graphical fashion; these cuts allow us to identify a pair of multipartitions $(\la, \mu)$ with two distinct pairs of multipartitions $(\la^L,\mu^L)$ and $(\la^R,\mu^R)$ (the left and right pieces of the cut) of smaller degree.
We thus reduce the problem of calculating the graded decomposition numbers of these algebras as follows.

\begin{Main} 
Let $R$ be an arbitrary field.  
Let $(\la,\mu)$ be a pair of $\ell$-multipartitions of $n$ and let $a \in \RR$.
If $(\la,\mu)$ admits a $\theta$-diagonal cut at $x=a$ into two pieces $(\la^L,\mu^L)$ and $(\la^R,\mu^R)$, then we can factorise the graded decomposition numbers for these algebras as
\[
d^{A(n,\theta,\kappa)}_{\la \mu}(t)= d^{A(n_L,\theta,\kappa)}_{\la^L \mu^L}(t) \times d^{A(n_R,\theta,\kappa)}_{\la^R \mu^R}(t)
\]
and the (graded) higher extension groups as 
\[
\Ext^k_{A(n,\theta,\kappa)} (\Delta(\la), \Delta(\mu)) \cong 
\bigoplus_{i+j=k}
\Ext^i_{A(n_L,\theta,\kappa)} (\Delta(\la^L), \Delta(\mu^L)) \otimes 
\Ext^j_{A(n_R,\theta,\kappa)} (\Delta(\la^R), \Delta(\mu^R)),
\]
\[
\Ext^k_{A(n,\theta,\kappa)} (\Delta(\la), L(\mu)) \cong 
\bigoplus_{i+j=k}
\Ext^i_{A(n_L,\theta,\kappa)} (\Delta(\la^L), L(\mu^L)) \otimes 
\Ext^j_{A(n_R,\theta,\kappa)} (\Delta(\la^R), L(\mu^R)),
\]
where $n_L=|\la^L|=|\mu^L|$ and $n_R=|\la^R|=|\mu^R|$.
\end{Main}

In the level $\ell=1$ case, the algebra $A(n,\theta,\kappa)$ is Morita equivalent to the classical $q$-Schur algebra.
In this case, one can make a diagonal cut across a pair of partitions if and only if one can make a {\sf vertical} (generalised column) cut, if and only if one can make a {\sf horizontal} (generalised row) cut.
These vertical and horizontal cuts across pairs of partitions have already been extensively studied; the analogue of the above result for (graded) decomposition numbers was treated in~\cite{j81, donkinnote,cmt02} and for $\Ext$ groups in~\cite{fl03,lm05,donkinhandbook}.

In higher levels, the (graphically defined) dominance orders on multipartitions are more exotic; our diagonal cuts may pass through many components of the multipartitions at once, as illustrated shortly.
Our diagonal cuts provide new information even in the case of the cyclotomic \mbox{$q$-Schur} algebras of Dipper, James and Mathas \cite{djm} (which are Morita equivalent to the diagrammatic Cherednik algebras with {\sf well-separated} weightings); see \cref{cylotomic}.

Roughly speaking, a pair of multipartitions $\la$, $\mu$ admits a diagonal cut at $x=a$ if when we draw the line $x = a$ on the Young diagrams for $\la$ and $\mu$ , we have the same number of boxes to the left of the line in $\la$ as in $\mu$, and likewise to the right of the line.
This concept is introduced more concretely in \cref{cutdef}.
In order to clarify the above, let's consider an example.
For $\theta= (0,1)$, the bipartitions
\[
\la = ((11,9,7,3^2,2,1^3),(9,4,2,1^4)) \quad \text{and} \quad \mu = ((10,9,8,4,3,1^5),(8,4,2,1^4))
\]
admit a {\sf $\theta$-diagonal cut} at $x = 5.2$ (note that we draw boxes with diagonals of length $2\ell=4$).  
To see this, we draw the bipartitions with respect to this weighting as in \cref{level1}.

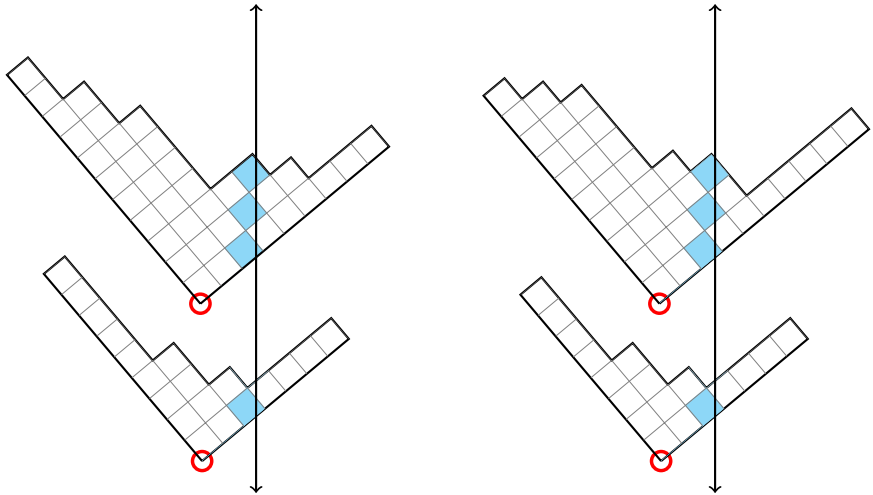
\begin{figure}[h]
\begin{center}\scalefont{0.6}
\begin{tikzpicture}[scale=1.8]
\begin{scope}
{\path (0,0) coordinate (origin);
\draw[wei2] (0,0)   circle (2pt);
  \draw[fill,cyan!40] (origin) 
   --++(40:2*0.2)
   --++(130:1*0.2)
    --++(40:1*0.2)
   --++(130:1*0.2)
    --++(40:1*0.2)
 --++(130:1*0.2)
    --++(40:1*0.2)
--++(-50:1*0.2)
    --++(220:1*0.2)--++(-50:1*0.2)
    --++(220:1*0.2)--++(-50:1*0.2)
    --++(220:1*0.2);

  \draw[thick] (origin) 
   --++(130:11*0.2)
   --++(40:1*0.2)
  --++(-50:2*0.2)	
  --++(40:1*0.2)
  --++(-50:2*0.2)
  --++(40:1*0.2)
  --++(-50:4*0.2)
  --++(40:2*0.2)
  --++(-50:1*0.2)
  --++(40:1*0.2)
  --++(-50:1*0.2)
  --++(40:3*0.2)
  --++(-50:1*0.2)
    --++(220:9*0.2);
     \clip (origin) 
     --++(130:11*0.2)
   --++(40:1*0.2)
  --++(-50:2*0.2)	
  --++(40:1*0.2)
  --++(-50:2*0.2)
  --++(40:1*0.2)
  --++(-50:4*0.2)
  --++(40:2*0.2)
  --++(-50:1*0.2)
  --++(40:1*0.2)
  --++(-50:1*0.2)
  --++(40:3*0.2)
  --++(-50:1*0.2)
    --++(220:9*0.2);
    \path (40:1cm) coordinate (A1);
  \path (40:2cm) coordinate (A2);
  \path (40:3cm) coordinate (A3);
  \path (40:4cm) coordinate (A4);
  \path (130:1cm) coordinate (B1);
  \path (130:2cm) coordinate (B2);
  \path (130:3cm) coordinate (B3);
  \path (130:4cm) coordinate (B4);
  \path (A1) ++(130:3cm) coordinate (C1);
  \path (A2) ++(130:2cm) coordinate (C2);
  \path (A3) ++(130:1cm) coordinate (C3);
   \foreach \i in {1,...,19}
  {
    \path (origin)++(40:0.2*\i cm)  coordinate (a\i);
    \path (origin)++(130:0.2*\i cm)  coordinate (b\i);
    \path (a\i)++(130:4cm) coordinate (ca\i);
    \path (b\i)++(40:4cm) coordinate (cb\i);
    \draw[thin,gray] (a\i) -- (ca\i)  (b\i) -- (cb\i); } 
  }
\end{scope}
\begin{scope}
{\path (0,-1.3)++(40:0.2*0.5)++(130:0.2*0.5) coordinate (origin);  
   \draw[wei2]  (0,-1.3)++(40:0.2*0.5)++(130:0.2*0.5)  circle (2pt);
   \draw[thick] (origin) 
   --++(130:9*0.2)
   --++(40:1*0.2)
  --++(-50:5*0.2)	
  --++(40:1*0.2)
  --++(-50:2*0.2)
  --++(40:1*0.2)
  --++(-50:1*0.2)
  --++(40:4*0.2)
  --++(-50:1*0.2)
     --++(220:7*0.2);
     \clip (origin) 
      --++(130:9*0.2)
   --++(40:1*0.2)
  --++(-50:5*0.2)	
  --++(40:1*0.2)
  --++(-50:2*0.2)
  --++(40:1*0.2)
  --++(-50:1*0.2)
  --++(40:4*0.2)
  --++(-50:1*0.2)
     --++(220:7*0.2);  \draw[fill,cyan!40] (origin) 
   --++(40:2*0.2)
   --++(130:1*0.2)
    --++(40:1*0.2)
   --++(130:1*0.2)
    --++(40:1*0.2)
 --++(130:1*0.2)
    --++(40:1*0.2)
--++(-50:1*0.2)
    --++(220:1*0.2)--++(-50:1*0.2)
    --++(220:1*0.2)--++(-50:1*0.2)
    --++(220:1*0.2);

    \path (40:1cm) coordinate (A1);
  \path (40:2cm) coordinate (A2);
  \path (40:3cm) coordinate (A3);
  \path (40:4cm) coordinate (A4);
  \path (130:1cm) coordinate (B1);
  \path (130:2cm) coordinate (B2);
  \path (130:3cm) coordinate (B3);
  \path (130:4cm) coordinate (B4);
  \path (A1) ++(130:3cm) coordinate (C1);
  \path (A2) ++(130:2cm) coordinate (C2);
  \path (A3) ++(130:1cm) coordinate (C3);
   \foreach \i in {1,...,19}
  {
    \path (origin)++(40:0.2*\i cm)  coordinate (a\i);
    \path (origin)++(130:0.2*\i cm)  coordinate (b\i);
    \path (a\i)++(130:4cm) coordinate (ca\i);
    \path (b\i)++(40:4cm) coordinate (cb\i);
    \draw[thin,gray] (a\i) -- (ca\i)  (b\i) -- (cb\i); } 
  }  \end{scope}

  \path (origin)++(40:0.2*1.4 cm)++(-50:0.2*1.4 cm)++(-90:0.2cm)
 coordinate (XXXX);
 \path (XXXX)++(90:3.6cm) coordinate (XXXXY);
 
   \draw[<->,thick,black] 
    (XXXX)
    --
    (XXXXY);
\end{tikzpicture}
\quad\quad\quad\quad 
 \begin{tikzpicture}[scale=1.8]
  \begin{scope}
{   \path (0,0) coordinate (origin);     \draw[wei2] (0,0)   circle (2pt);
  \draw[thick] (origin) 
    --++(130:10*0.2)
   --++(40:1*0.2)
  --++(-50:1*0.2)	
  --++(40:1*0.2)
  --++(-50:1*0.2)--++(40:1*0.2)
   --++(-50:4*0.2)--++(40:1*0.2)  --++(-50:1*0.2)
  --++(40:1*0.2)
  --++(-50:2*0.2)
    --++(40:5*0.2)
  --++(-50:1*0.2)
    --++(220:10*0.2);

     \clip (origin) 
    --++(130:10*0.2)
   --++(40:1*0.2)
  --++(-50:1*0.2)	
  --++(40:1*0.2)
  --++(-50:1*0.2)--++(40:1*0.2)
   --++(-50:4*0.2)--++(40:1*0.2)  --++(-50:1*0.2)
  --++(40:1*0.2)
  --++(-50:2*0.2)
   --++(40:5*0.2)
  --++(-50:1*0.2)
    --++(220:10*0.2);
  \draw[fill,cyan!40] (origin) 
   --++(40:2*0.2)
   --++(130:1*0.2)
    --++(40:1*0.2)
   --++(130:1*0.2)
    --++(40:1*0.2)
 --++(130:1*0.2)
    --++(40:1*0.2)
--++(-50:1*0.2)
    --++(220:1*0.2)--++(-50:1*0.2)
    --++(220:1*0.2)--++(-50:1*0.2)
    --++(220:1*0.2);

   \path (40:1cm) coordinate (A1);
  \path (40:2cm) coordinate (A2);
  \path (40:3cm) coordinate (A3);
  \path (40:4cm) coordinate (A4);
  \path (130:1cm) coordinate (B1);
  \path (130:2cm) coordinate (B2);
  \path (130:3cm) coordinate (B3);
  \path (130:4cm) coordinate (B4);
  \path (A1) ++(130:3cm) coordinate (C1);
  \path (A2) ++(130:2cm) coordinate (C2);
  \path (A3) ++(130:1cm) coordinate (C3);
  \foreach \i in {1,...,19}
  {
    \path (origin)++(40:0.2*\i cm)  coordinate (a\i);
    \path (origin)++(130:0.2*\i cm)  coordinate (b\i);
    \path (a\i)++(130:4cm) coordinate (ca\i);
    \path (b\i)++(40:4cm) coordinate (cb\i);
    \draw[thin,gray] (a\i) -- (ca\i)  (b\i) -- (cb\i); } 
  }
  \end{scope}
 \begin{scope}{\path (0,-1.3)++(40:0.2*0.5)++(130:0.2*0.5) coordinate (origin);  
   \draw[wei2]  (0,-1.3)++(40:0.2*0.5)++(130:0.2*0.5)  circle (2pt);  \draw[thick] (origin) 
     --++(130:8*0.2)
   --++(40:1*0.2)
  --++(-50:4*0.2)	
  --++(40:1*0.2)
  --++(-50:2*0.2)
    --++(40:1*0.2)
  --++(-50:1*0.2)
    --++(40:4*0.2)
  --++(-50:1*0.2)
     --++(220:7*0.2);
     \clip (origin) 
    --++(130:8*0.2)
   --++(40:1*0.2)
  --++(-50:4*0.2)	
  --++(40:1*0.2)
  --++(-50:2*0.2)
    --++(40:1*0.2)
  --++(-50:1*0.2)
    --++(40:4*0.2)
  --++(-50:1*0.2)
     --++(220:7*0.2);
       \draw[fill,cyan!40] (origin) 
   --++(40:2*0.2)
   --++(130:1*0.2)
    --++(40:1*0.2)
   --++(130:1*0.2)
    --++(40:1*0.2)
 --++(130:1*0.2)
    --++(40:1*0.2)
--++(-50:1*0.2)
    --++(220:1*0.2)--++(-50:1*0.2)
    --++(220:1*0.2)--++(-50:1*0.2)
    --++(220:1*0.2);

   \path (40:1cm) coordinate (A1);
  \path (40:2cm) coordinate (A2);
  \path (40:3cm) coordinate (A3);
  \path (40:4cm) coordinate (A4);
  \path (130:1cm) coordinate (B1);
  \path (130:2cm) coordinate (B2);
  \path (130:3cm) coordinate (B3);
  \path (130:4cm) coordinate (B4);
  \path (A1) ++(130:3cm) coordinate (C1);
  \path (A2) ++(130:2cm) coordinate (C2);
  \path (A3) ++(130:1cm) coordinate (C3);
  \foreach \i in {1,...,19}
  {
    \path (origin)++(40:0.2*\i cm)  coordinate (a\i);
    \path (origin)++(130:0.2*\i cm)  coordinate (b\i);
    \path (a\i)++(130:4cm) coordinate (ca\i);
    \path (b\i)++(40:4cm) coordinate (cb\i);
    \draw[thin,gray] (a\i) -- (ca\i)  (b\i) -- (cb\i); } 
  }  \end{scope}

  \path (origin)++(40:0.2*1.4 cm)++(-50:0.2*1.4 cm)++(-90:0.2cm)
 coordinate (XXXX);
 \path (XXXX)++(90:3.6cm) coordinate (XXXXY);
 
   \draw[<->,thick,black] 
    (XXXX)
    --
    (XXXXY);
\end{tikzpicture}
\end{center}
\caption{ 
The bipartition $\la = ((11,9,7,3^2,2,1^3), (9,4,2,1^4))$ on the left and the bipartition $\mu = ((10,9,8,4,3,1^5),(8,4,2,1^4))$ on the right, with $\theta = (0,1)$.  
The black line denotes the vertical cut though the line $x=5.2$. 
  We have also shaded in the longest diagonal of boxes which the cut goes through.  
}
\label{level1}
\end{figure}

Intuitively, the left- and right-hand sides of these bipartitions with respect to this cut are the `smallest' bipartitions which contain both the boxes intersected by the line and all the boxes to the left (respectively right). For $\la$ and $\mu$ as above, these cuts are depicted in \cref{level2}.
In this case,
\begin{alignat*}2
\la^L &= ((11,9,7,3^2), (9,4,2)),
&\qquad
\la^R &= ((3^5,2,1^3),(1^7)),\\
\mu^L &= ((10,9,8,4,3),(8,4,2)),
&\qquad
\mu^R &= ((3^5,1^5),(1^7)).
\end{alignat*}
Now, applying our main theorem to this example, we obtain
\[
d_{\la \mu}(t)= d_{\la^L \mu^L}(t) \times d_{\la^R \mu^R}(t).
\]
Now let $e=5$ and $\kappa=(0,2)$.
By results from \cite{bs15} (and the above) we obtain
\[
d_{\la\mu}(t) = (t^5 + t^3) \times t^2 = t^7 + t^5.
\]

\begin{figure}[h]
\begin{center}\scalefont{0.6}
\begin{tikzpicture}[scale=1.8]
\clip(-2,-1.5) rectangle (1.5,2.25); \begin{scope}
{\path (0,0) coordinate (origin);
\draw[wei2] (0,0)   circle (2pt);
  \draw[thick] (origin)
   --++(130:11*0.2)
   --++(40:1*0.2)
  --++(-50:2*0.2)	
  --++(40:1*0.2)
  --++(-50:2*0.2)
  --++(40:1*0.2)
  --++(-50:4*0.2)
  --++(40:2*0.2)
  --++(-50:3*0.2)
     --++(220:5*0.2);
     \clip (origin) 
       --++(130:11*0.2)
   --++(40:1*0.2)
  --++(-50:2*0.2)	
  --++(40:1*0.2)
  --++(-50:2*0.2)
  --++(40:1*0.2)
  --++(-50:4*0.2)
  --++(40:2*0.2)
  --++(-50:3*0.2)
     --++(220:5*0.2);  \draw[fill,cyan!40] (origin) 
   --++(40:2*0.2)
   --++(130:1*0.2)
    --++(40:1*0.2)
   --++(130:1*0.2)
    --++(40:1*0.2)
 --++(130:1*0.2)
    --++(40:1*0.2)
--++(-50:1*0.2)
    --++(220:1*0.2)--++(-50:1*0.2)
    --++(220:1*0.2)--++(-50:1*0.2)
    --++(220:1*0.2);

   \path (40:1cm) coordinate (A1);
  \path (40:2cm) coordinate (A2);
  \path (40:3cm) coordinate (A3);
  \path (40:4cm) coordinate (A4);
  \path (130:1cm) coordinate (B1);
  \path (130:2cm) coordinate (B2);
  \path (130:3cm) coordinate (B3);
  \path (130:4cm) coordinate (B4);
  \path (A1) ++(130:3cm) coordinate (C1);
  \path (A2) ++(130:2cm) coordinate (C2);
  \path (A3) ++(130:1cm) coordinate (C3);
  \foreach \i in {1,...,19}
  {
    \path (origin)++(40:0.2*\i cm)  coordinate (a\i);
    \path (origin)++(130:0.2*\i cm)  coordinate (b\i);
    \path (a\i)++(130:4cm) coordinate (ca\i);
    \path (b\i)++(40:4cm) coordinate (cb\i);
    \draw[thin,gray] (a\i) -- (ca\i)  (b\i) -- (cb\i); } 
  }
  \end{scope}
  \begin{scope}
{   \path (0,-1.3)++(40:0.2*0.5)++(130:0.2*0.5) coordinate (origin);  
   \draw[wei2]  (0,-1.3)++(40:0.2*0.5)++(130:0.2*0.5)  circle (2pt);
  \draw[thick] (origin) 
   --++(130:9*0.2)
   --++(40:1*0.2)
  --++(-50:5*0.2)	
  --++(40:1*0.2)
  --++(-50:2*0.2)
  --++(40:1*0.2)
  --++(-50:1*0.2)
   --++(-50:1*0.2)
     --++(220:3*0.2); 
     \clip (origin) 
    --++(130:9*0.2)
   --++(40:1*0.2)
  --++(-50:5*0.2)	
  --++(40:1*0.2)
  --++(-50:2*0.2)
  --++(40:1*0.2)
  --++(-50:1*0.2)
   --++(-50:1*0.2)
     --++(220:3*0.2);   \draw[fill,cyan!40] (origin) 
   --++(40:2*0.2)
   --++(130:1*0.2)
    --++(40:1*0.2)
   --++(130:1*0.2)
    --++(40:1*0.2)
 --++(130:1*0.2)
    --++(40:1*0.2)
--++(-50:1*0.2)
    --++(220:1*0.2)--++(-50:1*0.2)
    --++(220:1*0.2)--++(-50:1*0.2)
    --++(220:1*0.2);

   \path (40:1cm) coordinate (A1);
  \path (40:2cm) coordinate (A2);
  \path (40:3cm) coordinate (A3);
  \path (40:4cm) coordinate (A4);
  \path (130:1cm) coordinate (B1);
  \path (130:2cm) coordinate (B2);
  \path (130:3cm) coordinate (B3);
  \path (130:4cm) coordinate (B4);
  \path (A1) ++(130:3cm) coordinate (C1);
  \path (A2) ++(130:2cm) coordinate (C2);
  \path (A3) ++(130:1cm) coordinate (C3);
  \foreach \i in {1,...,19}
  {
    \path (origin)++(40:0.2*\i cm)  coordinate (a\i);
    \path (origin)++(130:0.2*\i cm)  coordinate (b\i);
    \path (a\i)++(130:4cm) coordinate (ca\i);
    \path (b\i)++(40:4cm) coordinate (cb\i);
    \draw[thin,gray] (a\i) -- (ca\i)  (b\i) -- (cb\i); } 
  }  \end{scope}

 \path (origin)++(40:0.2*1.4 cm)++(-50:0.2*1.4 cm)++(-90:0.3cm)
 coordinate (XXXX);

 \path (XXXX)++(90:3.6cm) coordinate (XXXXY);
 
   \draw[<->,thick,black] 
    (XXXX)
    --
    (XXXXY);
\end{tikzpicture}
\quad
\begin{tikzpicture}[scale=1.8]
\clip(-1.3,-1.5) rectangle (1.5,2.25); 

  \begin{scope}
{   \path (0,0) coordinate (origin);     \draw[wei2] (0,0)   circle (2pt);
  \draw[thick] (origin) 
    --++(130:10*0.2)
   --++(40:1*0.2)
  --++(-50:1*0.2)	
  --++(40:1*0.2)
  --++(-50:1*0.2)--++(40:1*0.2)
   --++(-50:4*0.2)--++(40:1*0.2)  --++(-50:1*0.2)
   --++(40:1*0.2)
  --++(-50:3*0.2)
    --++(220:5*0.2);

     \clip (origin) 
       --++(130:10*0.2)
   --++(40:1*0.2)
  --++(-50:1*0.2)	
  --++(40:1*0.2)
  --++(-50:1*0.2)--++(40:1*0.2)
   --++(-50:4*0.2)--++(40:1*0.2)  --++(-50:1*0.2)
   --++(40:1*0.2)
  --++(-50:3*0.2)
    --++(220:5*0.2);  \draw[fill,cyan!40] (origin) 
   --++(40:2*0.2)
   --++(130:1*0.2)
    --++(40:1*0.2)
   --++(130:1*0.2)
    --++(40:1*0.2)
 --++(130:1*0.2)
    --++(40:1*0.2)
--++(-50:1*0.2)
    --++(220:1*0.2)--++(-50:1*0.2)
    --++(220:1*0.2)--++(-50:1*0.2)
    --++(220:1*0.2);

   \path (40:1cm) coordinate (A1);
  \path (40:2cm) coordinate (A2);
  \path (40:3cm) coordinate (A3);
  \path (40:4cm) coordinate (A4);
  \path (130:1cm) coordinate (B1);
  \path (130:2cm) coordinate (B2);
  \path (130:3cm) coordinate (B3);
  \path (130:4cm) coordinate (B4);
  \path (A1) ++(130:3cm) coordinate (C1);
  \path (A2) ++(130:2cm) coordinate (C2);
  \path (A3) ++(130:1cm) coordinate (C3);
  \foreach \i in {1,...,19}
  {
    \path (origin)++(40:0.2*\i cm)  coordinate (a\i);
    \path (origin)++(130:0.2*\i cm)  coordinate (b\i);
    \path (a\i)++(130:4cm) coordinate (ca\i);
    \path (b\i)++(40:4cm) coordinate (cb\i);
    \draw[thin,gray] (a\i) -- (ca\i)  (b\i) -- (cb\i); } 
  }
  \end{scope}
 \begin{scope}{\path (0,-1.3)++(40:0.2*0.5)++(130:0.2*0.5) coordinate (origin);  
   \draw[wei2]  (0,-1.3)++(40:0.2*0.5)++(130:0.2*0.5)  circle (2pt); 
    \draw[thick] (origin) 
     --++(130:8*0.2)
   --++(40:1*0.2)
  --++(-50:4*0.2)	
  --++(40:1*0.2)
  --++(-50:2*0.2)
  --++(40:1*0.2)
   --++(-50:1*0.2)
   --++(-50:1*0.2)
     --++(220:3*0.2); 
     \clip (origin) 
        --++(130:8*0.2)
   --++(40:1*0.2)
  --++(-50:4*0.2)	
  --++(40:1*0.2)
  --++(-50:2*0.2)
  --++(40:1*0.2)
    --++(-50:1*0.2)
   --++(-50:1*0.2)
     --++(220:3*0.2);   \draw[fill,cyan!40] (origin) 
   --++(40:2*0.2)
   --++(130:1*0.2)
    --++(40:1*0.2)
   --++(130:1*0.2)
    --++(40:1*0.2)
 --++(130:1*0.2)
    --++(40:1*0.2)
--++(-50:1*0.2)
    --++(220:1*0.2)--++(-50:1*0.2)
    --++(220:1*0.2)--++(-50:1*0.2)
    --++(220:1*0.2);

   \path (40:1cm) coordinate (A1);
  \path (40:2cm) coordinate (A2);
  \path (40:3cm) coordinate (A3);
  \path (40:4cm) coordinate (A4);
  \path (130:1cm) coordinate (B1);
  \path (130:2cm) coordinate (B2);
  \path (130:3cm) coordinate (B3);
  \path (130:4cm) coordinate (B4);
  \path (A1) ++(130:3cm) coordinate (C1);
  \path (A2) ++(130:2cm) coordinate (C2);
  \path (A3) ++(130:1cm) coordinate (C3);
  \foreach \i in {1,...,19}
  {
    \path (origin)++(40:0.2*\i cm)  coordinate (a\i);
    \path (origin)++(130:0.2*\i cm)  coordinate (b\i);
    \path (a\i)++(130:4cm) coordinate (ca\i);
    \path (b\i)++(40:4cm) coordinate (cb\i);
    \draw[thin,gray] (a\i) -- (ca\i)  (b\i) -- (cb\i); } 
  }  \end{scope}

    \path (origin)++(40:0.2*1.4 cm)++(-50:0.2*1.4 cm)++(-90:0.2cm)
 coordinate (XXXX);
 \path (XXXX)++(90:3.6cm) coordinate (XXXXY);
 
   \draw[<->,thick,black] 
    (XXXX)
    --
    (XXXXY);
\end{tikzpicture}
\end{center}
\caption{
With bipartitions $\la$ and $\mu$ as in \cref{level1} and a diagonal cut at $x = 5.2$, we arrive at pairs of bipartitions $(\la^L, \mu^L)$  as depicted above (see below for   $(\la^R, \mu^R)$).}\label{level2}
\end{figure}
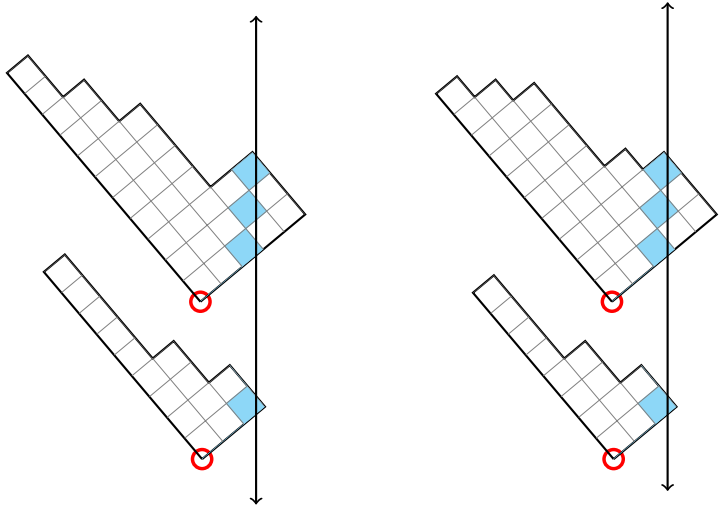

\begin{figure}[ht!]
$$
\scalefont{0.6}
\begin{tikzpicture}[scale=1.8]
\clip(-2,-1.5) rectangle (1.5,2.25);  \begin{scope}
{   \path (0,0) coordinate (origin);     \draw[wei2] (0,0)   circle (2pt);
  \draw[thick] (origin) 
   --++(130:3*0.2)
  --++(40:5*0.2)
  --++(-50:1*0.2)
  --++(40:1*0.2)
  --++(-50:1*0.2)
  --++(40:3*0.2)
  --++(-50:1*0.2)
    --++(220:9*0.2);
     \clip (origin) 
    --++(130:3*0.2)
  --++(40:5*0.2)
  --++(-50:1*0.2)
  --++(40:1*0.2)
  --++(-50:1*0.2)
  --++(40:3*0.2)
  --++(-50:1*0.2)
    --++(220:9*0.2);  \draw[fill,cyan!40] (origin) 
   --++(40:2*0.2)
   --++(130:1*0.2)
    --++(40:1*0.2)
   --++(130:1*0.2)
    --++(40:1*0.2)
 --++(130:1*0.2)
    --++(40:1*0.2)
--++(-50:1*0.2)
    --++(220:1*0.2)--++(-50:1*0.2)
    --++(220:1*0.2)--++(-50:1*0.2)
    --++(220:1*0.2);

   \path (40:1cm) coordinate (A1);
  \path (40:2cm) coordinate (A2);
  \path (40:3cm) coordinate (A3);
  \path (40:4cm) coordinate (A4);
  \path (130:1cm) coordinate (B1);
  \path (130:2cm) coordinate (B2);
  \path (130:3cm) coordinate (B3);
  \path (130:4cm) coordinate (B4);
  \path (A1) ++(130:3cm) coordinate (C1);
  \path (A2) ++(130:2cm) coordinate (C2);
  \path (A3) ++(130:1cm) coordinate (C3);
  \foreach \i in {1,...,19}
  {
    \path (origin)++(40:0.2*\i cm)  coordinate (a\i);
    \path (origin)++(130:0.2*\i cm)  coordinate (b\i);
    \path (a\i)++(130:4cm) coordinate (ca\i);
    \path (b\i)++(40:4cm) coordinate (cb\i);
    \draw[thin,gray] (a\i) -- (ca\i)  (b\i) -- (cb\i); } 
  }
  \end{scope}
  \begin{scope}
{   \path (0,-1.3)++(40:0.2*0.5)++(130:0.2*0.5) coordinate (origin);  
   \draw[wei2]  (0,-1.3)++(40:0.2*0.5)++(130:0.2*0.5)  circle (2pt);
  \draw[thick] (origin) 
   --++(130:1*0.2)
  --++(40:3*0.2)
  --++(40:4*0.2)
  --++(-50:1*0.2)
     --++(220:7*0.2);
     \clip (origin) 
   --++(130:1*0.2)
  --++(40:3*0.2)
  --++(40:4*0.2)
  --++(-50:1*0.2)
     --++(220:7*0.2);    \draw[fill,cyan!40] (origin) 
   --++(40:2*0.2)
   --++(130:1*0.2)
    --++(40:1*0.2)
   --++(130:1*0.2)
    --++(40:1*0.2)
 --++(130:1*0.2)
    --++(40:1*0.2)
--++(-50:1*0.2)
    --++(220:1*0.2)--++(-50:1*0.2)
    --++(220:1*0.2)--++(-50:1*0.2)
    --++(220:1*0.2);
   \path (40:1cm) coordinate (A1);
  \path (40:2cm) coordinate (A2);
  \path (40:3cm) coordinate (A3);
  \path (40:4cm) coordinate (A4);
  \path (130:1cm) coordinate (B1);
  \path (130:2cm) coordinate (B2);
  \path (130:3cm) coordinate (B3);
  \path (130:4cm) coordinate (B4);
  \path (A1) ++(130:3cm) coordinate (C1);
  \path (A2) ++(130:2cm) coordinate (C2);
  \path (A3) ++(130:1cm) coordinate (C3);
  \foreach \i in {1,...,19}
  {
    \path (origin)++(40:0.2*\i cm)  coordinate (a\i);
    \path (origin)++(130:0.2*\i cm)  coordinate (b\i);
    \path (a\i)++(130:4cm) coordinate (ca\i);
    \path (b\i)++(40:4cm) coordinate (cb\i);
    \draw[thin,gray] (a\i) -- (ca\i)  (b\i) -- (cb\i); } 
  }  \end{scope}

 \path (origin)++(40:0.2*1.4 cm)++(-50:0.2*1.4 cm)++(-90:0.3cm)
 coordinate (XXXX);

 \path (XXXX)++(90:3.6cm) coordinate (XXXXY);
 
   \draw[<->,thick,black] 
    (XXXX)
    --
    (XXXXY);
\end{tikzpicture}
\quad
\begin{tikzpicture}[scale=1.8]
  \clip(-1.3,-1.5) rectangle (1.5,2.25); \begin{scope}
{   \path (0,0) coordinate (origin);     \draw[wei2] (0,0)   circle (2pt);
  \draw[thick] (origin) 
    --++(130:3*0.2)
  --++(40:5*0.2)
  --++(-50:2*0.2)
   --++(40:5*0.2)
  --++(-50:1*0.2)
    --++(220:10*0.2);

     \clip (origin) 
  --++(130:3*0.2)
  --++(40:5*0.2)
  --++(-50:2*0.2)
   --++(40:5*0.2)
  --++(-50:1*0.2)
    --++(220:10*0.2);
  \draw[fill,cyan!40] (origin) 
   --++(40:2*0.2)
   --++(130:1*0.2)
    --++(40:1*0.2)
   --++(130:1*0.2)
    --++(40:1*0.2)
 --++(130:1*0.2)
    --++(40:1*0.2)
--++(-50:1*0.2)
    --++(220:1*0.2)--++(-50:1*0.2)
    --++(220:1*0.2)--++(-50:1*0.2)
    --++(220:1*0.2);

   \path (40:1cm) coordinate (A1);
  \path (40:2cm) coordinate (A2);
  \path (40:3cm) coordinate (A3);
  \path (40:4cm) coordinate (A4);
  \path (130:1cm) coordinate (B1);
  \path (130:2cm) coordinate (B2);
  \path (130:3cm) coordinate (B3);
  \path (130:4cm) coordinate (B4);
  \path (A1) ++(130:3cm) coordinate (C1);
  \path (A2) ++(130:2cm) coordinate (C2);
  \path (A3) ++(130:1cm) coordinate (C3);
  \foreach \i in {1,...,19}
  {
    \path (origin)++(40:0.2*\i cm)  coordinate (a\i);
    \path (origin)++(130:0.2*\i cm)  coordinate (b\i);
    \path (a\i)++(130:4cm) coordinate (ca\i);
    \path (b\i)++(40:4cm) coordinate (cb\i);
    \draw[thin,gray] (a\i) -- (ca\i)  (b\i) -- (cb\i); } 
  }
  \end{scope}
 \begin{scope}{\path (0,-1.3)++(40:0.2*0.5)++(130:0.2*0.5) coordinate (origin);  
   \draw[wei2]  (0,-1.3)++(40:0.2*0.5)++(130:0.2*0.5)  circle (2pt);  \draw[thick] (origin) 
     --++(130:1*0.2)
    --++(40:3*0.2)
    --++(40:4*0.2)
  --++(-50:1*0.2)
     --++(220:7*0.2);
     \clip (origin) 
        --++(130:1*0.2)
    --++(40:3*0.2)
    --++(40:4*0.2)
  --++(-50:1*0.2)
     --++(220:7*0.2);  \draw[fill,cyan!40] (origin) 
   --++(40:2*0.2)
   --++(130:1*0.2)
    --++(40:1*0.2)
   --++(130:1*0.2)
    --++(40:1*0.2)
 --++(130:1*0.2)
    --++(40:1*0.2)
--++(-50:1*0.2)
    --++(220:1*0.2)--++(-50:1*0.2)
    --++(220:1*0.2)--++(-50:1*0.2)
    --++(220:1*0.2);

   \path (40:1cm) coordinate (A1);
  \path (40:2cm) coordinate (A2);
  \path (40:3cm) coordinate (A3);
  \path (40:4cm) coordinate (A4);
  \path (130:1cm) coordinate (B1);
  \path (130:2cm) coordinate (B2);
  \path (130:3cm) coordinate (B3);
  \path (130:4cm) coordinate (B4);
  \path (A1) ++(130:3cm) coordinate (C1);
  \path (A2) ++(130:2cm) coordinate (C2);
  \path (A3) ++(130:1cm) coordinate (C3);
  \foreach \i in {1,...,19}
  {
    \path (origin)++(40:0.2*\i cm)  coordinate (a\i);
    \path (origin)++(130:0.2*\i cm)  coordinate (b\i);
    \path (a\i)++(130:4cm) coordinate (ca\i);
    \path (b\i)++(40:4cm) coordinate (cb\i);
    \draw[thin,gray] (a\i) -- (ca\i)  (b\i) -- (cb\i); } 
  }  \end{scope}

    \path (origin)++(40:0.2*1.4 cm)++(-50:0.2*1.4 cm)++(-90:0.2cm)
 coordinate (XXXX);
 \path (XXXX)++(90:3.6cm) coordinate (XXXXY);
 
   \draw[<->,thick,black] 
    (XXXX)
    --
    (XXXXY);
\end{tikzpicture}$$
\caption{
With bipartitions $\la$ and $\mu$ as in \cref{level1} and a diagonal cut at $x = 5.2$, we arrive at pairs of bipartitions $(\la^R, \mu^R)$  as depicted above (see below for   $(\la^L, \mu^L)$).}\label{level2a}
\end{figure}
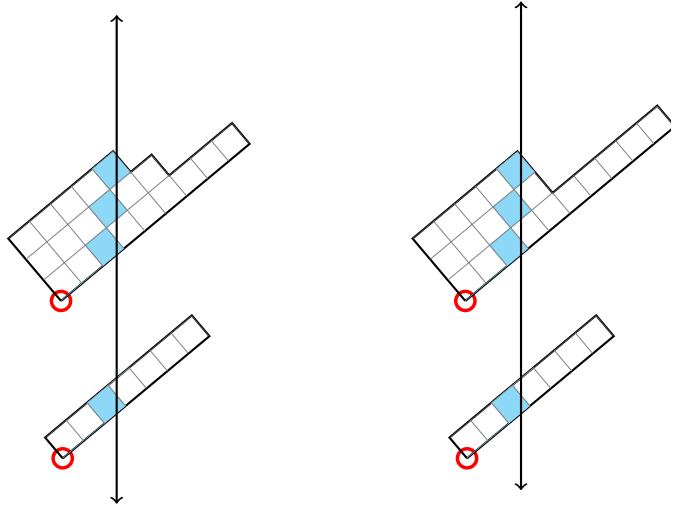

Finally, we remark that our main interest in the diagrammatic Cherednik algebras comes from the manner in which they control the representation theory of cyclotomic quiver Hecke algebras $H_n(\kappa)$ of affine type $A$ (over fields of arbitrary characteristic!).  
For each weighting, $\theta \in \ZZ^\ell$, we have a corresponding cellular structure on $H_n(\kappa)$, see \cite{bowman17} for details.
Given $\theta \in \ZZ^\ell$ a weighting, we let $\{\Delta^\theta(\la) \mid \la \in \mptn \ell n\}$ denote the set of cell modules and let $\{D^\theta(\mu) \mid \mu \in \Theta\subseteq \mptn \ell n\}$ denote the corresponding set of simple modules for $H_n(\kappa)$.
The graded decomposition matrix with respect to this cellular structure appears as a submatrix of the decomposition matrix of $A(n,\theta,\kappa)$ (see \cite[Corollary 4.3]{bowman17}) and we hence obtain the following corollary.

\begin{Main2}
Let $R$ be an arbitrary field.
Let $ \la\in \mptn \ell n$ , $\mu \in \Theta\subseteq \mptn \ell n$ and let $a \in \RR$.
If $(\la,\mu)$ admits a $\theta$-diagonal cut at $x=a$ into two pieces $(\la^L,\mu^L)$ and $(\la^R,\mu^R)$, then we can factorise the graded decomposition numbers with respect to the corresponding cellular structure of $H_n(\kappa)$ as follows.
\[
[\Delta^\theta(\la): D^\theta(\mu)\langle k \rangle ]  = 
\sum_{i+j=k }[\Delta^\theta(\la^L): D^\theta(\mu^L)\langle i \rangle] \times [\Delta^\theta(\la^R): D^\theta(\mu^R)\langle j \rangle]
\]
\end{Main2}

\begin{ack}
The authors would like to thank the Royal Commission for the Exhibition of 1851 and the Japan Society for the Promotion of Science for their financial support.
\end{ack}

\section{The diagrammatic Cherednik algebra}\label{lCasec}

In this section we define the diagrammatic Cherednik algebras and recall the combinatorics underlying their representation theory.
We fix an arbitrary integral domain $R$.
We will first prove a general result for relating a graded cellular algebra (with a highest weight theory) to certain
subquotient algebras.

\begin{defn}\label{defn1}
Suppose that $A$ is a $\ZZ$-graded $R$-algebra which is of finite rank over $R$.
We say that $A$ is a {\sf graded cellular algebra with a highest weight theory} if the following conditions hold.

The algebra is equipped with a {\sf cell datum} $(\Lambda,\TSStd,C,\degr)$, where $(\Lambda,\trianglerighteq )$ is the {\sf weight poset}.
For each $\la,\mu\in\Lambda$ such that $\la \trianglerighteq \mu$, we have a finite set, denoted $\TSStd(\la,\mu)$, and we let $\TSStd(\la) = \cup_{\mu \domby \la} \TSStd(\la,\mu)$.
There exist maps
\[
C:{\coprod_{\la\in\Lambda}\TSStd(\la)\times \TSStd(\la)}\to A 
  \quad\text{and}\quad
 \degr: {\coprod_{\la\in\Lambda}\TSStd(\la)} \to \ZZ
\]
such that $C$ is injective. We denote $C(\SSTS,\SSTT) = c^\la_{\SSTS\SSTT}$ for $\SSTS,\SSTT\in\TSStd(\la)$.  
We require that $A$ satisfies properties (1)--(6), below.  
\begin{enumerate}
\item Each element $c^\la_{\SSTS,\SSTT}$ is homogeneous of degree 
\[
\degr(c^\la_{\SSTS,\SSTT}) = \degr(\SSTS)+\degr(\SSTT),
\]
for $\la\in\Lambda$ and $\SSTS,\SSTT\in \TSStd(\la)$.
\item The set $\{c^\la_{\SSTS,\SSTT}\mid\SSTS,\SSTT \in \TSStd(\la), \, \la \in \Lambda \}$ is an $R$-basis of $A$.
\item  If $\SSTS,\SSTT\in \TSStd(\la)$, for some $\la\in\Lambda$, and $a\in A$ then there exist scalars $r_{\SSTS,\SSTU}(a)$, which do not depend on $\SSTT$, such that
\[
ac^\la_{\SSTS,\SSTT} = \; \sum_{\mathclap{\SSTU \in \TSStd(\la)}} \; r_{\SSTS,\SSTU}(a)c^\la_{\SSTU,\SSTT}\pmod {A^{\vartriangleright  \la}},
\]
where $A^{\vartriangleright \la}$ is the $R$-submodule of $A$ spanned by
\[
\{c^\mu_{\SSTQ,\SSTR}\mid\mu \vartriangleright  \la\text{ and }\SSTQ,\SSTR\in \TSStd(\mu )\}.
\]
\item The $R$-linear map $*:A\to A$ determined by $(c^\la_{\SSTS,\SSTT})^*=c^\la_{\SSTT,\SSTS}$, for all $\la\in\Lambda$ and all $\SSTS,\SSTT\in\TSStd(\la)$, is an anti-isomorphism of $A$.
\item The identity $1_A$ of $A$ has a decomposition $1_A = \sum_{\la \in \Lambda} 1_\la$ into pairwise orthogonal idempotents $1_\la$.
\item For $\SSTS\in\TSStd(\la,\mu)$, $\SSTT \in\TSStd(\la,\nu)$, we have that $1_\mu c^\la_{\SSTS,\SSTT}1_\nu =  c^\la_{\SSTS,\SSTT}$.
There exists a unique element $\SSTT^\la \in \TSStd(\la,\la)$, and $c_{\SSTT^\la,\SSTT^\la}^\la = 1_\la$.
\end{enumerate}
\end{defn}

Let $A$ be a graded cellular algebra with a highest weight theory, $R$ be a field, and $t$ be an indeterminate over $\ZZ_{\geq 0}$.
The {\sf graded decomposition matrix} of $A$ is the matrix $\Dec=(d_{\la\mu}(t))$, where for $\la,\mu\in\Lambda$ we have
\[
d_{\la\mu}(t)=\sum_{k\in\ZZ} [\Delta(\la):L(\mu)\langle k\rangle]\,t^k,
\]
where $\Lambda$ indexes the {\sf standard modules} $\Delta(\la)$ and their simple heads $L(\la)$, and $\langle k\rangle$ denotes a grading shift by $k$.

\begin{rem}
In a finite-dimensional (graded) cellular algebra over a field, all simple (graded) modules arise as heads of cell modules $\Delta(\la)$ (and their graded shifts).
Properties (5) and (6) in \cref{defn1} ensure that {\sf every} cell module gives rise to a simple head, so that the algebra is also quasi-hereditary.
This is why we refer to the cell modules as standard modules.
\end{rem}

\begin{defn}
Let  $Q\subseteq\Lambda$.
We say that $Q$ is {\sf saturated} if for any $\alpha\in Q$ and $\beta \in \Lambda$ with $\beta \vartriangleleft \alpha$, we have that $\beta \in Q$. We say that $Q$ is {\sf cosaturated} if its complement in $\Lambda$ is saturated.
\end{defn}

\begin{defn}\label{subquot}
Let $E$ and $F$ denote subsets of $\mptn \ell n$ which are saturated and co-saturated, respectively.
We let
\[
e = \sum_{\mu \in E \cap F} 1_\mu \quad \text{and} \quad
f = \sum_{\mu \in F\setminus E} 1_\mu
\]
in $A$.
We let $A_{E\cap F} $ denote the subquotient of $A$ given by
\[
A_{E\cap F} = e(A/(A f A))e.
\]
\end{defn}

\begin{prop}\label{3.6rmk}
The algebra $A_{E \cap F}$ is a graded cellular algebra with a highest weight theory.
The cellular basis is given by
\[
\{c^\la_{\SSTS,\SSTT} \mid \SSTS \in \mathcal{T}(\la,\mu), \, \SSTT\in\mathcal{T}(\la,\nu), \, \la, \mu, \nu \in E \cap F\},
\]
with respect to the partial order on $E\cap F \subseteq \Lambda$.
Moreover, if $R$ is a field, then
\[
d_{\la \mu}^{A }(t)= d_{\la \mu}^{A_{E\cap F}}(t)
\]
and, furthermore,
\[
\Ext^k_{A} (\Delta^A(\la), \Delta^A(\mu)) \cong
\Ext^k_{A_{E\cap F}} (\Delta^{A_{E\cap F}}(\la), \Delta^{A_{E\cap F}}(\mu)) \]\[
\Ext^k_{A} (\Delta^A(\la), L^A(\mu)) \cong
\Ext^k_{A_{E\cap F}} (\Delta^{A_{E\cap F}}(\la), L^{A_{E\cap F}}(\mu)) \] 
for all $k\geq 0$.
\end{prop}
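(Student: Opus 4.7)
The plan is to factor the passage $A \mapsto A_{E\cap F} = e(A/AfA)e$ into two cellular operations — a quotient by a cellular ideal followed by an idempotent truncation — each of which interacts cleanly with the highest weight structure. The first key observation is the identification of $AfA$ with the cellular ideal $A^{F\setminus E}$ spanned by $\{c^\la_{\SSTS\SSTT} : \la \in F\setminus E\}$. Because $E$ is saturated and $F$ cosaturated, $F\setminus E$ is an upward-closed subset of $\Lambda$, so by axiom~(3) of \cref{defn1} the $R$-span $A^{F\setminus E}$ is a two-sided ideal. Axiom~(6) gives $1_\mu = c^\mu_{\SSTT^\mu\SSTT^\mu} \in A^{F\setminus E}$ for each $\mu \in F\setminus E$, so $AfA \subseteq A^{F\setminus E}$. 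For the reverse containment, $1_\la \in AfA$ for every $\la \in F\setminus E$, and axiom~(3) lets us reach each $c^\la_{\SSTS\SSTT}$ from $1_\la$ by two-sided multiplication modulo $A^{\doms\la}$; a downward induction on $F\setminus E$ (using that $F\setminus E$ is upward-closed, so elements $\doms\la$ in $F\setminus E$ are already handled) disposes of the error term.

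Next I would write down the cellular basis of $A_{E\cap F}$. The quotient $A/AfA$ has $R$-basis the images $\{\bar c^\la_{\SSTS\SSTT} : \la \notin F\setminus E\}$, and applying $e(-)e$ together with the orthogonality of the $1_\mu$ retains precisely those $\bar c^\la_{\SSTS\SSTT}$ with $\SSTS \in \TSStd(\la,\mu)$ and $\SSTT \in \TSStd(\la,\nu)$ for $\mu,\nu \in E\cap F$. Since $\mu \in F$ and $\la \dom \mu$ force $\la \in F$ by cosaturation, the survival condition $\la \notin F\setminus E$ becomes $\la \in E\cap F$, reproducing the claimed basis. With the basis in hand, axioms (1), (2), (5), (6) are immediate; axiom~(4) follows because $f^\ast = f$ (each $1_\mu = c^\mu_{\SSTT^\mu\SSTT^\mu}$ is $\ast$-fixed) makes $AfA$ $\ast$-stable; and for axiom~(3) the multiplication rule pushes from $A$ to the quotient once one observes that for $\la \in E\cap F$ the image of $A^{\doms \la}$ is exactly the cellular span over $\{\la' \in E\cap F : \la' \doms \la\}$, because any $\la' \doms \la$ lies in $F$ and is therefore either in $E\cap F$ or in $F\setminus E$ (and hence dies in the quotient).

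For the decomposition numbers and $\Ext$ groups, I would invoke two standard principles for cellular algebras with a highest weight theory. First, quotienting by the cell ideal indexed by an upward-closed subset $J$ preserves the $\Delta$'s and $L$'s, and all $\Ext^k$ in both arguments, for labels in $\Lambda\setminus J$, via a Serre-subcategory inclusion of module categories. Second, idempotent truncation by the idempotent attached to an upward-closed subset of the poset yields a Schur functor inducing isomorphisms on $\Ext^k(\Delta(\la),\Delta(\mu))$ and $\Ext^k(\Delta(\la),L(\mu))$ for labels in that subset. Applying the first with $J = F\setminus E$, and then the second with $\Lambda' = E\cap F$ (which I check is upward-closed inside the quotient's poset $(\Lambda\setminus F) \cup E$: if $\mu$ lies above some $\la \in E\cap F$ then $\mu \in F$ by cosaturation, ruling out the $\Lambda\setminus F$ alternative, and forcing $\mu \in E \cap F$), yields the displayed $\Ext$ isomorphisms; extracting composition multiplicities then gives the graded decomposition number equality.

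The principal obstacle is the Schur-functor step above: verifying within the axiomatic framework of \cref{defn1} that $\Delta$-filtrations and their morphisms lift back through the idempotent truncation, so that $\Ext^k$ of standard and simple modules can be computed either in $A/AfA$ or in $A_{E\cap F}$, is not a black-box citation at this level of generality and will require careful bookkeeping with cellular bases (identifying the cell module $\Delta^{A_{E\cap F}}(\la)$ as $e\Delta^{A/AfA}(\la)$ and checking that Hom and extensions between $\Delta$-filtered modules are preserved by multiplication by $e$). This comparison, rather than the cellular bookkeeping of Steps 1–3, will be the technical heart of the proof.
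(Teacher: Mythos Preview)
Your approach is correct and matches the paper's: factor $A \to A_{E\cap F}$ as the quotient by the cell ideal indexed by the cosaturated set $F\setminus E$, followed by truncation at the idempotent $e$, and verify that both steps preserve the relevant structure. The paper's proof is much terser than yours --- it simply asserts that the span over $\la \in F\setminus E$ is an ideal, reads off the quotient and truncated bases, and then dispatches the preservation of decomposition numbers and $\Ext$ groups by citing \cite[Appendix: Lemmas A3.1, A3.3 and A3.13]{Donkin} (noting that the ideal is generated by a degree-zero idempotent, so the ungraded arguments carry over). So the step you flag as the ``principal obstacle'' is handled in the paper by a literature reference rather than by reproving the Schur-functor comparison from the cellular axioms; your more self-contained plan is sound but not what the authors do.
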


\begin{proof}
The set $F\setminus E$ is cosaturated in the $\theta$-dominance ordering and so 
\[
\langle c^\la_{\SSTS,\SSTT} \mid \SSTS \in \mathcal{T}(\la, \mu), \, \SSTT\in \mathcal{T}(\la, \nu), \, \la \in F\setminus E, \mu,\nu\in\Lambda \rangle_{R}
\]
is an ideal of $A$.
The resulting quotient algebra has basis indexed by $\SSTS \in \mathcal{T}(\la,\mu)$, $ \SSTT\in \mathcal{T}(\la,\nu)$ such that $\la, \mu, \nu \not \in (F\setminus E)$. 
Applying the idempotent truncation to this basis 
we obtain the required basis of $A_{E \cap F}$ by condition $(6)$ of \cref{defn1}.
The graded decomposition numbers (as well as dimensions of higher extension groups) are preserved under both the quotient and truncation maps; this follows by the arguments of \cite[Appendix: Lemmas A3.1, A3.3 and A3.13]{Donkin} (for the ungraded case) as the ideal is generated by a (degree zero) idempotent.
\end{proof}

\subsection{Combinatorics of loadings}
We now recall the combinatorics underlying  complex reflection groups and their diagrammatic Cherednik algebras.
We have tried to keep this section brief, and we refer to \cite{bcs15,bowman17,bs15} for further details and many illustrative examples.

Fix integers $\ell ,n\in\ZZ_{\geq 0} $ and $e\in\{2,3,\dots\}\cup\{\infty\}$.
We define a {\sf weighting} $\theta = (\theta_1,\dots, \theta_\ell ) \in \ZZ^\ell$ to be any strictly increasing $\ell $-tuple such that $\theta_j-\theta_i \notin \ell \ZZ$ for $1 \leq i<j \leq \ell$.
Let $\kappa$ denote an $e$-{\sf multicharge} $\kappa = (\kappa_1,\dots,\kappa_\ell )\in (\ZZ/e\ZZ)^\ell$.

We define the corresponding {\sf Russian array} as follows.
For each $1\leq m\leq \ell$, we place a point on the real line at $\theta_m$ and consider the region bounded by  half-lines at angles $3\pi/4 $ and $\pi/4$.
We tile the resulting quadrant with a lattice of squares, each with diagonal of length $2 \g $.

An {\sf $\ell $-multipartition} $\la=(\la^{(1)},\dots,\la^{(\ell)})$ of $n$ is an $\ell$-tuple of partitions such that $|\la^{(1)}|+\dots+ |\la^{(\ell)}|=n$. We will denote the set of $\ell $-multipartitions of $n$ by $\mptn {\ell}n$.
Let $\la=(\la^{(1)},\la^{(2)},\ldots ,\la^{(\ell)}) \in \mptn {\ell}n$.
The {\sf $\theta$-Young diagram} $[\la]_\theta$ is defined to be the set
\[
[\la]_\theta = \{(r,c,m)\in\mathbb{N}\times\mathbb{N}\times\{1,\dots,\ell \} \mid c\leq \la^{(m)}_r\}.
\]
We refer to elements of the $\theta$-Young diagram as {\sf nodes} or {\sf boxes}.
We define the {\sf residue} of a node $(r, c, m) \in [\la]_\theta$ to be $\kappa_m+c-r \pmod e$.

\begin{defn} \label{domdef}
Let $(r,c,m), (r',c',m')$ be two $i$-boxes and $\theta\in \ZZ^\ell$ be a weighting.
We write $(r,c,m) \rhd_\theta  (r',c',m')$ if either
\begin{itemize}
\item[$(i)$] $\theta_m +\ell(r-c) <\theta_{m'} +\ell(r'-c')$ or

\item[$(ii)$] $\theta_m +\ell(r-c) = \theta_{m'} +\ell(r'-c')$
and $r+c < r'+c'$.
\end{itemize}
Given two multipartitions, $\la, \mu \in \mptn \ell n$, we say that $\la$ $\theta$-{\sf dominates} $\mu$ (and write $\mu \vartriangleleft_\theta \la$) if for every $i$-box $(r,c,m) \in [\mu]_\theta$, there exist strictly more $i$-boxes $(r',c',m') \in [\la]_\theta$ which $\theta$-dominate $(r,c,m)$ than there do $i$-boxes $(r',c',m') \in [\mu]_\theta$ which $\theta$-dominate $(r,c,m)$.
We extend this order to subsets (of the same cardinality) of multipartitions in the obvious fashion.
\end{defn}


We now provide a diagrammatic way of encoding the $\theta$-dominance order on $[\la]_\theta$.
For each node of $[\la]_\theta$ we draw a box in the Russian array.
We place the first node, $(1,1,m)$, of component $m$ at $\theta_m$ on the real line, with column numbers increasing going northwest from this node (so that a single row goes northwest), and row numbers increasing going northeast (so that a single column goes northeast).
 The diagram is tilted ever-so-slightly (by an angle of $\varepsilon \ll \tfrac{1}{2n}$ radians) in the clockwise direction so that the top vertex of the box $(r,c,m)$ (that is, the box in the $r$th row and $c$th column of the $m$th component of $ [\la]_\theta $) has $x$-coordinate $\mathbf{I}_{(r,c,m)}^\theta = \theta_m + \ell(r-c) + (r+c)\varepsilon$ (up to first order approximation).
Having made this tilt, we note that the boxes of a $\theta$-Young diagram of a multipartition all have distinct $x$-coordinates (by our assumptions on the weighting and tilt).

Given $\la \in \mptn \ell n$ we have an associated {\sf  loading} $\mathbf{I}_\la^\theta \subseteq \mathbb{R}$ given by the ordered set of real numbers obtained by  projecting the top vertex of each box $(r,c,m)\in [\la]_\theta$ to its $x$-coordinate $\mathbf{I}_{(r,c,m)}^\theta \in\RR$.  
The associated {\sf residue sequence}, ${\rm res}(\la)$, of $\la$ is given by reading the residues of the boxes of $\la$ according to the ordering on ${\bf I}_\la^\theta$.

\begin{rem}
When $\ell = 1$, the $\theta$-dominance order agrees with the usual dominance order on partitions.
\end{rem}


\begin{defn}
 Let $\la,\mu \in \mptn {\ell}n$.
A {\sf $\la$-tableau of weight $\mu$} is a bijective map $\SSTT: [\la]_\theta \to \mathbf{I}_\mu^\theta$ which respects residues.
In other words, $\SSTT$ maps a given node $(r,c,m)$ of  $[\la]_\theta$ to a point $\mathbf{I}_{(r',c',m')}^\theta \in \RR$ for ${(r',c',m')} \in [\mu]_\theta$ such that $\kappa_m+c-r =  \kappa_{m'}+c'-r' \pmod e$.
\end{defn}

\begin{defn}\label{hjkljkhjklh}
A $\la$-tableau, $\SSTT$, of shape $\la$ and weight $\mu$ is said to be {\sf semistandard} if
\begin{itemize}
\item $\SSTT(1,1,m)>\theta_m$,
\item $\SSTT(r,c,m)> \SSTT(r-1,c,m)  +\g$,
\item $\SSTT(r,c,m)> \SSTT(r,c-1,m) -\g$.
\end{itemize}
We  denote the set of all  semistandard tableaux of shape $\la$ and weight $\mu$ by $\SStd(\la,\mu)$.
\end{defn}

\subsection{The diagrammatic Cherednik algebra}\label{relationspageofstuff}

Recall that we have fixed $\ell, n\in \ZZ_{>0}$, and $e\in\{2,3,\dots\}\cup\{\infty\}$.
Given any weighting $\theta\in\ZZ^\ell$ and any $e$-multicharge $\kappa\in (\ZZ/e\ZZ)^\ell$, we now recall the definition of the diagrammatic Cherednik algebra, $A(n,\theta,\kappa)$.

\begin{defn}
We define a $\theta$-{\sf diagram} {of type} $G(\ell,1,n)$ to  be a {\sf frame} $\mathbb{R}\times [0,1]$ with distinguished solid points on the northern and southern boundaries given by the loadings $\mathbf{I}_\mu^\theta$ and $\mathbf{I}_\la^\theta$ for some $\la,\mu \in \mptn {\ell}n$ and a collection of solid strands each of which starts at a northern point, $\mathbf{I}_{(r,c,m)}^\theta$ say, and ends at a southern point, $\mathbf{I}_{(r',c',m')}^\theta$ such that $\kappa_m+c-r \equiv i \equiv \kappa_{m'}+c'-r' \pmod e$ (we refer to this as a {\sf solid $i$-strand}).
We further require that each strand has a mapping diffeomorphically to $[0,1]$ via the projection to the $y$-axis.
Each strand is allowed to carry any number of dots. We draw:
$(i)$ a dashed line $\ell$ units to the left of each strand, which we call a {\sf ghost $i$-strand} or $i$-{\sf ghost}; $(ii)$ vertical red lines at $\theta_m \in \RR$ each of which carries a residue $\kappa_m$ for $1\leq m\leq \ell$ which we call a {\sf red $\kappa_m$-strand}. 
 We require that there are no triple points or tangencies involving any combination of strands, ghosts or red lines and no dots lie on crossings.
We consider these diagrams equivalent if they are related by an isotopy that avoids these tangencies, double points and dots on crossings.
A $\theta$-diagram is depicted in \cref{diagram}, below. 
\end{defn}

\!\!\!\!
\begin{figure}[ht!]
\[
\begin{tikzpicture}[baseline, thick,yscale=1.1,xscale=4.5]\scalefont{0.8}
\draw[wei]  (-1.1, -2)  to[out=90,in=-90] (-1.1, 2);
\draw[red] (-1.1, -2.05) [below] node{$\ \kappa_2$};

  \draw[wei]  (-1.2, -2) to[out=90,in=-90] (-1.2, 2);
   \draw[red] (-1.2, -2.05) [below] node{$\ \kappa_1$};
 
 \draw(-1.5,-2) rectangle (1.4,2);

  \draw[gray, densely dotted] (-0.4+-.5,-2) to[out=90,in=-90]   (-0.4+-1,-1)  to[out=90,in=-90](-0.4+1,.3) to[out=90,in=-90]  
    (-0.4+0,2);
  \draw[gray, densely dotted] (-0.4+.5,-2) to[out=90,in=-90]  (-0.4+.5,0) to[out=90,in=-90] (-0.4+1,2);
  \draw[gray, densely dotted]  (-0.4+1,-2) to[out=90,in=-90]  (-0.4+-1,1) to[out=90,in=-90] (-0.4+.5,2);
  \draw[gray, densely dotted] (-0.4+0,-2) to[out=90,in=-90]  
  (-0.4+0.1,0) to[out=90,in=-90]
  (-0.4+-.5,2);
  \draw[gray, densely dotted]  (-0.4+-1, -2) to[out=95,in=-90]  
  (-0.4+-.5,-1.1) to[out=90,in=-90] (-0.4+-1,0) to[out=90,in=-90] (-0.4+-.5,1)
  to[out=90,in=-90]   (-0.4+-1,2);

  \draw (-.5,-2) to[out=90,in=-90] node[below,at start]{$\ i_2$} (-1,-1)  to[out=90,in=-90](1,.2) to[out=90,in=-90]    (0,2);
  \draw (.5,-2) to[out=90,in=-90] node[below,at start]{$\ i_4$} (.5,0) to[out=90,in=-90] node[midway,circle,fill=black,inner sep=2pt]{} (1,2);
  \draw  (1,-2) to[out=90,in=-90]  node[below,at start]{$\ i_5$} (-1,1) to[out=90,in=-90]
   (.5,2);
  \draw (0,-2) to[out=90,in=-90] node[below,at start]{$\ i_3$}node[midway,below,circle,fill=black,inner sep=2pt]{}
  (0.05,0) to[out=90,in=-90]
  (-.5,2);
  \draw  (-1, -2) to[out=90,in=-90] node[below,at start]{$\ i_1$}
  (-.5,-1) to[out=90,in=-90] (-1,0) to[out=90,in=-90] (-.5,1)
  to[out=90,in=-90]   (-1,2);
\end{tikzpicture} 
\]
\caption{A $\theta$-diagram  for $\theta=(0,1)$  with northern and southern loading
given by ${\bf I}_\omega$ where $\omega= (\varnothing,(1^5))$.}
\label{diagram}
\end{figure}

\begin{defn}[Definition 4.1 \cite{Webster}]
The {\sf diagrammatic Cherednik algebra}, $A(n,\theta,\kappa)$, is the $R$-algebra spanned by all $\theta$-diagrams modulo the following local relations (here a local relation means one that can be applied on a small region of the diagram)
\begin{enumerate}[label=(2.\arabic*)] 
\item\label{rel1}  Any diagram may be deformed isotopically; that is, by a continuous deformation of the diagram which at no point introduces or removes any crossings of strands.    \item\label{rel2} 
For $i\neq j$ we have that dots pass through crossings.
\[
\scalefont{0.8}\begin{tikzpicture}[scale=.5,baseline]
\draw[very thick](-4,0) +(-1,-1) -- +(1,1) node[below,at start]
  {$i$}; \draw[very thick](-4,0) +(1,-1) -- +(-1,1) node[below,at
  start] {$j$}; \fill (-4.5,.5) circle (5pt);
      \node at (-2,0){=}; \draw[very thick](0,0) +(-1,-1) -- +(1,1)
      node[below,at start] {$i$}; \draw[very thick](0,0) +(1,-1) --
      +(-1,1) node[below,at start] {$j$}; \fill (.5,-.5) circle (5pt);
      \node at (4,0){ };
\end{tikzpicture}
\]
\item\label{rel3} For two like-labelled strands, we get an error term.
\[
\scalefont{0.8}\begin{tikzpicture}[scale=.5,baseline]
      \draw[very thick](-4,0) +(-1,-1) -- +(1,1) node[below,at start]
      {$i$}; \draw[very thick](-4,0) +(1,-1) -- +(-1,1) node[below,at
      start] {$i$}; \fill (-4.5,.5) circle (5pt);
       \node at (-2,0){=}; \draw[very thick](0,0) +(-1,-1) -- +(1,1)
      node[below,at start] {$i$}; \draw[very thick](0,0) +(1,-1) --
      +(-1,1) node[below,at start] {$i$}; \fill (.5,-.5) circle (5pt);
      \node at (2,0){+}; \draw[very thick](4,0) +(-1,-1) -- +(-1,1)
      node[below,at start] {$i$}; \draw[very thick](4,0) +(0,-1) --
      +(0,1) node[below,at start] {$i$};
\end{tikzpicture}  \quad \quad \quad
\scalefont{0.8}\begin{tikzpicture}[scale=.5,baseline]
      \draw[very thick](-4,0) +(-1,-1) -- +(1,1) node[below,at start]
      {$i$}; \draw[very thick](-4,0) +(1,-1) -- +(-1,1) node[below,at
      start] {$i$}; \fill (-4.5,-.5) circle (5pt);
           \node at (-2,0){=}; \draw[very thick](0,0) +(-1,-1) -- +(1,1)
      node[below,at start] {$i$}; \draw[very thick](0,0) +(1,-1) --
      +(-1,1) node[below,at start] {$i$}; \fill (.5,.5) circle (5pt);
      \node at (2,0){+}; \draw[very thick](4,0) +(-1,-1) -- +(-1,1)
      node[below,at start] {$i$}; \draw[very thick](4,0) +(0,-1) --
      +(0,1) node[below,at start] {$i$};
\end{tikzpicture}
\]
\item\label{rel4} For double crossings of solid strands, we have the following.
\[
\scalefont{0.8}\begin{tikzpicture}[very thick,scale=0.5,baseline]
\draw (-2.8,-1) .. controls (-1.2,0) ..  +(0,2)
node[below,at start]{$i$};
\draw (-1.2,-1) .. controls (-2.8,0) ..  +(0,2) node[below,at start]{$i$};
\node at (-.5,0) {=};
\node at (0.4,0) {$0$};
\end{tikzpicture}
\hspace{.7cm}
\scalefont{0.8}\begin{tikzpicture}[very thick,scale=0.5,baseline]
\draw (-2.8,-1) .. controls (-1.2,0) ..  +(0,2)
node[below,at start]{$i$};
\draw (-1.2,-1) .. controls (-2.8,0) ..  +(0,2)
node[below,at start]{$j$};
\node at (-.5,0) {=}; 
\draw (1.8,-1) -- +(0,2) node[below,at start]{$j$};
\draw (1,-1) -- +(0,2) node[below,at start]{$i$}; 
\end{tikzpicture}
\]
\end{enumerate}
\begin{enumerate}[resume, label=(2.\arabic*)]  
\item\label{rel5} If $j\neq i-1$,  then we can pass ghosts through solid strands.
\[
\begin{tikzpicture}[very thick,xscale=1,yscale=0.5,baseline]
\draw (1,-1) to[in=-90,out=90]  node[below, at start]{$i$} (1.5,0) to[in=-90,out=90] (1,1);
\draw[densely dashed] (1.5,-1) to[in=-90,out=90] (1,0) to[in=-90,out=90] (1.5,1);
\draw (2.5,-1) to[in=-90,out=90]  node[below, at start]{$j$} (2,0) to[in=-90,out=90] (2.5,1);
\node at (3,0) {=};
\draw (3.7,-1) -- (3.7,1) node[below, at start]{$i$};
\draw[densely dashed] (4.2,-1) to (4.2,1);
\draw (5.2,-1) -- (5.2,1) node[below, at start]{$j$};
\end{tikzpicture} \quad\quad \quad \quad
\scalefont{0.8}\begin{tikzpicture}[very thick,xscale=1,yscale=0.5,baseline]
\draw[densely dashed] (1,-1) to[in=-90,out=90] (1.5,0) to[in=-90,out=90] (1,1);
\draw (1.5,-1) to[in=-90,out=90] node[below, at start]{$i$} (1,0) to[in=-90,out=90] (1.5,1);
\draw (2,-1) to[in=-90,out=90]  node[below, at start]{$\tiny j$} (2.5,0) to[in=-90,out=90] (2,1);
\node at (3,0) {=};
\draw (4.2,-1) -- (4.2,1) node[below, at start]{$i$};
\draw[densely dashed] (3.7,-1) to (3.7,1);
\draw (5.2,-1) -- (5.2,1) node[below, at start]{$j$};
\end{tikzpicture}
\]
\end{enumerate} \begin{enumerate}[resume, label=(2.\arabic*)]
\item\label{rel6} On the other hand, in the case where $j= i-1$, we have the following.
\[
\scalefont{0.8}\begin{tikzpicture}[very thick,xscale=1,yscale=0.5,baseline]
\draw (1,-1) to[in=-90,out=90]  node[below, at start]{$i$} (1.5,0) to[in=-90,out=90] (1,1);
  \draw[dashed] (1.5,-1) to[in=-90,out=90] (1,0) to[in=-90,out=90] (1.5,1);
  \draw (2.5,-1) to[in=-90,out=90]  node[below, at start]{$\tiny i\!-\!1$} (2,0) to[in=-90,out=90] (2.5,1);
\node at (3,0) {=};
  \draw (3.7,-1) -- (3.7,1) node[below, at start]{$i$};
  \draw[dashed] (4.2,-1) to (4.2,1);
  \draw (5.2,-1) -- (5.2,1) node[below, at start]{$\tiny i\!-\!1$} node[midway,fill,inner sep=2.5pt,circle]{};
\node at (5.75,0) {$-$};

  \draw (6.2,-1) -- (6.2,1) node[below, at start]{$i$} node[midway,fill,inner sep=2.5pt,circle]{};
  \draw[dashed] (6.7,-1)-- (6.7,1);
  \draw (7.7,-1) -- (7.7,1) node[below, at start]{$\tiny i\!-\!1$};
\end{tikzpicture}
\]

\item\label{rel7} We also have the relation below, obtained by symmetry.
\[
\scalefont{0.8}\begin{tikzpicture}[very thick,xscale=1,yscale=0.5,baseline]
 \draw[dashed] (1,-1) to[in=-90,out=90] (1.5,0) to[in=-90,out=90] (1,1);
  \draw (1.5,-1) to[in=-90,out=90] node[below, at start]{$i$} (1,0) to[in=-90,out=90] (1.5,1);
  \draw (2,-1) to[in=-90,out=90]  node[below, at start]{$\tiny i\!-\!1$} (2.5,0) to[in=-90,out=90] (2,1);
\node at (3,0) {=};
  \draw[dashed] (3.7,-1) -- (3.7,1);
  \draw (4.2,-1) -- (4.2,1) node[below, at start]{$i$};
  \draw (5.2,-1) -- (5.2,1) node[below, at start]{$\tiny i\!-\!1$} node[midway,fill,inner sep=2.5pt,circle]{};
\node at (5.75,0) {$-$};

  \draw[dashed] (6.2,-1) -- (6.2,1);
  \draw (6.7,-1)-- (6.7,1) node[midway,fill,inner sep=2.5pt,circle]{} node[below, at start]{$i$};
  \draw (7.7,-1) -- (7.7,1) node[below, at start]{$\tiny i\!-\!1$};
\end{tikzpicture}
\]
\end{enumerate}
\begin{enumerate}[resume, label=(2.\arabic*)]
\item\label{rel8} Strands can move through crossings of solid strands freely.
\[
\scalefont{0.8}\begin{tikzpicture}[very thick,scale=0.5 ,baseline]
\draw (-2,-1) -- +(-2,2) node[below,at start]{$k$};
\draw (-4,-1) -- +(2,2) node[below,at start]{$i$};
\draw (-3,-1) .. controls (-4,0) ..  +(0,2)
node[below,at start]{$j$};
\node at (-1,0) {=};
\draw (2,-1) -- +(-2,2)
node[below,at start]{$k$};
\draw (0,-1) -- +(2,2)
node[below,at start]{$i$};
\draw (1,-1) .. controls (2,0) ..  +(0,2)
node[below,at start]{$j$};
\end{tikzpicture}
\]
\end{enumerate}
\indent Similarly, this holds for triple points involving ghosts, except for the following relations when $j=i-1$.
\begin{enumerate}[resume, label=(2.\arabic*)]  \Item\label{rel9}
\[
\scalefont{0.8}\begin{tikzpicture}[very thick,xscale=1,yscale=0.5,baseline]
\draw[dashed] (-2.6,-1) -- +(-.8,2);
\draw[dashed] (-3.4,-1) -- +(.8,2); 
\draw (-1.1,-1) -- +(-.8,2)
node[below,at start]{$\tiny j$};
\draw (-1.9,-1) -- +(.8,2)
node[below,at start]{$\tiny j$}; 
\draw (-3,-1) .. controls (-3.5,0) ..  +(0,2)
node[below,at start]{$i$};

\node at (-.75,0) {=};

\draw[dashed] (.4,-1) -- +(-.8,2);
\draw[dashed] (-.4,-1) -- +(.8,2);
\draw (1.9,-1) -- +(-.8,2)
node[below,at start]{$\tiny j$};
\draw (1.1,-1) -- +(.8,2)
node[below,at start]{$\tiny j$};
\draw (0,-1) .. controls (.5,0) ..  +(0,2)
node[below,at start]{$i$};

\node at (2.25,0) {$-$};

\draw (4.9,-1) -- +(0,2)
node[below,at start]{$\tiny j$};
\draw (4.1,-1) -- +(0,2)
node[below,at start]{$\tiny j$};
\draw[dashed] (3.4,-1) -- +(0,2);
\draw[dashed] (2.6,-1) -- +(0,2);
\draw (3,-1) -- +(0,2) node[below,at start]{$i$};
\end{tikzpicture}
\]
\Item\label{rel10}
\[
\scalefont{0.8}\begin{tikzpicture}[very thick,xscale=1,yscale=0.5,baseline]
\draw[dashed] (-3,-1) .. controls (-3.5,0) ..  +(0,2);  
\draw (-2.6,-1) -- +(-.8,2)
node[below,at start]{$i$};
\draw (-3.4,-1) -- +(.8,2)
node[below,at start]{$i$};
\draw (-1.5,-1) .. controls (-2,0) ..  +(0,2)
node[below,at start]{$\tiny j$};

\node at (-.75,0) {=};

\draw (0.4,-1) -- +(-.8,2)
node[below,at start]{$i$};
\draw (-.4,-1) -- +(.8,2)
node[below,at start]{$i$};
\draw[dashed] (0,-1) .. controls (.5,0) ..  +(0,2);
\draw (1.5,-1) .. controls (2,0) ..  +(0,2)
node[below,at start]{$\tiny j$};

\node at (2.25,0) {$+$};

\draw (3.4,-1) -- +(0,2)
node[below,at start]{$i$};
\draw (2.6,-1) -- +(0,2)
node[below,at start]{$i$}; 
\draw[dashed] (3,-1) -- +(0,2);
\draw (4.5,-1) -- +(0,2)
node[below,at start]{$\tiny j$};
\end{tikzpicture}
\]
\end{enumerate}
In the diagrams with crossings in \ref{rel9} and \ref{rel10}, we say that the solid (respectively ghost) strand bypasses the crossing of ghost strands (respectively solid strands).
The ghost strands may pass through red strands freely.
For $i\neq j$, the solid $i$-strands may pass through red $j$-strands freely.
If the red and solid strands have the same label, a dot is added to the solid strand when straightening.
Diagrammatically, these relations are given by
\begin{enumerate}[resume, label=(\thesection.\arabic*)] \Item\label{rel11}
\[
\scalefont{0.8}\begin{tikzpicture}[very thick,baseline,scale=0.5]
\draw[wei] (-2,-1) -- +(0,2)
node[below,at start]{$i$};
\draw (-1.2,-1)  .. controls (-2.8,0) ..  +(0,2)
node[below,at start]{$i$};

\node at (-.3,0) {=};

\draw (2.8,-1) -- +(0,2)
node[below,at start]{$i$};
\draw[wei] (1.2,-1) -- +(0,2)
node[below,at start]{$i$};
\fill (2.8,0) circle (3pt);

\draw (6.8,-1)  .. controls (5.2,0) ..  +(0,2)
node[below,at start]{$j$};
\draw[wei] (6,-1) -- +(0,2)
node[below,at start]{$i$};

\node at (7.7,0) {=};

\draw[wei] (9.2,-1) -- +(0,2)
node[below,at start]{$i$};
\draw (10.8,-1) -- +(0,2)
node[below,at start]{$j$};
\end{tikzpicture}
\]
\end{enumerate}
and their mirror images.
All solid crossings and dots can pass through red strands, with a correction term.
\begin{enumerate}[resume, label=(\thesection.\arabic*)]
\Item\label{rel12}
\[
\scalefont{0.8}\begin{tikzpicture}[very thick,baseline,scale=0.5]
\draw (-2,-1) .. controls (-4,0) .. +(-2,2)
node[at start,below]{$i$};
\draw (-4,-1) .. controls (-4,0) .. +(2,2)
node [at start,below]{$j$};
\draw[wei] (-3,-1) -- +(0,2)
node[at start,below]{$k$};

\node at (-1,0) {=};

\draw (2,-1) .. controls (2,0) .. +(-2,2)
node[at start,below]{$i$};
\draw (0,-1) .. controls (2,0) .. +(2,2)
node [at start,below]{$j$};
\draw[wei] (1,-1) -- +(0,2)
node[at start,below]{$k$};

\node at (2.8,0) {$+$};

\draw (7.5,-1) -- +(0,2)
node[at start,below]{$i$};
\draw (5.5,-1) -- +(0,2)
node [at start,below]{$j$};
\draw[wei] (6.5,-1) -- +(0,2)
node[at start,below]{$k$};
\node at (3.8,-.2){$\delta_{i,j,k}$};
\end{tikzpicture}
\]
\Item\label{rel13}
\[
\scalefont{0.8}\begin{tikzpicture}[scale=0.5,very thick,baseline=2cm]
\draw[wei] (-2,2) -- +(0,2);
\draw (-4,2) to[in=-90,out=90] (0,4);
\draw (-3,2) to[in=-150,out=90] (-1,4); 
\node at (1,3) {=};

\draw[wei] (4,2) -- +(0,2);
\draw (2,2) to[in=-90,out=90]  (6,4);
\draw (3,2) to[in=-90,out=30]  (5,4);

\draw (12,2) to[in=-90,out=90]  (8,4);
\draw (11,2) to[in=-90,out=150]  (9,4);

\draw[wei] (10,2) -- +(0,2);

\node at (13,3) {=};

\draw (18,2) to[in=-90,out=90] (14,4);
\draw (17,2) to[in=-30,out=90] (15,4);

\draw[wei] (16,2) -- +(0,2);
\end{tikzpicture}
\]

\Item\label{rel14}
\[
\scalefont{0.8}\begin{tikzpicture}[very thick,baseline,scale=0.5]
  \draw(-3,0) +(-1,-1) -- +(1,1);
  \draw[wei](-3,-1) -- (-3,1);
\fill (-3.5,-.5) circle (3pt);
\node at (-1,0) {=};
 \draw(1,0) +(-1,-1) -- +(1,1);
  \draw[wei](1,-1) -- (1,1);
\fill (1.5,.5) circle (3pt);

\draw[wei](5,-1) -- (5,1);
  \draw(1,0) +(5,-1) -- +(3,1);
\fill (5.5,-.5) circle (3pt);
\node at (7,0) {=};
 \draw[wei] (9,-1) -- (9,1); 
  \draw(5,0) +(5,-1) -- +(3,1);
\fill (8.5,.5) circle (3pt);
\end{tikzpicture}
\]
\end{enumerate}
Finally, we have the following non-local idempotent relation.
\begin{enumerate}[resume, label=(\thesection.\arabic*)]
\item\label{rel15}
Any idempotent in which a solid strand is $\ell n$ units to the left of the leftmost red strand is referred to as unsteady and set to be equal to zero.
Using relation \ref{rel1}, this is easily seen to be equivalent to the unsteady relation in \cite{bcs15,bs15,Webster}.
\end{enumerate}
\end{defn}

 The product $d_1 d_2$ of two diagrams $d_1,d_2 \in A(n, \theta, \kappa)$ is given by putting $d_1$ on top of $d_2$.
This product is defined to be $0$ unless the southern border of $d_1$ is given by the same loading as the northern border of $d_2$ with residues of strands matching in the obvious manner.

\begin{rmk}
The relations \ref{rel5}--\ref{rel7}, \ref{rel9}, and \ref{rel10} actually depict an entire region of width $\ell$ inside a given diagram (as we have drawn a solid strand and its ghost).
This has been done to highlight the residue of the ghost strand and the number of dots on the solid strand.
However, the local neighbourhood of interest is that containing the ghost $j$-strand and solid $i$-strand(s); 
the solid $j$-strands need not be in this local neighbourhood.
In particular, we can apply relations \ref{rel5}--\ref{rel7}, \ref{rel9}, and \ref{rel10} even if there is another strand lying between the solid $j$-strand and its ghost.
\end{rmk}

 The diagrammatic Cherednik  algebra is graded as follows:
\begin{itemize}
\item dots have degree 2;
\item the crossing of two strands has degree 0, unless they have the same label, in which case it has degree $-2$;
\item the crossing of a solid strand with label $i$ and a ghost has degree 1 if the ghost has label $i-1$ and 0 otherwise;
\item the crossing of a solid strand with a red strand has degree 0, unless they have the same label, in which case it has degree 1.
\end{itemize}
In other words,
\[
\deg\tikz[baseline,very thick,scale=1.5]{\draw
  (0,.3) -- (0,-.1) node[at end,below,scale=.8]{$i$}
  node[midway,circle,fill=black,inner
  sep=2pt]{};}=
  2 \qquad \deg\tikz[baseline,very thick,scale=1.5]
  {\draw (.2,.3) --
    (-.2,-.1) node[at end,below, scale=.8]{$i$}; \draw
    (.2,-.1) -- (-.2,.3) node[at start,below,scale=.8]{$j$};} =-2\delta_{i,j} \qquad  
  \deg\tikz[baseline,very thick,scale=1.5]{\draw[densely dashed] 
  (-.2,-.1)-- (.2,.3) node[at start,below, scale=.8]{$i$}; \draw
  (.2,-.1) -- (-.2,.3) node[at start,below,scale=.8]{$j$};} =\delta_{j,i+1} \qquad \deg\tikz[baseline,very thick,scale=1.5]{\draw (.2,.3) --
  (-.2,-.1) node[at end,below, scale=.8]{$i$}; \draw [densely dashed]
  (.2,-.1) -- (-.2,.3) node[at start,below,scale=.8]{$j$};} =\delta_{j,i-1}
\]
\[
\deg\tikz[baseline,very thick,scale=1.5]
{
\draw[wei] (0,-.1)-- (0,.3) node[at start,below, scale=.8]{$i$};
  \draw (.2,-.1)  node[below,scale=.8]{$j$};
   \draw (.2,-.1)to [in=-90,out=90]  (-.2,.3)   ;
  }
 = \delta_{i,j} 
\qquad
\deg\begin{tikzpicture}[baseline,very thick,scale=1.5]
 \draw[wei] (0,-.1) -- (0,.3) node[at start,below,scale=.8]{$j$};
   \draw (-.2,-.1)  node[below,scale=.8]{$i$};
   \draw (-.2,-.1)to [in=-90,out=90]  (.2,.3) ;
 \end{tikzpicture} =\delta_{j,i}.
\]

\subsection{A cellular basis of the diagrammatic Cherednik algebra}

Given any $\SSTT \in \SStd(\la,\mu)$, we have a $\theta$-diagram $C_\SSTT$ consisting of a frame with northern and southern distinguished points given by $\mathbf{I}_\mu^\theta$ and $\mathbf{I}_\la^\theta$, respectively, in which the $n$ solid strands each connecting a northern and southern point are drawn so that they trace out the bijection determined by $\SSTT$ in such a way that we use the minimal number of crossings without creating any bigons between pairs of strands or strands and ghosts.
This diagram is not unique up to isotopy (since we have not specified how to resolve triple points), but we can choose one such diagram arbitrarily.

Given a pair of semistandard tableaux of the same shape $(\SSTS,\SSTT)\in\SStd(\la,\mu)\times\SStd(\la,\nu)$, we have a diagram $C_{\SSTS \SSTT}=C_\SSTS C^\ast_\SSTT$ where $C^\ast_\SSTT$ is the diagram obtained from $C_\SSTT $ by flipping it through the horizontal axis.
Notice that there is a unique element $\SSTT^\la \in \SStd(\la,\la)$ and the corresponding basis element $1_\la = C_{\SSTT^\la,\SSTT^\la}$ is the idempotent in which all solid strands are vertical.
A degree function on tableaux is defined in \cite[Defintion 2.13]{Webster};
for our purposes it is enough to note that $\deg(\SSTT)=\deg(C_\SSTT)$ as we shall always work with the $\theta$-diagrams directly.
 The following theorem (see \cite[Sections 2.6 and 4.4]{Webster} for $R$ a field and \cite{bowman17} for $R$ an integral domain) provides a cellular basis, $\mathcal{C}$, of the algebra $A(n,\theta,\kappa)$.
 
\begin{thm}
\label{cellularitybreedscontempt}
The algebra $A(n,\theta,\kappa)$ is a graded cellular algebra with a highest weight theory.
The cellular basis is given by
\[
\mathcal{C}=\{C_{\SSTS \SSTT} \mid \SSTS \in \SStd(\la,\mu), \SSTT\in \SStd(\la,\nu), 
\la,\mu, \nu \in \mptn {\ell}n\}
\]
with respect to the $\theta$-dominance order on the set $\mptn {\ell}n$ and the anti-isomorphism given by flipping a diagram through the horizontal axis.
\end{thm}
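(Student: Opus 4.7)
The plan is to verify the six axioms of \cref{defn1} for the proposed basis $\mathcal{C}$ with respect to the $\theta$-dominance order. The engine of the proof is a straightening algorithm on $\theta$-diagrams, with \cref{215} controlling the error terms produced by the local relations \ref{rel1}--\ref{rel14}. Existence of the algorithm is the content of Webster's work cited immediately before the statement; I sketch how I would reconstruct it here.

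First I would show that $\mathcal{C}$ spans $A(n,\theta,\kappa)$. Given any $\theta$-diagram $d$, choose a generic horizontal slice; the positions of the solid strands there define a loading $\mathbf{i}_y$ through which $d$ factors. If $\mathbf{i}_y$ is isotopic to $\mathbf{i}_\mu$ for some $\mu\in\mptn{\ell}n$, then the top and bottom halves of $d$ are $\la$-tableaux (for a common shape $\la$ recoverable from the strands) which match residues along $\mathbf{i}_\mu$; repeatedly applying \ref{rel1}--\ref{rel14} to remove bigons, slide dots onto a single strand, and push strands into their leftmost semistandard positions, one rewrites $d$ as a sum of basis diagrams $C_{\SSTS\SSTT}$ plus correction terms. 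Each correction either reduces a fixed complexity measure (crossings + dots) and is dealt with by inner induction, or else contains a strand which has been moved out of its original region, producing a diagram that now factors through a strictly more dominant loading; by \cref{215} this diagram factors through $\mathbf{i}_\nu$ for some $\nu \vartriangleright \la$, so that we can close an outer induction on $\theta$-dominance. Diagrams whose slice loading is not of the form $\mathbf{i}_\mu$ are handled directly by \cref{215}, which moves us to such a form at the cost of increasing dominance.

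Linear independence cannot be established diagrammatically; here I would invoke Webster's faithful action of $A(n,\theta,\kappa)$ on the direct sum of its standard modules (equivalently, the polynomial representation), under which the $C_{\SSTS\SSTT}$ act by pairwise distinct rank-one operators on the cell module filtration. Given spanning and independence, the remaining axioms are quick: axiom (1) follows by inspection from the degree rules of \cref{grsubsec}, which are additive under vertical concatenation; axiom (3) is the spanning algorithm applied to $a\cdot C_\SSTS$, with independence of the coefficients from $\SSTT$ clear since multiplication acts only on the top half and the error terms lie in $A^{\vartriangleright\la}$ by the same \cref{215} mechanism; axiom (4) holds because the relations \ref{rel1}--\ref{rel15} are stable under the horizontal flip which realises $*$; axioms (5)--(6) are handled by taking $1_\la$ to be the all-vertical diagram at $\mathbf{i}_\la$, with $\SSTT^\la$ the unique semistandard $\la$-tableau of weight $\la$ (uniqueness follows from the condition $\SSTT(r,c,m)>\SSTT(r-1,c,m)+\g$ together with the strict ordering of the loading $\mathbf{i}_\la$), and orthogonality coming from the mismatch of northern and southern loadings between distinct $\mu,\nu$.

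The principal obstacle is controlling the error terms in the straightening step: the corrections produced by \ref{rel6}, \ref{rel7}, \ref{rel9}, \ref{rel10}, and \ref{rel12} must each be shown either to reduce the complexity measure or to factor through a strictly more dominant loading. The critical input is \cref{215}, which converts ``factors through some loading larger than $\mathbf{i}_\mu$'' into ``factors through $\mathbf{i}_\nu$ for some $\nu\vartriangleright\mu$'', and hence places the correction inside $A^{\vartriangleright\la}$. As this bookkeeping has already been executed for $R$ a field in \cite[Sections 2.6 and 4.4]{Webster} and extended to arbitrary integral domains in \cite{bowman17}, I would ultimately cite those sources rather than reproduce the full case analysis.
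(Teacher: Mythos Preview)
The paper does not prove this theorem at all: it is stated as a citation of \cite[Sections~2.6 and~4.4]{Webster} (for $R$ a field) and \cite{bowman17} (for $R$ an integral domain), with no accompanying argument. Your proposal correctly identifies those same references as the ultimate source and ends by deferring to them, so in that sense you land in exactly the same place as the paper.

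Your intermediate sketch of the straightening algorithm is broadly in the right spirit and is the kind of argument one finds in the cited sources, but two remarks are worth making. First, your description of linear independence via ``pairwise distinct rank-one operators on the cell module filtration'' is not quite how the cited proofs run; in \cite{bowman17} independence is established by an explicit upper-triangular action on a polynomial-type module, not by a rank-one characterisation. Second, in your spanning step the shape $\la$ is not ``recoverable from the strands'' in a diagram whose slice has loading $\mathbf{i}_\mu$; rather, one factors $d$ as a product of two half-diagrams through $1_\mu$ and then straightens each half separately, with the shape $\la$ only emerging after the straightening (and in general varying across the terms of the resulting linear combination). Neither point is fatal to your outline, but since the paper itself is content to cite the result, the cleanest fix is simply to do the same.
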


\begin{rmk}
We say that $\theta\in\ZZ^\ell$ is a {\sf well-separated weighting} for $A(n,\theta,\kappa)$ if $|\theta_j - \theta_i| > n\ell$ for all $1\leq i < j \leq \ell$.
In \cite[Theorem 3.9]{Webster2} it is shown that if $\theta\in\ZZ^\ell$ is well-separated, then the algebra $A(n,\theta,\kappa)$ is Morita equivalent to the corresponding cyclotomic $q$-Schur algebra.
\end{rmk}

\section{Diagonal cuts of multipartitions}

In this section, we identify subquotients of the diagrammatic Cherednik algebras which are isomorphic to the  product of subquotients of two smaller diagrammatic Cherednik algebras.
We first do this on the level of the indexing sets of representations (in \cref{sdafjhklfsadhjlkafs,sdafjhklfsadhjlkafs2}) and then lift this to the level of algebras (\cref{sdafjhklfsadhjlkafs23}).
We show that the graded decomposition numbers and higher extension groups are preserved under taking these subquotients and hence prove the main theorem stated in the introduction.

\subsection{Diagonal cuts and subquotients of the set of multipartitions}\label{sdafjhklfsadhjlkafs}
Fix $\theta \in \bbz^\ell$ and let $a \in \RR$ be such that the interval $(a-n\varepsilon,a+n\varepsilon)\subset \RR$ does not contain the $x$-coordinate of the loading of a node belonging to any $\ell$-multipartition of $n$.
Given $\la \in \mptn \ell n$, we define the set
\begin{equation}\label{cutineq}
I_a(\la)  = \{(r,c,m) \in [\la]_\theta  \mid a-\ell < \theta_m+\ell(r-c)    <a \}.
\end{equation}
Note that the set $I_a(\la) $ contains at most a single diagonal of boxes from each component of $\la$.  
Similarly, we define
\[
L_a(\la) = \{(r,c,m) \in [\la]_\theta \mid \theta_m+\ell(r-c) < a-\ell\}
\quad R_a(\la) = \{(r,c,m) \in [\la]_\theta \mid  a< \theta_m+\ell(r-c)\}.  
\]
That is, $L_a(\la)$ (respectively $R_a(\la)$) is the loading corresponding to all nodes in $[\la]_\theta$ which are strictly to the left (respectively right) of the line $x=a-\ell$ (respectively $x=a$).
 
\begin{defn}\label{cutdef}
Let $\theta$ be a weighting, $\la,\mu \in \mptn \ell n$ and $a \in \RR$.
We say that the pair $(\la, \mu)$  {\sf admits a $\theta$-diagonal cut at} $x=a$ if the nodes (whose top vertices are) in the region $(a-\ell, a)$ are common to both $\la$ and $\mu$, and the number of nodes strictly to the left or right of the line $x = a$ is the same in both multipartitions; in other words,
\[
I_a(\la) = I_a(\mu), \quad |L_a(\la)| = |L_a(\mu)|, \quad \text{and}  \quad 
|R_a(\la)| = |R_a(\mu)|.
\]
Given $\la \in \mptn \ell n$, we let 
\[
\Lambda_a(\la)=\{\mu   \in \mptn \ell n \mid (\la,\mu)\text{ admits a $\theta$-diagonal cut at $x=a$} \}.
\]
\end{defn}

\begin{rmk}
In \cref{cutdef}, we reference to a region $(a-\ell,a)$.  In our diagrams, we draw the vertical line through the point $x=a$, but we do not draw the vertical line through the point $x=a-\ell$.  Instead, we shade the
  boxes whose top vertices are within the region.  The important point is that the  boxes shaded in  $\lambda$ are also are common to $ \la^R$ and $\la^L$ and that the boxes 
  shaded in  $\mu$ are also are common to $ \mu^R$ and $\mu^L$.   
\end{rmk}

\begin{eg}\label{admitcuteg}
Let $e=3$, $\theta=0$ and $(\la,\mu)=((5,4,3,2,1),(4^3,1^3))$.
This pair admits a diagonal cut at $x = 1/2$.
This cut is slightly to the right of the top vertex of the box $(3,3)$, as can be seen in \cref{anex}.
In this case
\[
I_a(\la)=I_a(\mu)
= \{
(1,1,1),(2,2,1),(3,3,1)
\}
\]
and 
\[
|L_a(\la)|=|L_a(\mu)|=6,
\quad
|R_a(\la)|=|R_a(\mu)|=6.
\]
Note that the pair $(\la,\mu)$ also admits a horizontal cut after the third row, and a vertical cut after the third column (in the sense of \cite{donkinnote,fl03}). That is,
\begin{align*}
5+4+3 = \la_1+\la_2+\la_3 &=  \mu_1+\mu_2+\mu_3=  4+4+4,\\
5+4+3 = \la'_1+\la'_2+\la'_3 &=  \mu'_1+\mu'_2 +\mu'_3 =  6+3+3.
\end{align*}

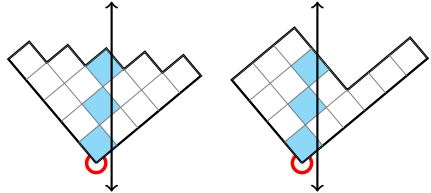
\begin{figure}[ht!]
\begin{center}\scalefont{0.6}
\begin{tikzpicture}[scale=1.8]
  \begin{scope}
{   \path (0,-1.3)++(40:0.2*0.5)++(130:0.2*0.5) coordinate (origin);  
   \draw[wei2]  (0,-1.3)++(40:0.2*0.5)++(130:0.2*0.5)  circle (2pt);
  \draw[thick] (origin) 
   --++(130:5*0.2)
   --++(40:1*0.2)
  --++(-50:1*0.2)	
  --++(40:1*0.2)
  --++(-50:1*0.2)
  --++(40:1*0.2)
  --++(-50:1*0.2)
  --++(40:1*0.2)
  --++(-50:1*0.2)
  --++(40:1*0.2)
  --++(-50:1*0.2)
     --++(220:5*0.2);
     \clip (origin) 
       --++(130:5*0.2)
   --++(40:1*0.2)
  --++(-50:1*0.2)	
  --++(40:1*0.2)
  --++(-50:1*0.2)
  --++(40:1*0.2)
  --++(-50:1*0.2)
  --++(40:1*0.2)
  --++(-50:1*0.2)
  --++(40:1*0.2)
  --++(-50:1*0.2)
     --++(220:5*0.2);
     \draw[fill,cyan!40] (origin) 
    --++(130:1*0.2)
    --++(40:1*0.2)
   --++(130:1*0.2)
    --++(40:1*0.2)
 --++(130:1*0.2)
    --++(40:1*0.2)
--++(-50:1*0.2)
    --++(220:1*0.2)--++(-50:1*0.2)
    --++(220:1*0.2)--++(-50:1*0.2)
    --++(220:1*0.2);  \draw[thick] (origin) 
   --++(130:5*0.2)
   --++(40:1*0.2)
  --++(-50:1*0.2)	
  --++(40:1*0.2)
  --++(-50:1*0.2)
  --++(40:1*0.2)
  --++(-50:1*0.2)
  --++(40:1*0.2)
  --++(-50:1*0.2)
  --++(40:1*0.2)
  --++(-50:1*0.2)
     --++(220:5*0.2);
   \path (40:1cm) coordinate (A1);
  \path (40:2cm) coordinate (A2);
  \path (40:3cm) coordinate (A3);
  \path (40:4cm) coordinate (A4);
  \path (130:1cm) coordinate (B1);
  \path (130:2cm) coordinate (B2);
  \path (130:3cm) coordinate (B3);
  \path (130:4cm) coordinate (B4);
  \path (A1) ++(130:3cm) coordinate (C1);
  \path (A2) ++(130:2cm) coordinate (C2);
  \path (A3) ++(130:1cm) coordinate (C3);
  \foreach \i in {1,...,19}
  {
    \path (origin)++(40:0.2*\i cm)  coordinate (a\i);
    \path (origin)++(130:0.2*\i cm)  coordinate (b\i);
    \path (a\i)++(130:4cm) coordinate (ca\i);
    \path (b\i)++(40:4cm) coordinate (cb\i);
    \draw[thin,gray] (a\i) -- (ca\i)  (b\i) -- (cb\i); } 
  }  \end{scope}

  \path (origin)++(40:0.2*0.4 cm)++(-50:0.2*0.4 cm)++(-90:0.2cm)
 coordinate (XXXX);
 \path (XXXX)++(90:1.4cm) coordinate (XXXXY);
 
   \draw[<->,thick,black] 
    (XXXX)
    --
    (XXXXY);
\end{tikzpicture}        
\quad
\begin{tikzpicture}[scale=1.8]
  \begin{scope}
{   \path (0,-1.3)++(40:0.2*0.5)++(130:0.2*0.5) coordinate (origin);  
   \draw[wei2]  (0,-1.3)++(40:0.2*0.5)++(130:0.2*0.5)  circle (2pt);
  \draw[thick] (origin) 
   --++(130:4*0.2)
   --++(40:3*0.2)
  --++(-50:3*0.2)	
  --++(40:3*0.2)
  --++(-50:1*0.2)
      --++(220:6*0.2);
     \clip (origin) 
     --++(130:4*0.2)
   --++(40:3*0.2)
  --++(-50:3*0.2)	
  --++(40:3*0.2)
  --++(-50:1*0.2)
      --++(220:6*0.2);       \draw[fill,cyan!40] (origin) 
    --++(130:1*0.2)
    --++(40:1*0.2)
   --++(130:1*0.2)
    --++(40:1*0.2)
 --++(130:1*0.2)
    --++(40:1*0.2)
--++(-50:1*0.2)
    --++(220:1*0.2)--++(-50:1*0.2)
    --++(220:1*0.2)--++(-50:1*0.2)
    --++(220:1*0.2);
  \draw[thick] (origin) 
   --++(130:4*0.2)
   --++(40:3*0.2)
  --++(-50:3*0.2)	
  --++(40:3*0.2)
  --++(-50:1*0.2)
      --++(220:6*0.2);
   \path (40:1cm) coordinate (A1);
  \path (40:2cm) coordinate (A2);
  \path (40:3cm) coordinate (A3);
  \path (40:4cm) coordinate (A4);
  \path (130:1cm) coordinate (B1);
  \path (130:2cm) coordinate (B2);
  \path (130:3cm) coordinate (B3);
  \path (130:4cm) coordinate (B4);
  \path (A1) ++(130:3cm) coordinate (C1);
  \path (A2) ++(130:2cm) coordinate (C2);
  \path (A3) ++(130:1cm) coordinate (C3);
  \foreach \i in {1,...,19}
  {
    \path (origin)++(40:0.2*\i cm)  coordinate (a\i);
    \path (origin)++(130:0.2*\i cm)  coordinate (b\i);
    \path (a\i)++(130:4cm) coordinate (ca\i);
    \path (b\i)++(40:4cm) coordinate (cb\i);
    \draw[thin,gray] (a\i) -- (ca\i)  (b\i) -- (cb\i); } 
  }  \end{scope}

  \path (origin)++(40:0.2*0.4 cm)++(-50:0.2*0.4 cm)++(-90:0.2cm)
 coordinate (XXXX);
 \path (XXXX)++(90:1.4cm) coordinate (XXXXY);
 
   \draw[<->,thick,black] 
    (XXXX)
    --
    (XXXXY);
\end{tikzpicture}
\quad
\end{center}
\caption{Fix $e=3$ and $\theta = 0$. The partitions $(5,4,3,2,1)$ and $(4^3,1^3)$ admit a diagonal cut at $x = 1/2$.}
\label{anex}
\end{figure}
\end{eg}

\begin{rmk}
Let $(\la, \mu)$ be a pair of partitions such that $\la \trianglerighteq \mu$.  
The above example illustrates that in level 1, the pair $(\la, \mu)$ admits a $\theta$-diagonal cut if and only if they admit a horizontal cut, if and only if they admit a vertical cut (in the sense of \cite{donkinnote,fl03}).
If $\la \ndom \mu$ (and more generally, if we have $\la \ndom_\theta \mu$ for $\la, \mu \in \mptn \ell n$), the corresponding decomposition numbers and extension groups are zero, which is why we are not concerned with this case here.
\end{rmk}

\begin{lem}\label{middleman}
The set $\Lambda_a(\la)$ is closed under the $\theta$-dominance order.
That is, if $\mu, \mu' \in \Lambda_a(\la)$ and $\nu \in \mptn \ell n$ are such that $\mu \doms_\theta \nu \doms_\theta \mu'$, then $\nu \in \Lambda_a(\la)$.
\end{lem}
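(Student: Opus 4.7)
\medskip

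The plan is to reduce the conclusion $\nu \in \Lambda_a(n,\theta,\kappa)$ to its three defining conditions ($I_a(\nu) = I_a(\la)$, $|L_a(\nu)| = |L_a(\la)|$, $|R_a(\nu)| = |R_a(\la)|$) and to verify them by unpacking the per-residue dominance. For each $\pi \in \{\mu, \nu, \mu'\}$, each residue $r \in \ZZ/e\ZZ$, and each $y \in \RR$, write $f_\pi^r(y) := |\{(x,r) \in \mathbf{i}_\pi : x < y\}|$. The hypothesis $\mu \doms_\theta \nu \doms_\theta \mu'$ unfolds to
\[
f_\mu^r(y) \;\geq\; f_\nu^r(y) \;\geq\; f_{\mu'}^r(y) \qquad \text{for all } r \text{ and all } y,
\]
and our task is to squeeze $\nu$ into agreement with $\la$ by choosing the correct values of $y$.

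First I would evaluate at $y = a - \ell$ (perturbing infinitesimally if some node sits exactly there, which is a harmless genericity point: the assumption on $a$ prevents nodes from lying in $(a - n\varepsilon, a + n\varepsilon)$, so the relevant $f_\pi^r$ is locally constant near $y = a$, and by a similar perturbation we may arrange the same at $y=a-\ell$). Since $\mu, \mu' \in \Lambda_a$ both have all of their non-$L_a$ nodes at $x$-coordinates $\geq a - \ell$, we obtain $f_\mu^r(a-\ell) = L^r(\mu)$ and $f_{\mu'}^r(a-\ell) = L^r(\mu')$, where $L^r(\pi) := |\{(x,r) \in L_a(\pi)\}|$. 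The dominance chain gives $L^r(\mu) \geq L^r(\nu) \geq L^r(\mu')$ for every $r$, and summing over $r$ together with the equality $|L_a(\mu)| = |L_a(\mu')|$ forces these inequalities to be equalities for every residue. In particular $|L_a(\nu)| = |L_a(\la)|$ and $L^r(\mu) = L^r(\nu) = L^r(\mu')$ for all $r$.

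Next, for every $b \in (a - \ell, a)$, evaluate the dominance inequality at $y = b$. Writing $I^r_b(\pi) := |\{(x,r) \in I_a(\pi) : x < b\}|$ one has $f_\pi^r(b) = L^r(\pi) + I^r_b(\pi)$, so the chain becomes
\[
L^r(\mu) + I^r_b(\mu) \;\geq\; L^r(\nu) + I^r_b(\nu) \;\geq\; L^r(\mu') + I^r_b(\mu').
\]
Cancelling the $L^r$ terms (equal by Step~1) and invoking $I^r_b(\mu) = I^r_b(\mu')$ (which is immediate from the charged-loading equality $I_a(\mu) = I_a(\mu')$) forces $I^r_b(\nu) = I^r_b(\la)$ for every residue and every $b \in (a - \ell, a)$. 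But a finite charged loading contained in $(a-\ell, a) \times \ZZ/e\ZZ$ is determined by its cumulative residue-counting function, so $I_a(\nu) = I_a(\la)$ as charged loadings. Finally, $|R_a(\nu)| = n - |L_a(\nu)| - |I_a(\nu)| = n - |L_a(\la)| - |I_a(\la)| = |R_a(\la)|$, completing the verification.

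The argument is essentially a squeeze-and-take-derivative of distribution functions, and I do not anticipate any serious obstacle beyond the minor boundary issue of making sure that $y = a - \ell$ and $y = a$ can be used as evaluation points without falling on a node; this is handled by the genericity of $a$ together with the freedom to perturb by an infinitesimal amount, both of which are already built into the setup preceding \cref{cutdef}.
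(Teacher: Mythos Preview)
Your proof is correct and follows essentially the same squeeze argument as the paper's. The only organisational difference is that you work per residue throughout (which is natural, since $\theta$-dominance is defined per residue), while the paper sums over residues to squeeze $|L_a(\nu)|$ and $|R_a(\nu)|$ directly and then recovers $I_a(\nu)=I_a(\la)$ by a node-by-node argument on the interval $(a-\ell,a)$; both routes amount to the same cumulative-count comparison.
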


\begin{proof}
By the definition of the $\theta$-dominance order, the number of nodes to the left of the  point $x=a-\ell$ for each of the three multipartitions is bounded as follows,  
\begin{equation}\label{anequationyes1}
|L_a(\mu)| \geq
|L_a(\nu)| \geq
|L_a(\mu')|.
\end{equation}
Recall (by the definition of $\Lambda_a(\la)$) that $|L_a(\mu')|=|L_a(\la)| = |L_a(\mu)|$; therefore $|L_a(\la)|=|L_a(\nu)|$, as required.
Similarly, we have that the number of nodes to the right of $x = a$ for each of the multipartitions is bounded as follows,
\begin{equation}\label{anequationyes2}
|R_{a}(\mu')| \geq |R_{a}(\nu)| \geq |R_{a}(\mu)|.
\end{equation}
Therefore (as above) we have that  $|R_a(\nu)| = |R_a(\la)|$, as required.
Finally, it is immediate from the definition that $I_a(\mu)
= I_a(\mu')=I_a(\la)$.
By \cref{anequationyes1,anequationyes2} and our assumption that $\mu \trianglerighteq \nu \trianglerighteq \mu'$, we deduce $I_a(\mu)\trianglerighteq I_a(\nu)\trianglerighteq  I_a(\mu')$, therefore $I_a(\nu)=I_a(\la)$ as required.
\end{proof}

Given a subset $Q\subseteq \Lambda$, we let $E(Q)$ (respectively $F(Q)$) denote the saturated (respectively cosaturated) closure of $Q$ in $\Lambda$; that is, the smallest saturated (respectively cosaturated) set containing $Q$.
By \cref{middleman}, we immediately deduce the following corollary.

\begin{cor}
We have that $\Lambda_a(\la) = E(\Lambda_a(\la)) \cap F(\Lambda_a(\la))$.
\end{cor}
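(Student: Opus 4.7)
The plan is to reduce the equality to Lemma \ref{middleman} (the ``middleman'' property) by unpacking what $E(Q)$ and $F(Q)$ look like concretely for an arbitrary subset $Q\subseteq\Lambda$.

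First I would verify the forward inclusion $\Lambda_a(n,\theta,\kappa)\subseteq E(\Lambda_a(n,\theta,\kappa))\cap F(\Lambda_a(n,\theta,\kappa))$. This is tautological since any set is contained in both its saturated closure and its cosaturated closure.

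Next I would establish explicit descriptions of the closures: for any $Q\subseteq \mptn \ell n$,
\[
E(Q)=\{\nu\in\mptn\ell n\mid \nu\domby_\theta \mu\text{ for some }\mu\in Q\},\qquad F(Q)=\{\nu\in\mptn\ell n\mid \nu\dom_\theta \mu\text{ for some }\mu\in Q\}.
\]
Both descriptions are straightforward: the right-hand sides are visibly saturated (respectively cosaturated, via the complement), they contain $Q$, and any saturated (respectively cosaturated) set containing $Q$ must contain them.

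For the reverse inclusion, let $\nu \in E(\Lambda_a(n,\theta,\kappa))\cap F(\Lambda_a(n,\theta,\kappa))$. By the explicit descriptions there exist $\mu,\mu'\in \Lambda_a(n,\theta,\kappa)$ with $\mu'\domby_\theta \nu\domby_\theta \mu$. If $\nu=\mu$ or $\nu=\mu'$ there is nothing to prove; otherwise $\mu\doms_\theta \nu\doms_\theta \mu'$ and \cref{middleman} applies directly to give $\nu\in\Lambda_a(n,\theta,\kappa)$.

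There is no real obstacle here: this corollary is purely an order-theoretic repackaging of \cref{middleman}, once one notes that the saturated and cosaturated closures are nothing but the downward and upward closures under the $\theta$-dominance order. The only thing to be slightly careful about is the handling of the boundary cases $\nu=\mu$ and $\nu=\mu'$, since the lemma is stated with strict inequalities $\doms_\theta$.
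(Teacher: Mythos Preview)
Your proposal is correct and is essentially the same as the paper's approach: the paper gives no proof beyond the phrase ``By \cref{middleman}, we immediately deduce the following corollary,'' and what you have written is precisely the unpacking of that deduction. Your explicit identification of $E(Q)$ and $F(Q)$ with the downward and upward closures, and the care with the boundary cases $\nu=\mu$ and $\nu=\mu'$, are exactly the details the paper leaves implicit.
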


\begin{rmk}
In what follows, when the choice of $\la\in \mptn \ell n$, $a\in \RR$, and $\theta\in \ZZ^\ell$ is clear, we shall let $E:= E(\Lambda_a(\la))$ and $F:= F(\Lambda_a(\la))$.
\end{rmk}

\subsection{A bijection between partially ordered sets}\label{sdafjhklfsadhjlkafs2}
 
We now construct the bijection between partially ordered sets which will underly the isomorphism of subquotient algebras in \cref{sdafjhklfsadhjlkafs23}.

\begin{defn} Given $\la \in \mptn \ell n$,  we define the left-hand side of $\la$ (with respect to a $\theta$-diagonal cut at $x=a$), denoted  $\la^L$, to be the smallest $\ell$-multipartition such that
\[
I_a(\la) \cup L_a(\la)\subseteq [\la^L]_\theta.
\]
Similarly, we define the right-hand side of $\la$, denoted  $\la^R$, 
to be the smallest $\ell$-multipartition such that  
\[
I_a(\la) \cup R_a(\la)\subseteq [\la^L]_\theta. 
\]
We let  $n_L = |\la^L|$ and $n_R = |\la^R|$.  We extend the notation of \cref{cutdef} as follows,
\[
\Lambda^L_a(\la^L)=  \{\mu   \in \mptn \ell {n_L} \mid (\la^L,\mu)\text{ admit a $\theta$-diagonal cut at }x=a \}
\]
and 
\[
\Lambda^R_a(\la^R)=  \{\mu   \in \mptn \ell {n_R} \mid (\la^R,\mu)\text{ admit a $\theta$-diagonal cut at }x=a \}. 
\]
\end{defn}

Informally, we may think of the line $x=a$ as determining the left- and right-hand sides of a multipartition in the obvious way.  
The catch is that these pieces won't be multipartitions, and so we add in as few nodes as possible to make them multipartitions.

\begin{prop}\label{indexbij}
Given $\la \in \mptn \ell n$, we have a bijection
\begin{align*}
\Lambda_a(\la) &\overset{\sim}{\longrightarrow} \Lambda^L_a(\la^{L}) \times\Lambda^R_a(\la^{R}) \\
\mu &\longmapsto (\mu^L, \mu^R).
\end{align*}
\end{prop}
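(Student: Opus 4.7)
The plan is to establish the bijection in three stages: a key lemma about how closure of $L_a\cup I_a$ produces right-region nodes, then well-definedness together with injectivity, and finally surjectivity via an explicit inverse construction.

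The crucial technical input is a \textbf{Key Lemma}: for any $\nu\in\mptn\ell n$, the right-region part of $\nu^L$ depends only on $I_a(\nu)$. Precisely, a node $(r,c,m)$ with $x$-coordinate greater than $a$ lies in $\nu^L$ if and only if there exists $(r'',c'',m)\in I_a(\nu)$ with $r''\ge r$ and $c''\ge c$. The ``$\Leftarrow$'' direction is immediate from the definition of $\nu^L$ as a closure. For ``$\Rightarrow$'', first note $\nu^L\subseteq\nu$ (since $\nu$ is itself a multipartition containing $L_a(\nu)\cup I_a(\nu)$), so $L_a(\nu^L)=L_a(\nu)$ and $I_a(\nu^L)=I_a(\nu)$. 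A given $R_a$-region node $(r,c,m)\in\nu^L$ is witnessed by some $(r',c',m)\in L_a(\nu)\cup I_a(\nu)$ with $r'\ge r$, $c'\ge c$; if the witness is in $I_a(\nu)$ we are done. If instead $(r',c',m)\in L_a(\nu)$, I replace it by the row-$r$ $I_a$-node $(r,c_I^*(r,m),m)$, where $c_I^*(r,m)$ denotes the largest column index $c''$ with $x(r,c'',m)>a-\ell$. Using the formula $x=\theta_m+\ell(r-c)+(r+c)\varepsilon$, one checks the monotonicity $c_I^*(r',m)=c_I^*(r,m)+(r'-r)$ for generic $a$; combined with $c'>c_I^*(r',m)$ and the partition inequality $\nu_r^{(m)}\ge\nu_{r'}^{(m)}\ge c'$, this forces $\nu_r^{(m)}\ge c_I^*(r,m)+1>c_R^*(r,m)\ge c$, so $(r,c_I^*(r,m),m)$ is an $I_a(\nu)$-node dominating $(r,c,m)$. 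The main delicate piece is keeping the thresholds $c_R^*$ and $c_I^*$ and the $\varepsilon$-tilt straight; I expect this is the most finicky part of the proof but not deep.

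Granting the Key Lemma, well-definedness of $\mu\mapsto(\mu^L,\mu^R)$ follows quickly. For $\mu\in\Lambda_a(n,\theta,\kappa)$, the identities $I_a(\mu^L)=I_a(\mu)=I_a(\la)=I_a(\la^L)$ and $L_a(\mu^L)=L_a(\mu)$ (whence $|L_a(\mu^L)|=|L_a(\la^L)|$) are free, while the Key Lemma applied to both $\mu$ and $\la$ produces the exact equality $R_a(\mu^L)=R_a(\la^L)$ since both right-region parts are determined by the common middle $I_a(\mu)=I_a(\la)$. This yields $|\mu^L|=n_L$ and verifies the cut conditions for $(\la^L,\mu^L)$, so $\mu^L\in\Lambda^L_a(n_L,\theta,\kappa)$; the argument for $\mu^R$ is symmetric. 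Injectivity is then immediate: $L_a(\mu)=L_a(\mu^L)$, $I_a(\mu)=I_a(\mu^L)=I_a(\mu^R)$ and $R_a(\mu)=R_a(\mu^R)$ recover $\mu$ completely.

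For surjectivity, given $\tau\in\Lambda^L_a(n_L,\theta,\kappa)$ and $\sigma\in\Lambda^R_a(n_R,\theta,\kappa)$, the cut conditions force $I_a(\tau)=I_a(\la^L)=I_a(\la)=I_a(\la^R)=I_a(\sigma)$, so I define $\mu$ as the node set $L_a(\tau)\cup I_a(\tau)\cup R_a(\sigma)$ and verify $\mu$ is a multipartition with $\mu^L=\tau$, $\mu^R=\sigma$. The main check is that the row lengths of $\mu^{(m)}$ are contiguous and non-increasing, which I handle row-by-row. For fixed $r,m$, there are two cases: either \emph{Case A}, $\tau_r^{(m)}\le c_R^*(r,m)$, which, via $I_a(\tau)=I_a(\sigma)$ forcing matching row-occupancy in the $I_a$-region, implies $\sigma_r^{(m)}\le c_R^*(r,m)$ and gives $\mu_r^{(m)}=\sigma_r^{(m)}$; or \emph{Case B}, $\tau_r^{(m)}>c_R^*(r,m)$, in which case matching $I_a$-nodes in $\sigma$ force $\sigma_r^{(m)}\ge c_R^*(r,m)+1$, so $\sigma$ provides the full $R_a$-column interval in row $r$ and $\mu_r^{(m)}=\tau_r^{(m)}$ is contiguous. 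Strict monotonicity $c_R^*(r+1,m)=c_R^*(r,m)+1$ shows that Case A rows follow all Case B rows in $\tau^{(m)}$, and at the transition row one has $\tau_r^{(m)}\ge c_R^*(r,m)+1=c_R^*(r+1,m)\ge\sigma_{r+1}^{(m)}$, securing the required row monotonicity. Finally, $L_a(\mu)=L_a(\tau)$, $I_a(\mu)=I_a(\tau)$, $R_a(\mu)=R_a(\sigma)$ immediately give $\mu\in\Lambda_a(n,\theta,\kappa)$, and a second application of the Key Lemma (now to $\mu$) gives $\mu^L=\tau$ and $\mu^R=\sigma$, completing the bijection.
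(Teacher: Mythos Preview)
Your argument is correct, and it supplies far more detail than the paper does: the paper's entire proof is ``This is clear from the definitions.'' The authors clearly have in mind the explicit description recorded immediately afterwards in Remark~3.8 --- namely that if $(r,c,m)$ is the highest $I_a$-node in component $m$, then $(\mu^L)^{(m)}=(\mu_1,\dots,\mu_r)$ and $(\mu^R)^{(m)}=(c^r,\mu_{r+1},\dots)$ --- from which the bijection is visibly a splitting of row data at row $r_0$ with a common rectangle $(c^{r_0})$.

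Your Key Lemma is precisely the abstract statement underlying that explicit formula: the $R_a$-part of $\nu^L$ is the triangle below the $I_a$-diagonal and above row $r_0$, which depends only on $I_a(\nu)$. Once one has this (and its mirror for $\nu^R$), the rest of your argument is the same verification the paper leaves implicit. One simplification you could make: rather than tracking the thresholds $c_I^*$ and $c_R^*$ and the $\varepsilon$-tilt, you can argue directly from Remark~3.8's formula. In particular, for surjectivity, the constraints $I_a(\tau)=I_a(\la)$ and $|R_a(\tau)|=|R_a(\la^L)|$ force $\tau^{(m)}$ to have exactly $r_0$ rows, each of length $\ge c_I^*(r,m)$; dually $I_a(\sigma)=I_a(\la)$ and $|L_a(\sigma)|=|L_a(\la^R)|$ force $\sigma_r^{(m)}=c_0$ for $r\le r_0$. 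Then $\mu^{(m)}:=(\tau_1,\dots,\tau_{r_0},\sigma_{r_0+1},\sigma_{r_0+2},\dots)$ is immediately a partition since $\tau_{r_0}\ge c_0\ge\sigma_{r_0+1}$, and the inverse property is read off. This bypasses the Case~A/B split and most of the threshold bookkeeping.
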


\begin{proof}
This is clear from the definitions.
\end{proof}

\begin{rmk}\label{explicitpieces}
Given $\la \in \mptn \ell n$ and  $\mu \in \Lambda_a (\la)$, one can write down the $\ell$-multipartitions $\mu^L$ and $\mu^R$ explicitly as follows.
Suppose $\mu^{(m)} = (\mu_1, \mu_2, \dots)$.
If every node in $[\mu^{(m)}]_\theta$ lies to the left of $a-\ell$ (respectively right of $a$), then the $m$th component of $\mu^L$ (respectively $\mu^{R}$) is just $\mu^{(m)}$ itself.

Otherwise, at least one node in $[\mu^{(m)}]_\theta$ has $x$-coordinate in $(a-\ell, a)$.
Suppose the highest such node is $(r,c,m) \in [\mu^{(m)}]_\theta$.
Then the $m$th components of $\mu^L$  and $\mu^R$ are
\[
(\mu^L)^{(m)}=(\mu_1, \mu_2,\dots, \mu_r) \quad \text{and} \quad  (\mu^R)^{(m)}=(c^r,\mu_{r+1}, \mu_{r+2},\dots).
\]
\end{rmk}

\begin{eg}
Continuing \cref{admitcuteg}, recall that $e=3$, $\theta=0$, $\la = (5,4,3,2,1)$ and $\mu = (4^3,1^3)$. Making a diagonal cut at $x = 1/2$ yields
\[
\la^L = (5,4,3), \quad \la^R = (3^3,2,1), \quad \mu^L = (4^3), \quad \mu^R = (3^3,1^3).
\]
These are depicted in \cref{cuteg}. We note that the only difference between our diagonal cuts and the more classical horizontal and vertical cuts is that we have an `extra rectangle' of boxes in the form of a partition $(r^c)$ common to {\em both} $\la^L$ and $\la^R$.
\begin{figure}[ht!] 
\begin{tikzpicture}[scale=1.5]
  \begin{scope}
{   \path (0,-1.3)++(40:0.2*0.5)++(130:0.2*0.5) coordinate (origin);  
   \draw[wei2]  (0,-1.3)++(40:0.2*0.5)++(130:0.2*0.5)  circle (2pt);
  \draw[thick] (origin) 
   --++(130:5*0.2)
   --++(40:1*0.2)
  --++(-50:1*0.2)	
  --++(40:1*0.2)
  --++(-50:1*0.2)
  --++(40:1*0.2)
  --++(-50:3*0.2)
     --++(220:3*0.2);
     \clip (origin) 
       --++(130:5*0.2)
   --++(40:1*0.2)
  --++(-50:1*0.2)	
  --++(40:1*0.2)
  --++(-50:1*0.2)
  --++(40:1*0.2)
  --++(-50:3*0.2)
     --++(220:3*0.2);      \draw[fill,cyan!40] (origin) 
    --++(130:1*0.2)
    --++(40:1*0.2)
   --++(130:1*0.2)
    --++(40:1*0.2)
 --++(130:1*0.2)
    --++(40:1*0.2)
--++(-50:1*0.2)
    --++(220:1*0.2)--++(-50:1*0.2)
    --++(220:1*0.2)--++(-50:1*0.2)
    --++(220:1*0.2);
   \path (40:1cm) coordinate (A1);
  \path (40:2cm) coordinate (A2);
  \path (40:3cm) coordinate (A3);
  \path (40:4cm) coordinate (A4);
  \path (130:1cm) coordinate (B1);
  \path (130:2cm) coordinate (B2);
  \path (130:3cm) coordinate (B3);
  \path (130:4cm) coordinate (B4);
  \path (A1) ++(130:3cm) coordinate (C1);
  \path (A2) ++(130:2cm) coordinate (C2);
  \path (A3) ++(130:1cm) coordinate (C3);
  \foreach \i in {1,...,19}
  {
    \path (origin)++(40:0.2*\i cm)  coordinate (a\i);
    \path (origin)++(130:0.2*\i cm)  coordinate (b\i);
    \path (a\i)++(130:4cm) coordinate (ca\i);
    \path (b\i)++(40:4cm) coordinate (cb\i);
    \draw[thin,gray] (a\i) -- (ca\i)  (b\i) -- (cb\i); } 
  }  \end{scope}

  \path (origin)++(40:0.2*0.4 cm)++(-50:0.2*0.4 cm)++(-90:0.2cm)
 coordinate (XXXX);
 \path (XXXX)++(90:1.4cm) coordinate (XXXXY);
 
   \draw[<->,thick,black] 
    (XXXX)
    --
    (XXXXY);
\end{tikzpicture}
\quad
\begin{tikzpicture}[scale=1.5]
  \begin{scope}
{   \path (0,-1.3)++(40:0.2*0.5)++(130:0.2*0.5) coordinate (origin);  
   \draw[wei2]  (0,-1.3)++(40:0.2*0.5)++(130:0.2*0.5)  circle (2pt);
  \draw[thick] (origin) 
   --++(130:4*0.2)
   --++(40:3*0.2)
  --++(-50:4*0.2)	
      --++(220:3*0.2);
     \clip (origin) 
     --++(130:4*0.2)
   --++(40:3*0.2)
  --++(-50:4*0.2)	
      --++(220:3*0.2);      \draw[fill,cyan!40] (origin) 
    --++(130:1*0.2)
    --++(40:1*0.2)
   --++(130:1*0.2)
    --++(40:1*0.2)
 --++(130:1*0.2)
    --++(40:1*0.2)
--++(-50:1*0.2)
    --++(220:1*0.2)--++(-50:1*0.2)
    --++(220:1*0.2)--++(-50:1*0.2)
    --++(220:1*0.2);

   \path (40:1cm) coordinate (A1);
  \path (40:2cm) coordinate (A2);
  \path (40:3cm) coordinate (A3);
  \path (40:4cm) coordinate (A4);
  \path (130:1cm) coordinate (B1);
  \path (130:2cm) coordinate (B2);
  \path (130:3cm) coordinate (B3);
  \path (130:4cm) coordinate (B4);
  \path (A1) ++(130:3cm) coordinate (C1);
  \path (A2) ++(130:2cm) coordinate (C2);
  \path (A3) ++(130:1cm) coordinate (C3);
  \foreach \i in {1,...,19}
  {
    \path (origin)++(40:0.2*\i cm)  coordinate (a\i);
    \path (origin)++(130:0.2*\i cm)  coordinate (b\i);
    \path (a\i)++(130:4cm) coordinate (ca\i);
    \path (b\i)++(40:4cm) coordinate (cb\i);
    \draw[thin,gray] (a\i) -- (ca\i)  (b\i) -- (cb\i); } 
  }  \end{scope}

  \path (origin)++(40:0.2*0.4 cm)++(-50:0.2*0.4 cm)++(-90:0.2cm)
 coordinate (XXXX);
 \path (XXXX)++(90:1.4cm) coordinate (XXXXY);
 
   \draw[<->,thick,black] 
    (XXXX)
    --
    (XXXXY);
\end{tikzpicture}
\quad \quad \quad \quad
\begin{tikzpicture}[scale=1.5]
\begin{scope}
{   \path (0,-1.3)++(40:0.2*0.5)++(130:0.2*0.5) coordinate (origin);  
   \draw[wei2]  (0,-1.3)++(40:0.2*0.5)++(130:0.2*0.5)  circle (2pt);
  \draw[thick] (origin) 
   --++(130:3*0.2)
   --++(40:3*0.2)
  --++(-50:1*0.2)
  --++(40:1*0.2)
  --++(-50:1*0.2)
  --++(40:1*0.2)
  --++(-50:1*0.2)
     --++(220:5*0.2);
     \clip (origin) 
     --++(130:3*0.2)
   --++(40:3*0.2)
  --++(-50:1*0.2)
  --++(40:1*0.2)
  --++(-50:1*0.2)
  --++(40:1*0.2)
  --++(-50:1*0.2)
     --++(220:5*0.2);      \draw[fill,cyan!40] (origin) 
    --++(130:1*0.2)
    --++(40:1*0.2)
   --++(130:1*0.2)
    --++(40:1*0.2)
 --++(130:1*0.2)
    --++(40:1*0.2)
--++(-50:1*0.2)
    --++(220:1*0.2)--++(-50:1*0.2)
    --++(220:1*0.2)--++(-50:1*0.2)
    --++(220:1*0.2);
   \path (40:1cm) coordinate (A1);
  \path (40:2cm) coordinate (A2);
  \path (40:3cm) coordinate (A3);
  \path (40:4cm) coordinate (A4);
  \path (130:1cm) coordinate (B1);
  \path (130:2cm) coordinate (B2);
  \path (130:3cm) coordinate (B3);
  \path (130:4cm) coordinate (B4);
  \path (A1) ++(130:3cm) coordinate (C1);
  \path (A2) ++(130:2cm) coordinate (C2);
  \path (A3) ++(130:1cm) coordinate (C3);
  \foreach \i in {1,...,19}
  {
    \path (origin)++(40:0.2*\i cm)  coordinate (a\i);
    \path (origin)++(130:0.2*\i cm)  coordinate (b\i);
    \path (a\i)++(130:4cm) coordinate (ca\i);
    \path (b\i)++(40:4cm) coordinate (cb\i);
    \draw[thin,gray] (a\i) -- (ca\i)  (b\i) -- (cb\i); } 
  }  \end{scope}

  \path (origin)++(40:0.2*0.4 cm)++(-50:0.2*0.4 cm)++(-90:0.2cm)
 coordinate (XXXX);
 \path (XXXX)++(90:1.4cm) coordinate (XXXXY);
 
   \draw[<->,thick,black] 
    (XXXX)
    --
    (XXXXY);
\end{tikzpicture}
\quad
\begin{tikzpicture}[scale=1.5]
\begin{scope}
{   \path (0,-1.3)++(40:0.2*0.5)++(130:0.2*0.5) coordinate (origin);  
   \draw[wei2]  (0,-1.3)++(40:0.2*0.5)++(130:0.2*0.5)  circle (2pt);
  \draw[thick] (origin) 
   --++(130:3*0.2)
   --++(40:3*0.2)
  --++(-50:2*0.2)	
  --++(40:3*0.2)
  --++(-50:1*0.2)
      --++(220:6*0.2);
     \clip (origin) 
     --++(130:3*0.2)
   --++(40:3*0.2)
  --++(-50:2*0.2)	
  --++(40:3*0.2)
  --++(-50:1*0.2)
      --++(220:6*0.2);     \draw[fill,cyan!40] (origin) 
    --++(130:1*0.2)
    --++(40:1*0.2)
   --++(130:1*0.2)
    --++(40:1*0.2)
 --++(130:1*0.2)
    --++(40:1*0.2)
--++(-50:1*0.2)
    --++(220:1*0.2)--++(-50:1*0.2)
    --++(220:1*0.2)--++(-50:1*0.2)
    --++(220:1*0.2);

   \path (40:1cm) coordinate (A1);
  \path (40:2cm) coordinate (A2);
  \path (40:3cm) coordinate (A3);
  \path (40:4cm) coordinate (A4);
  \path (130:1cm) coordinate (B1);
  \path (130:2cm) coordinate (B2);
  \path (130:3cm) coordinate (B3);
  \path (130:4cm) coordinate (B4);
  \path (A1) ++(130:3cm) coordinate (C1);
  \path (A2) ++(130:2cm) coordinate (C2);
  \path (A3) ++(130:1cm) coordinate (C3);
  \foreach \i in {1,...,19}
  {
    \path (origin)++(40:0.2*\i cm)  coordinate (a\i);
    \path (origin)++(130:0.2*\i cm)  coordinate (b\i);
    \path (a\i)++(130:4cm) coordinate (ca\i);
    \path (b\i)++(40:4cm) coordinate (cb\i);
    \draw[thin,gray] (a\i) -- (ca\i)  (b\i) -- (cb\i); } 
  }  \end{scope}

  \path (origin)++(40:0.2*0.4 cm)++(-50:0.2*0.4 cm)++(-90:0.2cm)
 coordinate (XXXX);
 \path (XXXX)++(90:1.4cm) coordinate (XXXXY);
 
   \draw[<->,thick,black] 
    (XXXX)
    --
    (XXXXY);
\end{tikzpicture}
\caption{Let $(\la,\mu)$ be the pair of partitions admitting a diagonal cut as in \cref{anex}. We picture the resulting partitions above. The leftmost diagram depicts  $\la^L,\mu^L$, and the rightmost diagram depicts $\la^R,\mu^R$.}
\label{cuteg}
\end{figure}
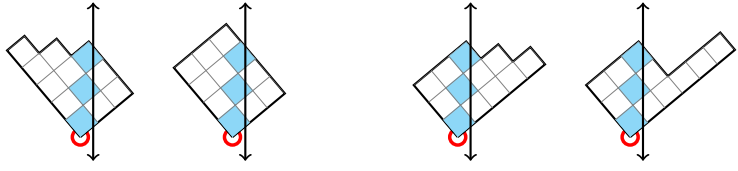
\end{eg}

\subsection{A subquotient of the diagrammatic Cherednik algebra}\label{sdafjhklfsadhjlkafs23}
We now define the subquotients of the diagrammatic Cherednik algebras in which we shall be interested for the remainder of the paper.
Fix $n\in \mathbb{N}$, $\theta\in \ZZ^\ell$, and $\kappa\in (\ZZ/e\ZZ)^\ell$.
We now choose  $\la\in \mptn \ell n$, $a\in \RR$ and we  let $E:= E(\Lambda_a(\la))$ and $F:= F(\Lambda_a(\la))$.
As in \cref{subquot}, we let
\[
e = \sum_{\mu \in E \cap F} 1_\mu \quad \text{and} \quad
f = \sum_{\mu \in F\setminus E} 1_\mu
\]
and we define
\[
A_{\Lambda_a}  =  e(A/(A f A))e
\]
for $A:=A(n,\theta,\kappa)$.
By \cref{3.6rmk} this algebra is a graded cellular algebra with basis
\[
\{C_{\SSTS \SSTT}\mid \SSTS \in \SStd(\alpha,\beta), \, \SSTT\in \SStd(\alpha,\gamma), \, \alpha, \beta, \gamma \in 
\Lambda_a (\la)
\}.
\]
If $R$ is a field, then $A_{\Lambda_a} $ is quasi-hereditary.

\begin{defn}
Let $\mu, \nu \in \Lambda_a(\la)$.
For $\SSTS \in \SStd(\mu,\nu)$, we define $\SSTS^L$ and $\SSTS^R$ to be the tableaux determined by
\begin{enumerate}
[leftmargin=*,itemsep=0.5em]\item $\SSTS^L(r,c,k) = \SSTS(r,c,k)$ for $\mathbf{I}_{(r,c,k)}^\theta < a$ (respectively $\SSTS^R(r,c,k) =\SSTS(r,c,k)$ for $\mathbf{I}_{(r,c,k)}^\theta > a$);
\item $\SSTS^L(r,c,k) =  \mathbf{I}_{(r,c,k)}^\theta$ for  $\mathbf{I}_{(r,c,k)}^\theta > a$ (respectively  $\SSTS^R(r,c,k) =  \mathbf{I}_{(r,c,k)}^\theta$ for  $\mathbf{I}_{(r,c,k)}^\theta < a$);
\end{enumerate}
for $(r,c,k) \in [\mu^L]_\theta$ (respectively $(r,c,k) \in [\mu^R]_\theta$).
\end{defn}

\begin{thm}\label{vsisom}
Given $\la\in \mptn \ell n$ and $a \in \RR$.
We have an isomorphism
\[
A_{\Lambda_a(\la)} \cong A_{\Lambda^L_a(\la^L)} \otimes_{\Bbbk} A_{ \Lambda^R_a(\la^R)}
\]
of graded $R$-modules given by the map
\[
\varphi: C_{\SSTS \SSTT} \longmapsto  C_{\SSTS^L \SSTT^L} \otimes C_{\SSTS^R \SSTT^R}.  
\]
\end{thm}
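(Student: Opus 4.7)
The plan is to establish the isomorphism $\varphi$ in three stages: verify that it is a bijection between cellular bases, check that it preserves the grading, and then check multiplicativity via a diagrammatic argument. The structural fact underlying everything is that the diagonal cut geometrically separates the strands of any $\theta$-diagram for a shape in $\Lambda_a$ into a left half and a right half whose only common elements are the vertical $I_a$-strands (forced to be vertical because $I_a(\mu) = I_a(\la)$ for every $\mu \in \Lambda_a$) together with the filler strands common to all shapes in $\Lambda_a^L$ (respectively $\Lambda_a^R$).

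For the bijection on bases, I would start from Proposition \ref{indexbij} at the level of shapes and extend it to tableaux by the map $\SSTS \mapsto (\SSTS^L,\SSTS^R)$, whose inverse glues a pair of tableaux using the observation that the values on the common overlap (the $I_a$-diagonal and the filler rectangle) are forced by the shape and weight data. Semistandardness is a local condition (only comparing adjacent nodes in a component) and so transfers across the cut; one checks that for nodes in the filler region the identity assignment is automatically semistandard because all shapes in $\Lambda_a^L$ share the same filler strands at the same positions, and similarly on the right. Degree preservation is then immediate, since the diagrammatic degree of $C_{\SSTS\SSTT}$ is a sum of local contributions (dots, crossings, ghost interactions, red-strand interactions) and each such contribution lies either strictly to the left of $a$ or strictly to the right of $a-\ell$.

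The heart of the proof is multiplicativity. The key geometric observation is that in any $\theta$-diagram representing a basis element indexed by $\Lambda_a$, the solid strands split into three groups: $L_a$-strands at $x<a-\ell$ (with ghosts at $x<a-2\ell$), vertical $I_a$-strands at $x\in(a-\ell,a)$ (with ghosts at $x\in(a-2\ell,a-\ell)$), and $R_a$-strands at $x>a$ (with ghosts at $x>a-\ell$). Crucially, no $L_a$-solid or $L_a$-ghost can reach the $R_a$-region and vice versa: they are separated by the strip of width $\ell$, which is precisely the critical distance controlling ghost-solid interactions. The only cross-region interactions are $I_a$-ghosts meeting $L_a$-solids (captured entirely inside $A_{\Lambda_a^L}$) and $R_a$-ghosts meeting $I_a$-solids (captured entirely inside $A_{\Lambda_a^R}$). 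Thus every basis diagram factors into a left and a right half that agree on the shared vertical $I_a$-backbone, and stacking two diagrams (which is the cellular multiplication) reduces to stacking the two halves independently, matching the product in the tensor algebra.

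The main obstacle will be verifying that the diagrammatic reductions arising when one computes the product using relations \ref{rel1}--\ref{rel15} never force an intermediate loading to escape $\Lambda_a$; this is what guarantees that the multiplication really descends to the subquotient $A_{\Lambda_a}$ and corresponds, side by side, to the multiplication in $A_{\Lambda_a^L}\otimes A_{\Lambda_a^R}$. This is controlled by \cref{215}, which implies that any loading appearing in a horizontal slice of a diagram is dominated by the loading of some multipartition, together with the saturated/cosaturated setup of \cref{3.6rmk} that kills the ``outside'' basis elements. One then verifies that applying any local relation respects the left/right split (in particular, \ref{rel5}--\ref{rel10} have their solid--ghost interactions confined to one side by the width-$\ell$ geometry, and the red-strand relations \ref{rel11}--\ref{rel14} act componentwise). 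Once this is in place, the match of products on basis elements is a routine comparison of the two reduced diagrams, completing the proof.
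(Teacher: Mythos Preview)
You have overshot the target: the statement of \cref{vsisom} asserts only an isomorphism of \emph{graded $R$-modules}, not of algebras. Multiplicativity is the content of the subsequent \cref{algisom}, and the paper separates the two deliberately. So your ``heart of the proof'' (your third stage) is not part of this theorem at all.

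For what \emph{is} required --- the bijection on cellular bases and the degree preservation --- your outline is essentially the paper's argument. The paper observes that for $\mu,\nu\in\Lambda_a$ the loadings have the same cardinality in each of the three intervals $(-\infty,a-\ell)$, $(a-\ell,a)$, $(a,\infty)$, so by the semistandard condition any $\SSTS\in\SStd(\mu,\nu)$ must send each node to a point in the same interval; this makes $\SSTS\mapsto(\SSTS^L,\SSTS^R)$ a bijection $\SStd(\mu,\nu)\to\SStd(\mu^L,\nu^L)\times\SStd(\mu^R,\nu^R)$. For degrees, the paper notes that $C_{\SSTS^L}$ agrees with $C_\SSTS$ on $(-\infty,a-\ell)\times[0,1]$ and is the identity (hence degree~$0$) on $(a-\ell,\infty)\times[0,1]$, and symmetrically for $C_{\SSTS^R}$; summing gives $\deg(C_{\SSTS^L})+\deg(C_{\SSTS^R})=\deg(C_\SSTS)$. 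Your version of the degree argument (``each local contribution lies to the left of $a$ or to the right of $a-\ell$'') is the same idea but stated a bit loosely, since those two half-lines overlap; the paper's formulation in terms of restricting the diagram to each region and comparing with the identity is cleaner.

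As a side remark, the multiplicativity sketch you give is in the right spirit for \cref{algisom}, but the paper's actual argument there is more operational: it pushes strands leftwards region by region using relations \ref{rel1}--\ref{rel15} and uses the cosaturated quotient (via \cref{215}) to kill any diagram whose loading escapes $\Lambda_a$, rather than arguing directly from a static geometric separation of strands.
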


\begin{proof}
Recall from \cref{indexbij} that the map $\Lambda_a(\la) \cong \Lambda^L_a(\la^L) \times \Lambda^R_a(\la^R)$ given by $\mu \mapsto \mu^L \times \mu^R$, is bijective.
Given $\mu,\nu \in \Lambda_a(\la)$, we have that
\begin{align*}
|\mathbf{I}_\mu^\theta \cap (-\infty, a-\ell)| &= |\mathbf{I}_\nu^\theta \cap (-\infty, a-\ell)|,\\
|\mathbf{I}_\mu^\theta \cap (a-\ell, a)| &= |\mathbf{I}_\nu^\theta \cap (a-\ell, a)|,\\
|\mathbf{I}_\mu^\theta \cap (a, \infty)| &= |\mathbf{I}_\nu^\theta \cap (a, \infty)|,
\end{align*}
and therefore (by \cref{hjkljkhjklh}) any tableau $\SSTS\in \SStd(\mu,\nu)$ satisfies
\[
\SSTS(r,c,k) =\mathbf{I}^\theta_{(r',c',k')},
\]
for $ {(r,c,k)} $ and ${(r',c',k')} $ a pair of boxes whose loadings belong the same region: $(-\infty, a-\ell)$, $(a-\ell, a)$, or $(a-\ell, \infty)$ respectively.
Therefore $ \SSTS^L \in \SStd(\mu^L,\nu^L)$, $\SSTS^R \in \SStd(\mu^R,\nu^R)$, and the map $\SSTS \mapsto \SSTS^L \times \SSTS^R$  is a bijection $\SStd(\mu,\nu) \rightarrow \SStd(\mu^L,\nu^L) \times \SStd(\mu^R,\nu^R)$.
Moreover, we have that
\begin{equation}\label{sdfafdsa}
\begin{array}{c}
C_{\SSTS^L} \cap (-\infty, a-\ell) \times [0,1] = 
C_\SSTS \cap (-\infty, a-\ell) \times [0,1]
\\
C_{\SSTS^R} \cap (a, \infty) \times [0,1] = 
C_\SSTS  \cap (a, \infty) \times [0,1].
\end{array}
\end{equation}
Given any two multipartitions $\mu^L,\nu^L \in \Lambda^L_a(n_L,\theta,\kappa)$ (or $\mu^R,\nu^R \in \Lambda^R_a(n_R, \theta, \kappa)$) all nodes to the right of the point $a-\ell$ (respectively to the left of $a$) are common to both multipartitions.
Therefore by the definition of semistandard tableaux, 
we have that
\[
\SSTU(r,c,k) = \mathbf{I}_{(r,c,k)}^\theta \qquad \text{and} \qquad \SSTV(r',c',k') =  \mathbf{I}_{(r',c',k')}^\theta
\]
for $\mathbf{I}_{(r,c,k)}^\theta > a - \ell$, $\mathbf{I}_{(r,c,k)}^\theta < a$
 and  $\SSTU \in \SStd(\mu^L,\nu^L)$,  $\SSTV  \in \SStd(\mu^R, \nu^R)$.
Therefore, we have that
\begin{equation}\label{sdfafdsa2}
\begin{array}{r}
C_{\SSTS^L} \cap ((a-\ell, \infty) \times [0,1] ) = 
1_\mu  \cap ((a-\ell, \infty) \times [0,1])  = 1_\nu \cap ((a-\ell,\infty) \times [0,1])
\\
C_{\SSTS^R} \cap ((-\infty,a) \times [0,1]  )= 
1_\mu  \cap ( (-\infty,a)  \times [0,1])  = 1_\nu  \cap ( (-\infty,a) \times [0,1])
\end{array}
\end{equation}
and the diagrams in \cref{sdfafdsa2} are both of degree zero.
Therefore by \cref{sdfafdsa,sdfafdsa2} we have that
\[
\deg(C_{\SSTS^L})
+
\deg(C_{\SSTS^R})
=
\deg(C_\SSTS  \cap (-\infty, a-\ell) \times [0,1])
+
\deg(C_\SSTS  \cap (a, \infty) \times [0,1])
=
\deg(C_\SSTS)
\]
and the map $C_\SSTS \mapsto C_{\SSTS^L} \times C_{\SSTS^R}$  is a degree preserving bijection on the cell modules.
The result follows.
\end{proof}

\begin{eg}
Recall our running example, with $e=3$, $\theta=0$ and $(\la,\mu)=$ \linebreak $((5,4,3,2,1),(4^3,1^3))$.
There is a unique element $\SSTS \in \SStd(\la,\mu)$ and the corresponding basis element $C_\SSTS$ is depicted in \cref{dsaflfsdajkl}.
This pair admits a $\theta$-diagonal cut at $x=1/2$.
The elements $C_{\SSTS^L}$ and $C_{\SSTS^R}$ are depicted in \cref{dsaflfsdajkl2}.
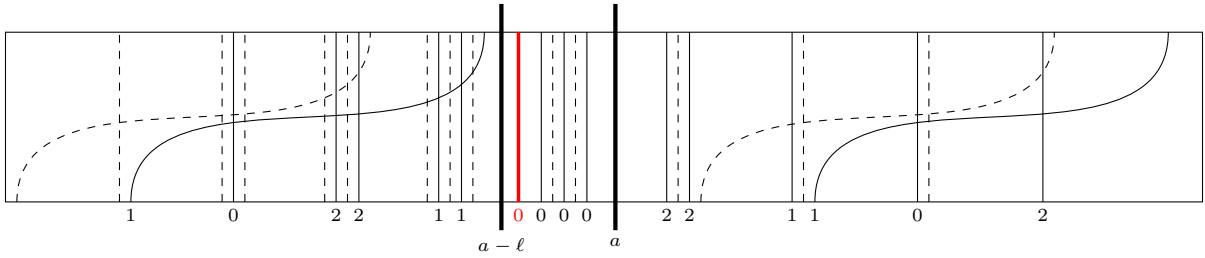
\begin{figure}[ht!]
\[
\scalefont{0.6}
    \begin{tikzpicture}[scale=0.75] 
  \draw (-9,0) rectangle (12,3);
       \draw[ultra thick](1.7,-0.5)--(1.7,3.5);
    \node [below] at (1.7,-0.5) {\tiny $a$};
       \draw[ultra thick](1.7-2,-0.5)--(1.7-2,3.5);
           \node [below] at (1.7-2,-0.5) {\tiny $a-\ell$};
     
 \draw(-6.8,0)  to [out=90,in=-90] (-2+1.4,3);
    \node [below] at (-6.8,0) {\tiny $1$};
 \draw[dashed] (-6.8-2,0)  to [out=90,in=-90] (-2+1.4-2,3);

 \draw(-5,0)--(-5,3);
    \node [below] at (-5,0) {\tiny $0$};
     \draw[dashed](-5-2,0)--(-5-2,3);

 \draw(-3.2,0)--(-3.2,3) (-2.8,0)--(-2.8,3);
    \node [below] at (-3.2,0) {\tiny $2$};
      \node [below] at (-2.8,0) {\tiny $2$};
       \draw[dashed](-3.2-2,0)--(-3.2-2,3) (-2.8-2,0)--(-2.8-2,3);
 \draw(-1.4,0)--(-1.4,3) (-1,0)--(-1,3);
   \node [below] at (-1.4,0) {\tiny $1$};
      \node [below] at (-1,0) {\tiny $1$};
 \draw[dashed] (-1.4-2,0)--(-1.4-2,3) (-1-2,0)--(-1-2,3);

   \draw[wei2] (0,0)--(0,3);
   \draw(0.4,0)--(0.4,3) (0.8,0)--(0.8,3) (1.2,0)--(1.2,3);
    \node [wei2,below] at (0,0) {\tiny $0$};
        \node [below] at (0.8,0) {\tiny $0$}; \node [below] at (0.4,0) {\tiny $0$};
         \node [below] at (1.2,0) {\tiny $0$};
   \draw[dashed](0.4-2,0)--(0.4-2,3) (0.8-2,0)--(0.8-2,3) (1.2-2,0)--(1.2-2,3);

\draw (2.6,0)--(2.6,3)         (3,0)--(3,3);
        \node [below] at (2.6,0) {\tiny $2$}; 
                \node [below] at (3,0) {\tiny $2$}; 
\draw[dashed] (2.6-2,0)--(2.6-2,3)         (3-2,0)--(3-2,3);
\draw (4.8,0)--(4.8,3) ;
\draw        (5.2,0)  to [out=90,in=-90] (11.4,3);
        \node [below] at (4.8,0) {\tiny $1$}; 
                \node [below] at (5.2,0) {\tiny $1$}; 
\draw[dashed] (4.8-2,0)--(4.8-2,3) ;
\draw [dashed]       (5.2-2,0)  to [out=90,in=-90] (11.4-2,3);

  \draw (7,0)--(7,3)        ;
        \node [below] at (7,0) {\tiny $0$}; 
  \draw[dashed] (7-2,0)--(7-2,3);
  \draw (9.2,0)--(9.2,3);
        \node [below] at (9.2,0) {\tiny $2$};
  \draw[dashed] (9.2-2,0)--(9.2-2,3);
\end{tikzpicture}
\]
\caption{The basis element $C_\SSTS$ for $(\mu,\nu)=((5,4,3,2,1),(4^3,1^3))$.
Given $a = 1/2$, we mark the lines of the cut, $x = a$ and $x = a-\ell$.}
\label{dsaflfsdajkl}
\end{figure}
 
\begin{figure}[ht!]
\[
\scalefont{0.6}
\begin{tikzpicture}[scale=0.75] 
  \draw (-9,0) rectangle (12,3);
 \draw(-6.8,0)  to [out=90,in=-90] (-2+1.4,3);
    \node [below] at (-6.8,0) {\tiny $1$};
 \draw[dashed] (-6.8-2,0)  to [out=90,in=-90] (-2+1.4-2,3);

 \draw(-5,0)--(-5,3);
    \node [below] at (-5,0) {\tiny $0$};
     \draw[dashed](-5-2,0)--(-5-2,3);

 \draw(-3.2,0)--(-3.2,3) (-2.8,0)--(-2.8,3);
    \node [below] at (-3.2,0) {\tiny $2$};
      \node [below] at (-2.8,0) {\tiny $2$};
       \draw[dashed](-3.2-2,0)--(-3.2-2,3) (-2.8-2,0)--(-2.8-2,3);
 \draw(-1.4,0)--(-1.4,3) (-1,0)--(-1,3);
   \node [below] at (-1.4,0) {\tiny $1$};
      \node [below] at (-1,0) {\tiny $1$};
 \draw[dashed] (-1.4-2,0)--(-1.4-2,3) (-1-2,0)--(-1-2,3);

   \draw[wei2] (0,0)--(0,3);
   \draw(0.4,0)--(0.4,3) (0.8,0)--(0.8,3) (1.2,0)--(1.2,3);
    \node [wei2,below] at (0,0) {\tiny $0$};
        \node [below] at (0.8,0) {\tiny $0$}; \node [below] at (0.4,0) {\tiny $0$};
         \node [below] at (1.2,0) {\tiny $0$};
   \draw[dashed](0.4-2,0)--(0.4-2,3) (0.8-2,0)--(0.8-2,3) (1.2-2,0)--(1.2-2,3);

\draw (2.6,0)--(2.6,3)         (3,0)--(3,3);
        \node [below] at (2.6,0) {\tiny $2$}; 
                \node [below] at (3,0) {\tiny $2$}; 
\draw[dashed] (2.6-2,0)--(2.6-2,3)         (3-2,0)--(3-2,3);
\draw (4.8,0)--(4.8,3) ;
         \node [below] at (4.8,0) {\tiny $1$}; 
 \draw[dashed] (4.8-2,0)--(4.8-2,3);
\end{tikzpicture}
\]
\[
    \scalefont{0.6}
    \begin{tikzpicture}[scale=0.75] 
  \draw (-9,0) rectangle (12,3);
 
 \draw(-3.2,0)--(-3.2,3); 
    \node [below] at (-3.2,0) {\tiny $2$};
       \draw[dashed](-3.2-2,0)--(-3.2-2,3); 
 \draw(-1.4,0)--(-1.4,3) (-1,0)--(-1,3);
   \node [below] at (-1.4,0) {\tiny $1$};
      \node [below] at (-1,0) {\tiny $1$};
 \draw[dashed] (-1.4-2,0)--(-1.4-2,3) (-1-2,0)--(-1-2,3);

   \draw[wei2] (0,0)--(0,3);
   \draw(0.4,0)--(0.4,3) (0.8,0)--(0.8,3) (1.2,0)--(1.2,3);
    \node [wei2,below] at (0,0) {\tiny $0$};
        \node [below] at (0.8,0) {\tiny $0$}; \node [below] at (0.4,0) {\tiny $0$};
         \node [below] at (1.2,0) {\tiny $0$};
   \draw[dashed](0.4-2,0)--(0.4-2,3) (0.8-2,0)--(0.8-2,3) (1.2-2,0)--(1.2-2,3);

\draw (2.6,0)--(2.6,3)         (3,0)--(3,3);
        \node [below] at (2.6,0) {\tiny $2$}; 
                \node [below] at (3,0) {\tiny $2$}; 
\draw[dashed] (2.6-2,0)--(2.6-2,3)         (3-2,0)--(3-2,3);
\draw (4.8,0)--(4.8,3) ;
\draw        (5.2,0)  to [out=90,in=-90] (11.4,3);
        \node [below] at (4.8,0) {\tiny $1$}; 
                \node [below] at (5.2,0) {\tiny $1$}; 
\draw[dashed] (4.8-2,0)--(4.8-2,3) ;
\draw [dashed]       (5.2-2,0)  to [out=90,in=-90] (11.4-2,3);

  \draw (7,0)--(7,3)        ;
        \node [below] at (7,0) {\tiny $0$}; 
  \draw[dashed] (7-2,0)--(7-2,3)        ;
  \draw (9.2,0)--(9.2,3)        ;
        \node [below] at (9.2,0) {\tiny $2$}; 
  \draw[dashed] (9.2-2,0)--(9.2-2,3)        ;
\end{tikzpicture}
\]
\caption{The basis elements $C_{\SSTS^L}$ and  $C_{\SSTS^R}$ corresponding to $(\mu,\nu)=((5,4,3,2,1),(4^3,1^3))$ cut at $x = 1/2$ and the unique $\SSTS\in\SStd(\mu,\nu)$.}\label{dsaflfsdajkl2}
\end{figure}
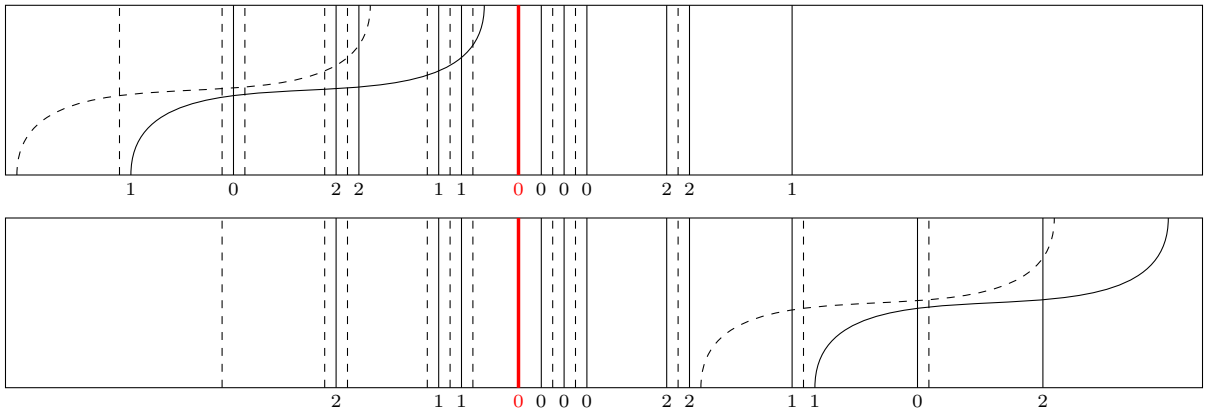
\end{eg}

\begin{thm}\label{algisom}
The map $\varphi: A_{\Lambda_a(\la)} \to  A_{\Lambda^L_a(\la^L)} \otimes_{\Bbbk} A_{\Lambda^R_a(\la^R)}$, defined in \cref{vsisom}, is an isomorphism of graded $R$-algebras.
\end{thm}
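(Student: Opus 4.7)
The plan is to leverage \cref{vsisom}, which already establishes that $\varphi$ is a bijection of graded $R$-modules sending basis elements to tensor products of basis elements. It therefore remains only to check multiplicativity. Since $1_\mu = C_{\SSTT^\mu \SSTT^\mu}$ is sent to $1_{\mu^L} \otimes 1_{\mu^R}$, the bijection of \cref{indexbij} gives
\[
\varphi\bigl(1_{A_{\Lambda_a}}\bigr) = \sum_{\mu \in \Lambda_a} 1_{\mu^L} \otimes 1_{\mu^R} = \Bigl(\sum_{\nu \in \Lambda^L_a} 1_\nu\Bigr) \otimes \Bigl(\sum_{\rho \in \Lambda^R_a} 1_\rho\Bigr),
\]
so the task reduces to showing
\[
\varphi(C_{\SSTS\SSTT} \cdot C_{\SSTU\SSTV}) = \varphi(C_{\SSTS\SSTT}) \cdot \varphi(C_{\SSTU\SSTV})
\]
for arbitrary basis elements (both sides vanish unless the boundaries match, and this condition is equivalent on both sides by \cref{indexbij}).

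The geometric input is essentially recorded in the proof of \cref{vsisom}: every basis diagram $C_{\SSTS\SSTT}$ supported on multipartitions in $\Lambda_a$ has the property that its restriction to the vertical strip $(a-\ell,a)\times[0,1]$ is a union of vertical identity strands (the common strands corresponding to the fixed middle loading $I_a(\la)$). Consequently, the diagram decomposes cleanly into a \emph{left piece} supported in $(-\infty,a)\times[0,1]$ and a \emph{right piece} supported in $(a-\ell,\infty)\times[0,1]$, glued along the middle vertical strands. By construction, $C_{\SSTS^L\SSTT^L}$ and $C_{\SSTS^R\SSTT^R}$ are precisely these left and right pieces, each completed with identity strands on the complementary side.

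Vertical stacking of $C_{\SSTS\SSTT}$ and $C_{\SSTU\SSTV}$ (when the borders match) preserves this structure: the composed middle strip again consists solely of vertical identity strands. We then reduce the stacked diagram to a sum of basis elements using the local relations \ref{rel1}--\ref{rel15}. Because the middle strip is trivial, every applicable local move is confined either to $(-\infty,a)\times[0,1]$ or to $(a-\ell,\infty)\times[0,1]$: a solid strand from the left region cannot reach the right region (or vice versa) since the vertical middle strands act as a barrier. Therefore the reductions in the two half-strips proceed independently, and the left-hand reductions match exactly those arising from the product $C_{\SSTS^L\SSTT^L}\cdot C_{\SSTU^L\SSTV^L}$ computed in $A_{\Lambda^L_a(n_L,\theta,\kappa)}$, and likewise on the right. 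Applying $\varphi$ term by term then yields the tensor product of the two separate reductions, which is exactly $\varphi(C_{\SSTS\SSTT})\cdot\varphi(C_{\SSTU\SSTV})$.

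The main obstacle is verifying that ghost strands do not break this separation. A ghost of a middle solid strand lies in $(a-2\ell,a-\ell)$, which is inside the `left' region, and the ghost of a right solid strand whose solid counterpart sits near $(a-\ell,a)$ can reach into the middle strip. However, since the corresponding solid strands are vertical identity strands, their ghosts are also vertical; any instance of the relations \ref{rel5}--\ref{rel10} involving such a ghost either is a trivial isotopy or is itself already being used in the independent reduction inside $A_{\Lambda^L_a}$ or $A_{\Lambda^R_a}$. An analogous observation handles the red strands lying in the middle region: no solid strand of the opposite half ever enters that region. Once this locality of relations is verified, multiplicativity of $\varphi$ follows, and combined with \cref{vsisom} this completes the proof that $\varphi$ is an isomorphism of graded $R$-algebras.
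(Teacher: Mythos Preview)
Your overall strategy is the same as the paper's: use \cref{vsisom} for the graded $R$-module isomorphism and then check multiplicativity by reducing the concatenated diagram $C_{\SSTS\SSTT}C_{\SSTU\SSTV}$ to the cellular basis via the local relations, matching the reduction region-by-region with the corresponding reductions of $C_{\SSTS^L\SSTT^L}C_{\SSTU^L\SSTV^L}$ and $C_{\SSTS^R\SSTT^R}C_{\SSTU^R\SSTV^R}$. Where you diverge from the paper is in the justification of why the left and right reductions are genuinely independent, and this is where there is a real gap.

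Your claim that ``a solid strand from the left region cannot reach the right region (or vice versa) since the vertical middle strands act as a barrier'' is not correct as stated. The relations \ref{rel1}--\ref{rel14} freely allow strands to cross one another (with error terms), so the vertical strands in $(a-\ell,a)$ do not in any literal sense block a strand from being pulled through them. In the process of rewriting $C_{\SSTS\SSTT}C_{\SSTU\SSTV}$ in the cellular basis (by pulling strands leftwards), one will in general produce diagrams $d$ in which a strand originating to the right of $x=a$ has been pulled across into $(a-\ell,a)$, or a strand from the middle has been pulled past $x=a-\ell$. Such a $d$ has no counterpart in either $C_{\SSTS^L\SSTT^L}C_{\SSTU^L\SSTV^L}$ or $C_{\SSTS^R\SSTT^R}C_{\SSTU^R\SSTV^R}$, so your ``independence'' claim breaks down unless you show these extra terms vanish.

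The paper handles exactly this point, and it is the heart of the argument: such a diagram $d$ factors through a loading $\mathbf{i}$ with $|L_a(\mathbf{i})|>|L_a(\la)|$ (or $|L_a(\mathbf{i})|+|I_a(\mathbf{i})|>|L_a(\la)|+|I_a(\la)|$), and by \cref{215} it therefore factors through some $\mathbf{i}_\nu$ with $\nu\notin E\cap F$; hence $d=0$ in the subquotient $A_{\Lambda_a}$. The mirror argument kills the analogous terms in $A_{\Lambda^L_a}$ and $A_{\Lambda^R_a}$. You never invoke \cref{215} or the cosaturated quotient relation, so as written your proof does not explain why these cross-region terms disappear. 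Replacing the ``barrier'' heuristic with this vanishing argument would close the gap and bring your proof in line with the paper's.
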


\begin{proof}
We have already seen in \cref{vsisom} that the map $\varphi$ is an isomorphism of graded $R$-modules; it remains to check that
\[
C_{\SSTS^L\SSTT^L}C_{\SSTU^L\SSTV^L} \otimes C_{\SSTS^R\SSTT^R}C_{\SSTU^R\SSTV^R} =\varphi(C_{\SSTS\SSTT}) \varphi(C_{\SSTU\SSTV}) =\varphi(C_{\SSTS\SSTT}C_{\SSTU\SSTV}).
\]
The cell-ideals of $A(n,\theta,\kappa)$ are ordered according to the $\theta$-dominance order on loadings; more dominant loadings in this ordering are given by moving strands to the left.
Therefore in order to rewrite a product in terms of the cellular basis, we shall proceed by pulling the strands in the diagrams as far to the left as possible using relations \ref{rel1} to \ref{rel15}.

The relations of $A(n,\theta,\kappa)$ (and therefore those of the subquotients $A_{\Lambda_a(\la)}$, $A_{\Lambda^L_a(\la^L)}$, and  $A_{\Lambda^R_a(\la^R)}$) can all be applied to the regions  $(-\infty, a-\ell)$, then $(a-\ell, a)$, and then $(a, \infty)$ to each of the diagrams in turn.
This is because relations \ref{rel1} to \ref{rel14} of $A(m,\theta,\kappa)$ for $m=n, n_L, n_R$ are local, and the additional idempotent relations (relation \ref{rel15} and the cosaturated quotient relation in the $\theta$-dominance order) are all given in terms of loadings, and therefore are compatible with restriction to regions of $\RR$.
Consider a diagram, $D$, in any one of the algebras $A_{\Lambda_a(\la)}$, $A_{\Lambda^L_a(\la^L)}$, and $A_{ \Lambda^R_a(\la^R)}$.
By \cite[Theorem 4.1]{bowman17}, if we can pull an $i$-strand from $D\cap (a,a+\ell)$ to the left of the line $x=a-\ell$ or a strand from $(a+\ell,\infty)$ to the left of the line $x=a$, then the resulting diagram is zero.

First, apply relations \ref{rel1} to \ref{rel15} to the region $(-\infty, a-\ell)$ of the diagrams $C_{\SSTS\SSTT}C_{\SSTU\SSTV}$ and $C_{\SSTS^L\SSTT^L}C_{\SSTU^L\SSTV^L}$ concurrently to push all strands in both
\[
C_{\SSTS\SSTT}C_{\SSTU\SSTV} \cap ((-\infty, a-\ell)\times [0,1])\quad \text{ and } \quad C_{\SSTS^L\SSTT^L}C_{\SSTU^L\SSTV^L} \cap ((-\infty, a-\ell)\times [0,1])
\]
as far to the left as possible.
Next, apply relations \ref{rel1} to \ref{rel15} to the region $(a , \infty)$ of the diagrams $C_{\SSTS\SSTT}C_{\SSTU\SSTV}$ and $C_{\SSTS^R\SSTT^R}C_{\SSTU^R\SSTV^R}$ concurrently to push all strands in both
\[
C_{\SSTS\SSTT}C_{\SSTU\SSTV} \cap ((a ,\infty)\times [0,1])\quad \text{ and } \quad  C_{\SSTS^R\SSTT^R}C_{\SSTU^R\SSTV^R} \cap ((a ,\infty)\times [0,1])
\]
as far to the left as possible.
If every element in the resulting linear combination of diagrams in $A(n,\theta,\kappa)$ is an element of the basis $\mathcal{C}$, then the result follows.
Otherwise, let $d$ be any diagram in the linear combination which is not in $\mathcal{C}$.
By assumption, the diagram $d$ contains a strand which cannot be pushed further left (using relations \ref{rel1} to \ref{rel15}) in either $(a-\ell, a)$ or $(a, \infty)$, but which can be pushed further left in $(-\infty,\infty)$.
In the former (respectively latter) case, pull this strand to the left of $a-\ell$ (respectively $a$);
the resulting diagram factors through an idempotent with loading $\mathbf{I}_\nu^\theta$, for some $\nu$ such that $|L_a(\nu)|> |L_a(\la)|$ (respectively $|I_a(\nu)| + |L_a(\nu)| >|I_a(\la)| + |L_a(\la)|$), and so is zero in $A_{\Lambda_a(\la)}$.  

Now consider the element $\varphi(d) = d' \otimes d'' \in A(n_L,\theta,\kappa) \otimes A(n_R,\theta,\kappa)$.
In the former (respectively latter) case, there is a strand in the diagram $d' \in A(n_L,\theta,\kappa)$ (respectively $d'' \in A(n_R,\theta,\kappa)$) which can be pulled to the left of $a-\ell$ (respectively $a$);
the resulting diagram factors through an idempotent with loading $\mathbf{I}_{\nu}^\theta$, for some $\nu\in \mptn \ell {n_L}$ (respectively for some $\nu\in \mptn \ell {n_R}$) such that $|L_a(\nu)|> |L_a(\la^L)|$ (respectively $|I_a(\nu)| + |L_a(\nu)| >|I_a(\la^R)| + |L_a(\la^R)|$) and so is zero in $A_{\Lambda^L_a(\la^L)}$ (respectively $A_{\Lambda^R_a(\la^R)}$).
The result follows.
\end{proof}

\begin{cor}\label{main}
Suppose $R$ is a field, and let $(\la, \mu)$ be a pair of $\ell$-multipartitions of $n$ such that $\mu \domby_\theta \la$.
If, for some $a \in \RR$, $(\la,\mu)$ admits a $\theta$-diagonal cut at $x=a$, then
\[
d_{\la \mu}(t)= d_{\la^L \mu^L}(t) \times d_{\la^R \mu^R}(t)
\]
and, furthermore,
\[
\Ext^k_{A(n,\theta,\kappa)} (\Delta(\la), \Delta(\mu)) \cong
\bigoplus_{i+j=k}
\Ext^i_{A(n_L,\theta,\kappa)} (\Delta(\la^L), \Delta(\mu^L)) \otimes 
\Ext^j_{A(n_R,\theta,\kappa)} (\Delta(\la^R), \Delta(\mu^R)),
\] 
\[
\Ext^k_{A(n,\theta,\kappa)} (\Delta(\la), L(\mu)) \cong
\bigoplus_{i+j=k}
\Ext^i_{A(n_L,\theta,\kappa)} (\Delta(\la^L), L(\mu^L)) \otimes 
\Ext^j_{A(n_R,\theta,\kappa)} (\Delta(\la^R), L(\mu^R)),
\]   
where $n_L = |\la^L| = |\mu^L|$ and $n_R = |\la^R| = |\mu^R|$.
\end{cor}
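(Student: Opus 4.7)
The plan is to chain together three pieces that have already been built in the paper: Proposition \ref{3.6rmk} (which says passing to the subquotient $A_{E\cap F}$ preserves graded decomposition numbers and $\Ext$-groups between standards and between standards and simples), Theorem \ref{algisom} (which identifies $A_{\Lambda_a(n,\theta,\kappa)}$ with a graded tensor product of the analogous subquotients on the left and right), and a standard Künneth-type calculation for graded (quasi-hereditary) algebras over a field.

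First, since $(\la,\mu)$ admits a $\theta$-diagonal cut at $x=a$, both $\la$ and $\mu$ lie in $\Lambda_a(n,\theta,\kappa) = E\cap F$. Applying Proposition \ref{3.6rmk} to $A = A(n,\theta,\kappa)$ with this $E,F$, the polynomial $d^{A(n,\theta,\kappa)}_{\la\mu}(t)$ and each of the $\Ext^k$-groups appearing on the left-hand side of the corollary are unchanged when we replace $A(n,\theta,\kappa)$ by $A_{\Lambda_a(n,\theta,\kappa)}$. The same proposition, applied in the same way to $A(n_L,\theta,\kappa)$ and $A(n_R,\theta,\kappa)$ with the sets $\Lambda^L_a(n_L,\theta,\kappa)$ and $\Lambda^R_a(n_R,\theta,\kappa)$, lets us replace the factor algebras on the right-hand side by $A_{\Lambda^L_a(n_L,\theta,\kappa)}$ and $A_{\Lambda^R_a(n_R,\theta,\kappa)}$ respectively, so it suffices to verify the factorisation for these subquotient algebras.

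Now invoke Theorem \ref{algisom} to identify
\[
A_{\Lambda_a(n,\theta,\kappa)} \;\cong\; A_{\Lambda^L_a(n_L,\theta,\kappa)} \otimes_{R} A_{\Lambda^R_a(n_R,\theta,\kappa)}
\]
as graded $R$-algebras, via $\varphi$. Because $\varphi$ sends the cellular basis $\{C_{\SSTS\SSTT}\}$ to pure tensors of cellular basis elements and respects the bijections $\mu\leftrightarrow(\mu^L,\mu^R)$ and $\SSTS\leftrightarrow(\SSTS^L,\SSTS^R)$ from \cref{indexbij} and the proof of \cref{vsisom}, the cell filtration is respected, so $\varphi$ identifies the standard module $\Delta(\mu)$ of the left-hand algebra with the external tensor product $\Delta(\mu^L)\boxtimes\Delta(\mu^R)$, and likewise $L(\mu)$ with $L(\mu^L)\boxtimes L(\mu^R)$ (using that $R$ is a field, so simple heads of tensor products of standards are tensor products of simple heads). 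The factorisation of $d_{\la\mu}(t)$ then follows by reading off graded composition multiplicities from the two-variable decomposition, while the $\Ext$ splittings follow from the graded Künneth formula for $\Ext$ over a tensor product of two finite-dimensional graded algebras over a field: one tensors graded projective resolutions of $\Delta(\la^L)$ and $\Delta(\la^R)$ to obtain a graded projective resolution of $\Delta(\la^L)\boxtimes\Delta(\la^R)$ over the tensor algebra, and applies $\Hom$ into $\Delta(\mu^L)\boxtimes\Delta(\mu^R)$ or $L(\mu^L)\boxtimes L(\mu^R)$.

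The main technical point is not a deep obstacle but a verification: one must check that under $\varphi$ the standard and simple modules transport to external tensor products, i.e.\ that $\varphi$ identifies $\Delta^{A_{\Lambda_a}}(\mu)$ with $\Delta^{A_{\Lambda^L_a}}(\mu^L)\otimes \Delta^{A_{\Lambda^R_a}}(\mu^R)$ and analogously for simples. This follows from the degree-preserving bijection on semistandard tableaux $\SStd(\mu,\nu)\cong\SStd(\mu^L,\nu^L)\times\SStd(\mu^R,\nu^R)$ established in the proof of \cref{vsisom}, together with the fact that the cell filtration on each side of $\varphi$ is controlled by the $\theta$-dominance order and its restriction to the two sides of the cut. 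Once this identification is in hand, the Künneth formula and the multiplicativity of graded decomposition numbers over tensor products of graded cellular algebras over a field are standard, and the corollary follows.
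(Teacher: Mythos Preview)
Your proposal is correct and follows exactly the route the paper takes: the paper's own proof is the single sentence ``This follows by \cref{3.6rmk,algisom},'' and you have simply unpacked what that sentence entails, including the Künneth-type argument and the identification of standards and simples under the tensor product isomorphism that the paper leaves implicit.
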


\begin{proof}
This follows by \cref{3.6rmk,algisom}.
\end{proof}

\begin{rem}
As previously noted, in the level 1 case,  a pair $(\la,\mu)$ such that $\la \trianglerighteq \mu$ admits a $\theta$-diagonal cut at $x=a$ if and only if it admits a horizontal cut, if and only if it admits a vertical cut.  These horizontal and vertical cuts cross at the `highest' node in the diagonal ${\bf I}_\la^\theta \cap (a-\ell, a)$; that is, the node $(r,c)$ satisfying Equation~\ref{cutineq} for which $r+c$ is maximal.
In this case, the corresponding horizontal and vertical cuts are after the $r$th row and the $c$th column, respectively.
In this case, the ungraded version of the result above is a corollary of \cite[4.2(9)]{Donkin} and \cite[Section 10]{donkinhandbook}.
(We remark that the graded case is new, although the result for graded decomposition numbers was already known over $\CC$ by calculations in the  Fock space -- see \cite{cmt02}).

For (graded) decomposition numbers and higher extension groups of cyclotomic $q$-Schur algebras (in other words, the algebras $A(n,\theta,\kappa)$ for $\theta$ a well-separated weighting) the result above is completely new.
In this case it is easy to see that a pair of multipartitions $(\la,\mu)$ admits a $\theta$-diagonal cut at $x=a$ if and only if it admits a horizontal cut after the $r$th row  of the $m$th component   (in the sense of \cite{fs16}), where $(r,c,m)$ is the highest node in $(a-\ell, a)$, as in \cref{explicitpieces}.
Thus we retrieve an analogue of \cite[Theorem 4.8]{fs16} (which is stated for the cyclotomic Hecke algebras) for the cyclotomic $q$-Schur algebras.
Here, our `left piece' is the same as the `top piece' in \cite[Theorem 4.8]{fs16} (modulo empty components which do not play an important role where these decomposition numbers are concerned), while our right piece differs from their `bottom piece' in component $m$; we have an extra $r$ rows of length $c$, which are common to $\la^R$ and $\mu^R$.
In order to remove this `extra rectangle'  it would suffice to establish a `first row removal' isomorphism of algebras relating algebras of different degrees and {\em more importantly} different $e$-multicharges and weightings; this would proceed by analogy with \cite{fs16}.
Given an algebra  with a well-separated weighting, it is not difficult to establish such an isomorphism on the level of graded vector spaces and thus deduce the equality of graded decomposition numbers over fields of characteristic zero (by arguing in an LLT-type fashion as in \cite[Theorem 4.12]{bs15}).
However, the obvious map does not lift to the level of an isomorphism of algebras and so we do not include this here.
\end{rem}

\begin{eg}
Recall our running example, with $e=3$, $\theta=0$, $\la=(5,4,3,2,1)$, and $\mu=(4^3,1^3)$.
We have that 
\[
d_{(5,4,3,2,1)(4^3,1^3)}(t) = d_{(5,4,3)(4^3)}(t)d_{(3^3,2,1)(3^3,1^3)}(t) = t \times t= t^2.
\]
regardless of the   characteristic of the underlying field $R$.  
Similarly, 
\begin{align*}
	&\Dim{(\Hom_{A(15,(0),(0))}(\Delta(5,4,3,2,1),\Delta(4^3,1^3)))} \\
=	&\Dim{(\Hom_{A(12,(0),(0))}(\Delta(5,4,3),\Delta(4^3)))}  \times 
		\Dim{(\Hom_{A(12,(0),(0))}(\Delta(3^3,2,1), \Delta(3^3,1^3)))} \\
= &t \times t = t^2.
\end{align*}
\end{eg}

\begin{eg}
Let $e=5$, $\kappa=(0,2)$, and $\theta=(0,1)$.
The bipartitions
\[
\la = ((11,9,7,3^2,2,1^3),(9,4,2,1^4)) \quad \text{and} \quad \mu = ((10,9,8,4,3,1^5), (8,4,2,1^4))
\] 
admit a diagonal cut at $x = 5.2$, which yields
\begin{alignat*}2
\la^L &= ((11,9,7,3^2), (9,4,2)),
&\qquad
\la^R &= ((3^5,2,1^3),(1^7)),\\
\mu^L &= ((10,9,8,4,3),(8,4,2)),
&\qquad
\mu^R &= ((3^5,1^5),(1^7)).
\end{alignat*}

These bipartitions are depicted in \cref{level1,level2} in the introduction.
Applying \cref{main}, we have
\[
d_{\la\mu}(t) = d_{\la^L \mu^L}(t) \times d_{\la^R \mu^R}(t).
\]
First of all, we note that   \cite[Theorem 4.30]{bs15} is insufficient to calculate $d_{\la\mu} (t)$; this is because   $\la$ and $\mu$ differ in nodes of adjacent residues ($0$ and $1$).
However, \cite[Theorem 4.30]{bs15} is sufficient to calculate both $d_{\la^L\mu^L}(t)$ and  $d_{\la^R\mu^R}(t)$, because $\la^L$ and $\mu^L$ (respectively $\la^R$ and $\mu^R$) differ only in nodes of a single residue, namely 0 (respectively 1).
Applying \cite[Theorem 4.30]{bs15}, we deduce that
\[
d_{\la\mu}(t) = (t^5 + t^3) \times t^2 = t^7 + t^5.
\]
\end{eg}

\begin{eg}\label{cylotomic}
Let $e=3$, $\kappa = (0,1)$, and $\theta =(0,115)$ a well-separated weighting. The bipartitions $\la = ((5^2,4^2,3,2,1),(9,6,4^2,3,2^3,1))$ and $\mu = ((5,4^2,3^3),(9,6,5,4^2,2^2,1^3))$ of $57$ admit a $\theta$-diagonal cut 
at $x= 121$. This cut results in
\begin{alignat*}2
\la^L &= ((5^2,4^2,3,2,1),(9,6,4^2,3)),
&\qquad
\la^R &= (\varnothing,(3^5,2^3,1)),\\
\mu^L &= ((5,4^2,3^3),(9,6,5,4^2)),
&\qquad
\mu^R &= (\varnothing,(3^5,2^2,1^3)).
\end{alignat*}
This reduction again yields multipartitions amenable to the techniques of \cite{bs15} (whereas $\la$ and $\mu$ are not). 
The left-hand pieces should be compared with \cite[Example 2.6]{bs15}, which yields $d_{\la^L \mu^L} = t^{11} + 2 t^9 + 2 t^7 + t^5$. We may also apply \cite[Theorem 4.30]{bs15} to calculate that $d_{\la^R \mu^R} = t$. Thus, we have $d_{\la \mu} = t^{12} + 2 t^{10} + 2 t^8 + t^6$.
\end{eg}

\providecommand{\bysame}{\leavevmode\hbox to3em{\hrulefill}\thinspace}
\providecommand{\MR}{\relax\ifhmode\unskip\space\fi MR }
\providecommand{\MRhref}[2]{%
  \href{http://www.ams.org/mathscinet-getitem?mr=#1}{#2}
}
\providecommand{\href}[2]{#2}


\providecommand{\bysame}{\leavevmode\hbox to3em{\hrulefill}\thinspace}
\providecommand{\MR}{\relax\ifhmode\unskip\space\fi MR }
\providecommand{\MRhref}[2]{%
  \href{http://www.ams.org/mathscinet-getitem?mr=#1}{#2}
}
\providecommand{\href}[2]{#2}
\begin{thebibliography}{RSVV16}

\bibitem[BCS17]{bcs15}
C.~Bowman, A.~Cox, and L.~Speyer,
  \emph{\href{http://dx.doi.org/10.1093/imrn/rnw101}{A family of graded
  decomposition numbers for diagrammatic {Cherednik} algebras}}, Int.\ Math.\
  Res.\ Not.\ IMRN \textbf{2017} (2017), no.~9, 2686--2734.

\bibitem[BK09a]{bkisom}
J.~Brundan and A.~Kleshchev, \emph{\href{http://dx.doi.org/10.1007/s00222-009-0204-8}{Blocks of
  cyclotomic {Hecke} algebras and {Khovanov}--{Lauda} algebras}}, Invent.\
  Math. \textbf{178} (2009), no.~3, 451--484.

\bibitem[BK09b]{bk09}
\bysame, \emph{\href{http://dx.doi.org/10.1016/j.aim.2009.06.018}{Graded
  decomposition numbers for cyclotomic {Hecke} algebras}}, Adv.\ Math.
  \textbf{222} (2009), no.~6, 1883--1942.

\bibitem[Bow]{bowman17}
C.~Bowman, \emph{The many graded cellular bases of {Hecke} algebras},
  \href{https://arxiv.org/abs/1702.06579}{arXiv:1702.06579}, 2017, preprint.

\bibitem[BR]{BONNAFEROUQ}
C.~Bonnaf\'e and R.~Rouquier, \emph{{C}ellules de {C}alogero--{M}oser},
  \href{http://arxiv.org/abs/1302.2720}{arXiv:1302.2720v1}, 2013, preprint.

\bibitem[BR13]{BONNAFEROUQ2}
\bysame \emph{\href{https://doi.org/10.2140/pjm.2013.261.45}{Calogero--{Moser} versus
  {K}azhdan--{Lusztig} cells}}, Pacific J.\ Math. \textbf{261} (2013), no.~1,
  45--51. 

\bibitem[BS18]{bs15}
C.~Bowman and L.~Speyer,
  \emph{\href{https://doi.org/10.1090/tran/7054}{Kleshchev's decomposition
  numbers for diagrammatic {Cherednik} algebras}}, Trans.\ Amer.\ Math.\ Soc.
  \textbf{370} (2018), no.~5, 3551--3590.

\bibitem[CGG12]{cgg12}
M.~Chlouveraki, I.~Gordon, and S.~Griffeth,
  \emph{\href{https://doi.org/10.1090/conm/562/11131}{Cell modules and
  canonical basic sets for {Hecke} algebras from {Cherednik} algebras}}, New
  trends in noncommutative algebra, Contemp.\ Math., vol.~562, Amer.\ Math.\
  Soc., Providence, RI, 2012, pp.~77--89.
 
\bibitem[CJ12]{cj12}
M.~Chlouveraki and N.~Jacon, \emph{\href{https://doi.org/10.1007/s10801-011-0314-4}{Schur elements for the {Ariki}--{Koike}
  algebra and applications}}, J.\ Algebraic Combin. \textbf{35} (2012), no.~2,
  291--311. 

\bibitem[CMT02]{cmt02}
J.~Chuang, H.~Miyachi, and K.~M. Tan,
  \emph{\href{http://dx.doi.org/10.1016/S0021-8693(02)00062-5}{Row and column
  removal in the {$q$}-deformed {F}ock space}}, J.\ Algebra \textbf{254}
  (2002), no.~1, 84--91.

\bibitem[DJM98]{djm}
R.~Dipper, G.~D. James, and A.~Mathas,
  \emph{\href{http://dx.doi.org/10.1007/PL00004665}{Cyclotomic {$q$}-{Schur}
  algebras}}, Math.\ Z. \textbf{229} (1998), no.~3, 385--416.

\bibitem[Don85]{donkinnote}
S.~Donkin, \emph{\href{http://dx.doi.org/10.1017/S0305004100062575}{A note on
  decomposition numbers for general linear groups and symmetric groups}},
  Math.\ Proc.\ Cambridge Philos.\ Soc. \textbf{97} (1985), no.~1, 57--62.

\bibitem[Don98]{Donkin}
\bysame, \emph{\href{http://dx.doi.org/10.1017/CBO9780511600708}{The
  {$q$}-{Schur} algebra}}, London Mathematical Society Lecture Note Series,
  vol. 253, Cambridge University Press, Cambridge, 1998.

\bibitem[Don07]{donkinhandbook}
\bysame, \emph{\href{http://dx.doi.org/10.1017/CBO9780511735134.009}{Tilting
  modules for algebraic groups and finite dimensional algebras}}, Handbook of
  tilting theory, London Math.\ Soc.\ Lecture Note Ser., vol. 332, Cambridge
  Univ. Press, Cambridge, 2007, pp.~215--257.

\bibitem[EL]{el17}
B.~Elias and I.~Losev, \emph{Modular representation theory in type {$A$} via
  {Soergel} bimodules},
  \href{https://arxiv.org/abs/1701.00560}{arXiv:1701.00560}, 2017, preprint.
  
\bibitem[FL03]{fl03}
M.~Fayers and S.~Lyle,
  \emph{\href{http://dx.doi.org/10.1016/S0022-4049(03)00099-9}{Row and column
  removal theorems for homomorphisms between {Specht} modules}}, J.\ Pure
  Appl.\ Algebra \textbf{185} (2003), 147--164.

\bibitem[FS16]{fs16}
M.~Fayers and L.~Speyer,
  \emph{\href{https://doi.org/10.1007/s10801-016-0674-x}{Generalised column
  removal for graded homomorphisms between {Specht} modules}}, J.\ Algebraic
  Combin. \textbf{44} (2016), no.~2, 393--432.

\bibitem[Gec98]{Geck98}
M.~Geck, \emph{\href{https://doi.org/10.1090/S1088-4165-98-00042-9}{Kazhdan--{Lusztig} cells and decomposition numbers}}, Represent.\
  Theory \textbf{2} (1998), 264--277 (electronic). 

\bibitem[GJ11]{gj11}
M.~Geck and N.~Jacon,
  \emph{\href{http://dx.doi.org/10.1007/978-0-85729-716-7}{Representations of
  {Hecke} algebras at roots of unity}}, Algebra and Applications, vol.~15,
  Springer-Verlag London, Ltd., London, 2011.

\bibitem[GR01]{gr01}
M.~Geck and R.~Rouquier, \emph{Filtrations on projective modules for
  {Iwahori}--{Hecke} algebras}, Modular representation theory of finite groups
  ({Charlottesville}, {VA}, 1998), de Gruyter, Berlin, 2001, pp.~211--221.


\bibitem[Jac05]{Jacon05}
N.~Jacon, \emph{\href{https://doi.org/10.1090/conm/392/07351}{Canonical basic sets for {Hecke} algebras}},
  Infinite-dimensional aspects of representation theory and applications,
  Contemp.\ Math., vol.~392, Amer.\ Math.\ Soc., Providence, RI, 2005, pp.~33--41.


\bibitem[Jam81]{j81}
G.~D. James, \emph{\href{http://dx.doi.org/10.1016/0021-8693(81)90108-3}{On the
  decomposition matrices of the symmetric groups, {III}}}, J.\ Algebra
  \textbf{71} (1981), no.~1, 115--122.

\bibitem[KK12]{kk12}
S.-J. Kang and M.~Kashiwara,
  \emph{\href{http://dx.doi.org/10.1007/s00222-012-0388-1}{Categorification of
  highest weight modules via {Khovanov}--{Lauda}--{Rouquier} algebras}},
  Invent.\ Math. \textbf{190} (2012), no.~3, 699--742.

\bibitem[LM05]{lm05}
S.~Lyle and A.~Mathas,
  \emph{\href{http://dx.doi.org/10.1007/s10801-005-2511-5}{Row and column
  removal theorems for homomorphisms of {S}pecht modules and {W}eyl modules}},
  J.\ Algebraic Combin. \textbf{22} (2005), 151--179.

\bibitem[Los16]{losev}
I.~Losev, \emph{\href{http://dx.doi.org/10.1007/s00029-015-0209-7}{Proof of
  {Varagnolo--Vasserot} conjecture on cyclotomic categories {$\mathcal{O}$}}},
  Selecta Math. \textbf{22} (2016), no.~2, 631--668.

\bibitem[Mat14]{m14surv}
A.~Mathas, \emph{\href{http://dx.doi.org/10.1142/9789814651813_0005}{Cyclotomic
  quiver {Hecke} algebras of type\ {$A$}}}, Modular representation theory of
  finite and $p$-adic groups, Lect.\ Notes Ser.\ Inst.\ Math.\ Sci.\ Natl.\
  Univ.\ Singap., vol.~30, World Sci.\ Publ., Hackensack, NJ, 2015,
  pp.~165--266.

\bibitem[RSVV16]{RSVV}
R.~Rouquier, P.~Shan, M.~Varagnolo, and E.~Vasserot,
  \emph{\href{http://dx.doi.org/10.1007/s00222-015-0623-7}{Categorifications
  and cyclotomic rational double affine {Hecke} algebras}}, Invent.\ Math.
  \textbf{204} (2016), no.~3, 671--786.

\bibitem[Web]{Webster2}
B.~Webster, \emph{Weighted {Khovanov}--{Lauda}--{Rouquier} algebras},
  \href{http://arxiv.org/abs/1209.2463}{arXiv:1209.2463}, 2012, preprint.

\bibitem[Web17a]{WebsterKnots}
\bysame, \emph{\href{https://doi.org/10.1090/memo/1191}{Knot invariants and
  higher representation theory}}, Mem.\ Amer.\ Math.\ Soc. \textbf{250} (2017),
  no.~1191, 133.

\bibitem[Web17b]{Webster}
\bysame, \emph{\href{https://doi.org/10.1017/fms.2017.17}{Rouquier's conjecture
  and diagrammatic algebra}}, Forum Math.\ Sigma \textbf{5} (2017), e27, 71.

\bibitem[Wil17]{w17}
G.~Williamson, \emph{\href{https://doi.org/10.1090/jams/868}{Schubert calculus
  and torsion explosion}}, J.\ Amer.\ Math.\ Soc. \textbf{30} (2017), no.~4,
  1023--1046, with a joint appendix with A.~Kontorovich and P.~J.~McNamara.

\end{thebibliography}
\end{document}